\documentclass[oneside]{amsart}

\usepackage[letterpaper,body={14.6cm, 22.5cm}, mag=1000]{geometry}
\usepackage{amssymb}
\usepackage{amsthm}
\usepackage{amscd}
\usepackage{lineno}

\numberwithin{equation}{section}
\theoremstyle{plain}
\newtheorem{thm}{Theorem}[section]
\newtheorem{cor}[thm]{Corollary}
\newtheorem{lemma}[thm]{Lemma}

\newtheorem*{thma}{Theorem A}
\newtheorem*{thmb}{Theorem B}

\theoremstyle{definition}

\newtheorem{remark}[thm]{Remark}

\newcommand{\dlabel}[1]{\ifmmode \text{\ttfamily \upshape [#1] } \else
{\ttfamily \upshape [#1] }\fi \label{#1} }

\newcommand{\C}{\operatorname{C} }

\newcommand{\K}{\operatorname{K} }

\newcommand{\Z}{\operatorname{Z} }

\newcommand{\gen}[1]{\left < #1 \right >}

\begin{document}
\baselineskip 15pt

\title{Commutators and commutator subgroups of finite $p$-groups}

\author{Rahul Kaushik}

\address{School of Mathematics, Harish-Chandra Research Institute, HBNI,
Chhatnag Road, Jhunsi, Allahabad - 211 019, INDIA}

\email{rahulkaushik@hri.res.in}

\author{Manoj K.~Yadav}

\address{School of Mathematics, Harish-Chandra Research Institute, HBNI, 
Chhatnag Road, Jhunsi, Allahabad - 211 019, INDIA}

\email{myadav@hri.res.in}

\subjclass[2010]{Primary 20D15, 20F12}
\keywords{commutator, commutator subgroup, $p$-group}

\begin{abstract}
We present a classification of finite $p$-groups $G$  with  $\gamma_2(G)$, the commutator subgroup of $G$, of order  $p^4$ and exponent $p$ such that not all elements of $\gamma_2(G)$ are commutators. 
\end{abstract}
\maketitle

\section{Introduction}

This paper is devoted to the study of finite $p$-groups $G$  with $\gamma_2(G)$ of order $p^4$ and exponent $p$ such that $\K(G) = \gamma_2(G)$, where $\gamma_2(G)$ denotes the commutator subgroup of $G$ and $\K(G) := \{[x, y] \mid x, y \in G\}$. A similar study for finite $p$-groups $G$ with $\gamma_2(G)$ minimally generated by at most $3$ elements has already been carried out in the past. Rodney  \cite{dmR73} proved that $\K(G) = \gamma_2(G)$ for a nilpotent group $G$ with cyclic commutator subgroup. He \cite{dmR77} also proved that $\K(G) = \gamma_2(G)$ for all finite groups $G$ with $\gamma_2(G)$ elementary abelian of order $p^3$ for any prime integer $p$. Guralnick \cite{rmG82} considered finite groups $G$ such that $\gamma_2(G)$ is an abelian $p$-group minimally generated by at most $3$ elements, and proved that $\K(G) = \gamma_2(G)$ for such groups, where $p \ge 5$. In a series of two papers Fernandez-Alcober and Heras \cite{FAH19}, and Heras \cite{iH20}, respectively,  it is proved that $\K(G) = \gamma_2(G)$ for a finite $p$-group $G$ such that $\gamma_2(G)$ is generated by $2$ elements, and for a finite $p$-group $G$, $p \ge 5$, such that $\gamma_2(G)$ is generated by $3$ elements.  A generalization of some of these results for  commutators  of higher weights has also been studied in the literature. The interested reader may refer to \cite{DN78, rmG82a, rmG83, HM20}.

Examples of groups $G$ of order $p^8$ and nilpotency class $2$ such that $\K(G) \neq \gamma_2(G)$ were constructed by Macdonald \cite[Exercise 5, Page 78]{idM68}. Such examples of groups of order $p^6$, $p \ge 5$, having nilpotency class $4$ and groups of order $2^7$ having nilpotency class $3$ were constructed by Kappe and Morse in  \cite{KM05}, where the authors proved that $\K(G) = \gamma_2(G)$ for all $p$-groups of order at most $p^5$ and for all $2$-groups of order at most $2^6$.  Some more details on the topic can be found in \cite{KM07}. So it is evident that a condition only on the prime $p$ for a finite $p$-group $G$ can not ensure that $\K(G) = \gamma_2(G)$.  The counter examples above of order $2^8$ and $p^6$, $p \ge 5$, admit  elementary abelian commutator subgroups of order $2^4$ and $p^4$ respectively. This motivates us to look at the classification of finite $p$-groups $G$  with $\gamma_2(G)$ elementary abelian  of rank $4$ such that $\K(G) \ne \gamma_2(G)$. This study will also reveal patterns for future investigation on the topic. It follows from Lemma \ref{prelemma} that the property under consideration is invariant under isoclinism of groups (see Section 2 for the definition), which is an equivalence relation. As observed in the next section, each isoclinism family admits a group $G$ such that $\Z(G) \le \gamma_2(G)$, where $\Z(G)$ denotes the center of $G$. In the following result we  provide the desired characterization of  finite $p$-groups (upto isoclinism) whose commutator subgroups are of order $p^4$ and exponent $p \ge 3$.

\begin{thma}
Let $G$ be a  finite $p$-group with $\Z(G) \le \gamma_2(G)$ and $\gamma_2(G)$ of order $p^4$ and exponent $p \ge 3$. Then $\K(G) \ne \gamma_2(G)$ if and only if one of the following holds:

{\rm (1)} $G$ is of order $p^6$ and nilpotency class $4$ with $|\Z(G)| = p^2$.

{\rm (2)} $G$ is of order $p^7$ and nilpotency class $3$ with $|\Z(G)| = p^3$.

{\rm (3)} $G$ is of order $p^8$ and nilpotency class $2$ along with one of the following:
\begin{subequations}
\begin{align}
&\text{\hspace{.05in} {\rm (3a)} $G$ admits a non-central element whose centralizer in $G$ is a maximal subgroup.}\nonumber\\
&\text{ \hspace{.0in} {\rm (3b)} $G$ is of conjugate type $\{1, p^2, p^3\}$ and admits no  generating set $\{x_1, x_2, x_3, x_4\}$}\nonumber \\
&\text{\hspace{.35in} such that $[x_1, x_2] = 1 = [x_3, x_4]$.}\nonumber
\end{align}
\end{subequations}
Moreover, if $\K(G) \ne \gamma_2(G)$, then every element of $\gamma_2(G)$ can be written as a product of at most two elements from $\K(G)$.
\end{thma}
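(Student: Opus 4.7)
The plan is to prove the theorem by case analysis on the nilpotency class $c = c(G)$. Since $|\gamma_2(G)| = p^4$, we have $2 \le c \le 5$. First I would eliminate the case $c = 5$: each factor $\gamma_i(G)/\gamma_{i+1}(G)$ for $2 \le i \le 5$ is then forced to have order $p$, so in particular $\gamma_2(G)/\gamma_3(G)$ is cyclic, and a commutator-calculus argument in the spirit of Rodney's theorem yields $\K(G) = \gamma_2(G)$, excluding this case.

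For the remaining classes, I would pin down $|G|$ and $|\Z(G)|$. When $c = 4$, the chain $1 < \gamma_4(G) \le \Z(G) \le \gamma_2(G)$ with nontrivial successive quotients, together with the isoclinism reduction from Lemma~\ref{prelemma}, gives $|G| = p^6$ and $|\Z(G)| = p^2$, which is case (1). When $c = 3$, analogous reasoning yields $|G| = p^7$ and $|\Z(G)| = p^3$, which is case (2). When $c = 2$, we have $\gamma_2(G) = \Z(G)$ of order $p^4$; the induced bilinear commutator map on $G/\Z(G)$ has image generating $\gamma_2(G) \cong \mathbb{F}_p^4$, forcing $|G/\Z(G)| \ge p^4$, and the extremal setting of case (3) yields $|G| = p^8$.

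The heart of the proof is the class-$2$ analysis. Here $\K(G)$ is the image of the bilinear alternating map $\Lambda \colon G/\Z(G) \times G/\Z(G) \to \gamma_2(G)$. If condition (3a) holds, some non-central $x$ has $[G : C_G(x)] = p$, so $|[x,G]| = p$, and this trims the image of $\Lambda$ to a proper subgroup of $\gamma_2(G)$. If (3a) fails and there exists $x$ with $C_G(x) = \Z(G)$, then $[x,G] = \gamma_2(G)$ already, forcing $\K(G) = \gamma_2(G)$; so one may assume the conjugate type is $\{1, p^2, p^3\}$. Under this assumption, the image of $\Lambda$ covers $\gamma_2(G)$ precisely when the alternating form admits a symplectic-type decomposition, equivalently when $G$ has a generating set $\{x_1, x_2, x_3, x_4\}$ with $[x_1, x_2] = [x_3, x_4] = 1$. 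The main obstacle will be rigorously identifying (3a) and (3b) as the exhaustive obstructions in class $2$; I would reduce via isoclinism to a small family of representatives and verify the claim by direct commutator calculation, with the examples of Macdonald and Kappe--Morse confirming sharpness.

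Finally, for the ``product of two commutators'' bound: in each of cases (1), (2), (3), I would show $\K(G)$ contains a subgroup of $\gamma_2(G)$ of index at most $p$ (typically $[x,G]$ for a suitably chosen $x$), and then a coset-counting argument on an index-$p$ subgroup of $\gamma_2(G)$ yields $\K(G) \cdot \K(G) = \gamma_2(G)$.
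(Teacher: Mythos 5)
Your outline breaks down at the ``pin down $|G|$ and $|\Z(G)|$'' step, and this is not a repairable detail but the main content of the theorem. It is simply false that nilpotency class $4$ forces $|G|=p^6$ and $|\Z(G)|=p^2$, or that class $3$ forces $|G|=p^7$ and $|\Z(G)|=p^3$, or that class $2$ forces $|G|=p^8$: there exist class-$4$ stem groups of order $p^6$ with $|\Z(G)|=p$ and of order $p^7$, class-$3$ stem groups of order $p^7$ with $|\Z(G)|\le p^2$ and of order $\ge p^8$ with $|\Z(G)|=p^3$, and class-$2$ stem groups of every order $\ge p^9$ (see the examples in Section 8 and Lemma \ref{cl2lemma2}). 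The theorem asserts that for all of these $\K(G)=\gamma_2(G)$, and proving that is where almost all the work lies: one needs the breadth dichotomy ($b(G)\ge 3$, with $b(G)=4$ trivially giving $\K(G)=\gamma_2(G)$), the Parmeggiani--Stellmacher structure of breadth-$3$ groups, the reduction Theorem \ref{keyresult} expressing large groups as amalgamated/central products over small $2$- or $3$-generator subgroups, and then the case-by-case verifications of Lemmas \ref{p6lem}, \ref{p7lem3}, \ref{p7lem1}, \ref{main1} and \ref{cl2lemma2}. None of this appears in your plan; by treating the numerical data in (1)--(3) as forced by the class rather than as the characterization of when $\K(G)\ne\gamma_2(G)$ within each class, you have silently discarded the cases that carry the burden of proof (including the distinction, at order $p^6$ and class $4$, between $|\Z(G)|=p$, where $\K(G)=\gamma_2(G)$, and $|\Z(G)|=p^2$, where $\alpha_4\gamma$ fails to be a commutator). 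Your class-$5$ elimination is also unsubstantiated as stated; the paper instead shows $b(G)=4$ there.

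The class-$2$ analysis at order $p^8$ is likewise only gestured at. That an element with maximal centralizer ``trims the image'' of the commutator map is not an argument: one must exhibit a specific non-commutator, which the paper does in Lemma \ref{cl2lemma1} by showing $[a,b][c,d]$ leads to an inconsistent system of equations. The equivalence in (3b) is Lemma \ref{cl2lemma1a} and rests on repeated changes of generating set plus a quadratic non-residue obstruction; and the remaining conjugate type $\{1,p^3\}$ needs the Naik--Yadav classification together with a solvability-of-quadratics argument (Lemma \ref{cl2lemma1b}). Your proposal to ``reduce via isoclinism to a small family of representatives and verify by direct computation'' is not available for general $p$: no such finite list exists outside the $\{1,p^3\}$ case. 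Your idea for the ``product of two commutators'' statement (an index-$p$ subgroup of $\gamma_2(G)$ inside $\K(G)$ plus coset counting) is workable in spirit but needs care about powers of commutators being commutators; the paper's route is cleaner: pass to $G/H$ for a suitable central subgroup $H$ of order $p$, apply Rodney's Theorem \ref{dmr77} to get $\gamma_2(G/H)=\K(G/H)$, and conclude by Lemma \ref{prelemma5}.
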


Along with the known results in the literature (as mentioned above), the preceding theorem provides a characterization of finite $p$-groups, $p \ge 5$, with   $\gamma_2(G)$ of order  $p^4$ such that each element of $\gamma_2(G)$ is a commutator. A classification of $p$-groups of order $p^6$, $p \ge 3$, with $\K(G) = \gamma_2(G)$ is evident. 

We now turn our attention to $2$-groups. It is known that $\K(G) = \gamma_2(G)$ for all $2$-groups $G$ of order at most $2^6$  \cite{KM05}. The following theorem provides a characterization (upto isoclinism) of finite $2$-groups $G$ with $\gamma_2(G)$  elementary abelian $2$-group of order $16$ such that $\K(G) \ne \gamma_2(G)$.

\begin{thmb}
Let $G$ be a finite $2$-group such that $\Z(G) \le \gamma_2(G))$ and $\gamma_2(G)$ is elementary abelian of order $16$. Then $\K(G) \ne \gamma_2(G)$  if and only if one of the following holds true:

{\rm (1)}  $G$ is isoclinic to the following special $2$-group of order  $2^{9}$ presented as: 
\begin{eqnarray*}
& & \langle  v_1, v_2, v_3, v_4, v_5 \mid [v_1, v_2] =  [v_2, v_3] =  [v_3, v_1] = [v_4, v_2] = [v_5, v_1] =  1, [v_3, v_4]=[v_3, v_5],\\
& & [x, y, z] = 1 \mbox{ for all } x, y, z \in \{v_1, \ldots, v_5\}, v_i^2 = 1  \;( 1\le i \le 5)\rangle.
\end{eqnarray*}

{\rm (2)} $G$ is of order $p^8$ and nilpotency class $2$ along with one of the following:
\begin{subequations}
\begin{align}
&\text{\hspace{.05in} {\rm (2a)} $G$ admits a non-central element whose centralizer in $G$ is a maximal subgroup.}\nonumber\\
&\text{ \hspace{.0in} {\rm (2b)} $G$ is of conjugate type $\{1, 4, 8\}$ and admits no  generating set $\{x_1, x_2, x_3, x_4\}$}\nonumber \\
&\text{\hspace{.35in} such that $[x_1, x_2] = 1 = [x_3, x_4]$.}\nonumber
\end{align}
\end{subequations}
Moreover, if $\K(G) \ne \gamma_2(G)$, then every element of $\gamma_2(G)$ can be written as a product of at most two elements from $\K(G)$.
\end{thmb}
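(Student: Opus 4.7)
The plan is to parallel Theorem~A while handling the new features forced by $p = 2$. By Lemma~\ref{prelemma} the property $\K(G) \ne \gamma_2(G)$ is isoclinism invariant, so the normalization $\Z(G) \le \gamma_2(G)$ in the hypothesis costs nothing. A crucial preliminary observation is that every group appearing in the conclusion has nilpotency class $2$: the presentation in (1) imposes $[x,y,z] = 1$ on the generators, and the groups in (2) are declared class $2$. Accordingly my first task is to rule out classes $\ge 3$: using $|\gamma_2(G)| = 16$, exponent $2$ on $\gamma_2(G)$, and $\Z(G) \le \gamma_2(G)$, the factors of the lower central series have very few possible orders, and in each of those possibilities a direct commutator computation (analogous to the step in Theorem~A) should produce $\K(G) = \gamma_2(G)$.

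Once reduced to class $2$, $\Z(G) = \gamma_2(G)$ and I would work with the alternating $\mathbb{F}_2$-bilinear form $b : V \times V \to W$, where $V = G/\Z(G)$ and $W = \gamma_2(G) \cong \mathbb{F}_2^4$. Note that $\K(G) = \bigcup_{v \in V} b(v, V)$ is a union of $\mathbb{F}_2$-subspaces of $W$ with $\dim b(v, V) = \log_2 |G : \C_G(\tilde v)|$, so the main question becomes a concrete combinatorial problem about covering $W$ by such subspaces. I expect the covering to fail precisely in the two regimes (2a), when some rank-$1$ subspace $b(v, V)$ occurs, and (2b), when every nonzero $b(v, V)$ has dimension in $\{2, 3\}$ and no generating set of $G$ decomposes $b$ into two mutually orthogonal hyperbolic pairs. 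The special $2$-group of order $2^9$ in case (1) corresponds to the separate subregime $\dim V = 5$ with a specific non-generic bilinear structure; I would verify its status by reading off $[G, v_i]$ from the presentation and exhibiting an explicit non-commutator in $\gamma_2(G)$. For every other class-$2$ group with $\dim V \ge 5$ I would argue $\K(G) = \gamma_2(G)$ using the additional freedom of $\binom{\dim V}{2} \ge 10$ independent generating commutators mapping onto a $4$-dimensional $W$.

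The moreover statement I would handle by noting that the subspaces $b(v, V)$ jointly generate $W$, so any $w \in W$ is expressible as $w_1 + w_2$ with $w_i \in b(v_i, V) \subseteq \K(G)$ in each exceptional case; a short case-check in (1), (2a), and (2b) confirms this sum-of-two-subspaces statement (for instance, a $3$-dimensional $b(v, V)$ together with any transverse nontrivial $b(v', V)$ already yield $b(v, V) + b(v', V) = W$). The principal obstacle, I expect, will be the combinatorics of unions of small $\mathbb{F}_2$-subspaces of $\mathbb{F}_2^4$: the odd-$p$ inequality $|\bigcup_i L_i| \le 1 + \sum_i (|L_i| - 1)$ is too weak here, subspaces overlap heavily, and the characteristic-$2$ coincidence between symmetric and alternating forms removes a tool used freely at odd primes. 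Isolating (2b) as a \emph{necessary} condition (as opposed to merely sufficient) is likely the most delicate step, and is presumably why the authors separated $p = 2$ into its own theorem rather than absorbing it into Theorem~A.
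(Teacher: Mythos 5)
Your overall shape (isoclinism reduction, the bilinear-form/covering viewpoint for class $2$, and the identification of the three exceptional regimes) matches the truth of the statement, but the two steps you leave as heuristics are exactly where the paper has to do real work, and as written they are genuine gaps. First, the reduction of the nilpotency class and of the group order: at $p=2$ the analogue of Lemma \ref{prelemma1} fails (squares need not be central) and the Parmeggiani--Stellmacher classification (Theorem \ref{psthm}) is only for odd $p$, so "a direct commutator computation analogous to Theorem A" is not available for class $\ge 3$. The paper replaces this by Wilkens' classification of $2$-groups of breadth $3$ (Theorem \ref{bettina}), from which Theorem \ref{keyresult1} deduces that the class is $2$ or $3$ and that in class $3$ all of $\gamma_2(P)$ is carried by a $3$-generator subgroup of order $2^7$; only then can one settle the class-$3$ case (Lemma \ref{cl3lemma}, by GAP or an argument on the lines of Lemma \ref{p7lem1}). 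Your proposal contains no substitute for this structural input, and without it "rule out classes $\ge 3$" is an unproved claim, not a step.

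Second, in class $2$ with $\dim V \ge 5$, counting ("$\binom{\dim V}{2}\ge 10$ generating commutators onto a $4$-dimensional $W$") cannot decide the covering question: the exceptional group in case (1) itself has $\dim V = 5$ and order $2^9$, so the dichotomy you need is invisible to any such counting argument. The paper isolates this group through Wilkens' case (iv)(5): every breadth-$3$ group falling under that case is isoclinic to a member of the explicit family \eqref{lasteq}, and Lemma \ref{lastlemma} shows by a parameter analysis that $\K(G)\ne\gamma_2(G)$ occurs exactly for $r=s=t=0$, while the remaining large class-$2$ groups (Wilkens (iii)) are handled as in Lemma \ref{cl2lemma2} (Lemma \ref{29cl2lemma2}). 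Likewise, the necessity of (2b) at order $2^8$ requires the generating-set normalizations of Lemma \ref{28cl2lemma3} (the $p=2$ end-game differs from Lemma \ref{cl2lemma1a}: the obstruction is the single element $[a',b][b,c'][a',d]$ rather than a non-square choice of $-\lambda\mu^{-1}$); you flag this as delicate but give no argument. Your treatment of the "moreover" clause is fine, though the paper gets it more cheaply from Rodney's Theorem \ref{dmr77} together with Lemma \ref{prelemma5}. To complete your route you would need either to import Theorem \ref{bettina} (as the paper does) or to classify outright the surjective alternating maps $b: V\times V\to \mathbb{F}_2^4$ with trivial radical, $\dim V\ge 5$, and no $3$-dimensional image $b(v,V)$ missing from a cover --- a task of essentially the same difficulty as the theorem itself.
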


A GAP computation shows that there are $2917$ groups $G$  of order $2^8$ with $\gamma_2(G)$ elementary abelian of order $16$, out of which  $1542$ groups  satisfy $\K(G) = \gamma_2(G)$.

Our notations for objects associated with a finite multiplicative group $G$ are mostly standard.   If $x,y \in G$, then $x^y$ denotes the conjugate element 
$y^{-1}xy \in G$ and $[x,y] = [x,y]_G$ denotes the commutator  $x^{-1}y^{-1}xy = x^{-1}x^y \in G$. If $x \in G$, then  $[x,G]$  denotes the set  
$\{[x, g] \mid g \in G\}$ (and not the subgroup generated by these commutators). If $[x,G] \subseteq \Z(G)$, then $[x,G]$ becomes a subgroup of $G$. For $n \ge 1$, we define $[x, {}_{n+1}y] = [[x, {}_ny], y]$, where $[x, {}_1y] = [x, y]$.
For a subgroup $H$ of $G$, $\C_{G}(H)$ denotes the centralizer of $H$ in $G$ and for an element $x \in G$, $\C_{G}(x)$ denotes the centralizer of $x$ in $G$. 
By $\mathbb{F}_p$ we denote the field of $p$ elements. We write the subgroups in the lower central series of $G$ as $\gamma_n(G)$,  where $n$ runs over all strictly positive integers. For a finite $p$-group $G$, we define $\Omega_i(G) := \langle x \in G \mid x^{p^i} = 1\rangle$ and  $\mho_i(G) = \langle x^{p^i} \mid x \in G \rangle$. 


In Section 2 we present some preliminaries and a reduction argument. Sections 4, 5 and 6 deal with $p$-groups under consideration having nilpotency class 2, 4 and 3, respectively.  Proof of Theorem A is presented in Section 6. $2$-groups under consideration are dealt with in Section 7, where a proof of Theorem B is presented. Several examples are exhibited in Section 8. We conclude this section with the remark that we  used Magma \cite{BCP} and GAP \cite{GAP} for establishing our results for small  primes, before writing final proofs.


\section{Reductions}

We start with the following concept of isoclinism of groups,  introduced by P. Hall \cite{pH40}.

Let $X$ be a  group and $\bar{X} = X/\Z(X)$. 
Then commutation in $X$ gives a well defined map
$a_{X} : \bar{X} \times \bar{X} \mapsto \gamma_{2}(X)$ such that
$a_{X}(x\Z(X), y\Z(X)) = [x,y]$ for $(x,y) \in X \times X$.
Two  groups $G$ and $H$ are called \emph{isoclinic} if 
there exists an  isomorphism $\phi$ of the factor group
$\bar G = G/\Z(G)$ onto $\bar{H} = H/\Z(H)$, and an isomorphism $\theta$ of
the subgroup $\gamma_{2}(G)$ onto  $\gamma_{2}(H)$
such that the following diagram is commutative
\[
 \begin{CD}
   \bar G \times \bar G  @>a_G>> \gamma_{2}(G)\\
   @V{\phi\times\phi}VV        @VV{\theta}V\\
   \bar H \times \bar H @>a_H>> \gamma_{2}(H).
  \end{CD}
\]
The resulting pair $(\phi, \theta)$ is called an \emph{isoclinism} of $G$ 
onto $H$. Notice that isoclinism is an equivalence relation among  groups.  

Let $G$ be a finite $p$-group. Then it follows from \cite{pH40} that there exists a finite $p$-group $H$ in the isoclinism family of $G$ such that 
$\Z(H) \le \gamma_2(H)$. Such a group $H$ is called a \emph{stem group} in the isoclinism family of $G$.  As an easy consequence of this statement, we get

\begin{cor}\label{precor}
A finite $p$-group $G$ with $|\gamma_2(G)| = p$ is isoclinic to an extraspecial $p$-group.
\end{cor}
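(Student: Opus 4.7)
My plan is to apply Hall's theorem on stem groups (recalled in the paragraph immediately preceding the corollary) to reduce to analyzing a single well-chosen representative of the isoclinism family, and then verify the three defining properties of an extraspecial $p$-group. Since isoclinism preserves the commutator subgroup up to isomorphism, I may pass to a stem group $H$ isoclinic to $G$ satisfying $\Z(H) \le \gamma_2(H)$, and it suffices to show that $H$ is extraspecial.

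First I would pin down the nilpotency class and center of $H$. Since $|\gamma_2(H)| = p$, the subgroup $\gamma_3(H) \le \gamma_2(H)$ is either trivial or equal to $\gamma_2(H)$; the latter is ruled out because $H$ is nilpotent, so $\gamma_3(H) = 1$ and $H$ has class at most $2$. This gives the standard inclusion $\gamma_2(H) \le \Z(H)$, which combined with $\Z(H) \le \gamma_2(H)$ forces $\Z(H) = \gamma_2(H)$, a group of order $p$.

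Next I would show that $H/\Z(H)$ is elementary abelian. It is already abelian, since $H$ has class $2$, so I only need to verify that every $p$-th power lies in $\Z(H)$. For any $h, g \in H$, the class-$2$ commutator identity yields
\[
[h^p, g] = [h, g]^p,
\]
and the right side is trivial because $\gamma_2(H)$ has order $p$. Thus $h^p \in \C_H(g)$ for every $g$, so $h^p \in \Z(H)$; that is, $H/\Z(H)$ has exponent $p$.

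Combining these facts, $H$ has center of order $p$ with $H/\Z(H)$ elementary abelian, which is precisely the definition of an extraspecial $p$-group. Hence $G$ is isoclinic to the extraspecial group $H$, which proves the corollary. I do not anticipate a real obstacle here: the only mildly delicate point is invoking Hall's existence of a stem group correctly, after which the argument is a short verification using class-$2$ commutator calculus and the smallness of $\gamma_2(H)$.
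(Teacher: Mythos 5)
Your proof is correct and follows exactly the route the paper intends: the paper states this as an immediate consequence of Hall's stem-group result and gives no further argument, and your verification (pass to a stem group $H$, note class $2$ forces $\Z(H)=\gamma_2(H)$ of order $p$, then use $[h^p,g]=[h,g]^p=1$ to get $H/\Z(H)$ elementary abelian) is the standard way to fill in that "easy consequence."
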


 \begin{lemma}\label{prelemma}
Let $G$ and $H$ be two isoclinic finite $p$-groups.  Then  $\K(G) = \gamma_2(G)$ if and only if $\K(H) = \gamma_2(H)$.
\end{lemma}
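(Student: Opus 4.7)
The plan is to exploit the commutative square in the definition of isoclinism to show that the isomorphism $\theta\colon \gamma_2(G)\to\gamma_2(H)$ carries the set of commutators of $G$ exactly onto the set of commutators of $H$. Once we have $\theta(\K(G)) = \K(H)$, the lemma follows immediately from the fact that $\theta$ is a bijection from $\gamma_2(G)$ to $\gamma_2(H)$: $\K(G)=\gamma_2(G)$ if and only if $\theta(\K(G))=\theta(\gamma_2(G))$, if and only if $\K(H)=\gamma_2(H)$.

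Concretely, let $(\phi,\theta)$ be an isoclinism from $G$ to $H$. For $x,y\in G$, choose $x',y'\in H$ with $x'\Z(H)=\phi(x\Z(G))$ and $y'\Z(H)=\phi(y\Z(G))$. The commutativity of the square then translates into
\[
\theta([x,y]_G)=\theta\bigl(a_G(x\Z(G),y\Z(G))\bigr)=a_H(\phi(x\Z(G)),\phi(y\Z(G)))=[x',y']_H,
\]
so $\theta$ sends every element of $\K(G)$ into $\K(H)$.

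Conversely, given an arbitrary commutator $[u,v]_H\in\K(H)$, surjectivity of $\phi$ produces cosets $x\Z(G),y\Z(G)\in\bar G$ with $\phi(x\Z(G))=u\Z(H)$ and $\phi(y\Z(G))=v\Z(H)$; since commutators only depend on cosets modulo the center, the same computation gives $\theta([x,y]_G)=[u,v]_H$. Hence $\theta(\K(G))\supseteq \K(H)$, and combined with the previous inclusion we obtain $\theta(\K(G))=\K(H)$, which together with the remark in the first paragraph finishes the proof.

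The argument is entirely formal and I do not expect any obstacle; the only point that needs a sentence of care is that the commutator of two elements of $G$ is well defined modulo the center, so that the choice of representatives $x',y'$ in $H$ above does not matter — this is precisely what makes the map $a_X$ well defined in the definition of isoclinism.
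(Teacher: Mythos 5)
Your proposal is correct and follows essentially the same route as the paper: both arguments use the commutative square to transfer commutators through $\theta$ and $\phi$, the paper phrasing this as ``$a_G$ surjective implies $a_H$ surjective'' (and invoking symmetry of isoclinism for the converse), while you prove the slightly stronger statement $\theta(\K(G))=\K(H)$ directly. No gap; the well-definedness point you flag is exactly the one the definition of $a_X$ already guarantees.
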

\begin{proof}
Since isoclinism is an equivalence relation, it is sufficient to prove one side implication. So assume that $\K(G) = \gamma_2(G)$, and  the map $a_G$ in the above commutative diagram is surjective. Let $u \in \gamma_2(H)$ be an arbitrary element. There exists $(\bar{g}_1, \bar{g}_2) \in \bar G \times \bar G$ such that $\theta(a_G(\bar{g}_1, \bar{g}_2)) = u$. Thus there exists $(\bar{h}_1, \bar{h}_2) \in  \bar H \times \bar H$, namely $(\phi(\bar{g}_1), \phi(\bar{g}_2))$, such that $a_H(\bar{h}_1, \bar{h}_2) = u$ proving that $a_H$ is surjective, and the proof is complete.  \hfill $\Box$

\end{proof}

In view of the preceding lemma, it is sufficient to consider a stem group from a given isoclinism family.

\begin{lemma}\label{prelemma1}
Let $G$ be a  finite $p$-group such that $\gamma_2(G)$ is elementary abelian of order $p^4$. If the nilpotency class of $G$ is at most $3$, then $\{x^p \mid x \in G\} \subseteq \Z(G)$ for all  $p \ge 3$. Otherwise the conclusion, in general, holds only for $p \ge 5$.
\end{lemma}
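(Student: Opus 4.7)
The plan is to verify that $[x^p, y] = 1$ for every $x, y \in G$, which is equivalent to $x^p \in \Z(G)$. First, the hypothesis $|\gamma_2(G)| = p^4$ forces the nilpotency class $c$ of $G$ to satisfy $c \le 5$, because $|\gamma_i(G)/\gamma_{i+1}(G)| \ge p$ for every $2 \le i \le c$. So it suffices to treat classes $2, 3, 4, 5$.

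The main tool is the Hall--Petrescu collection formula applied to $(xz)^p$, where $z = [x, y]$ and $x^y = xz$. Combined with $(x^y)^p = (x^p)^y$, this yields
$$[x^p, y] = x^{-p}(x^y)^p = z^p \cdot \prod_{i=2}^{p} c_i^{\binom{p}{i}},$$
where each $c_i \in \gamma_i(\langle x, z\rangle)$. Since $z \in \gamma_2(G)$ and every nontrivial basic commutator contributing to $c_i$ must involve at least one occurrence of $z$, a straightforward weight count gives $c_i \in \gamma_{i+1}(G)$ for $i \ge 2$. In particular each $c_i$ lies in $\gamma_2(G)$, which has exponent $p$ by hypothesis, so has order dividing $p$.

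The leading factor $z^p = [x, y]^p$ is therefore trivial. For each $2 \le i \le p - 1$, the binomial coefficient $\binom{p}{i}$ is divisible by $p$, so $c_i^{\binom{p}{i}} = 1$. The only remaining term is $c_p^{\binom{p}{p}} = c_p$, and since $c_p \in \gamma_{p+1}(G)$, this vanishes provided $c \le p$. This condition is automatic when $c \le 3$ and $p \ge 3$; when $c \in \{4, 5\}$ (the remaining possibilities) it requires $p \ge 5$. These two ranges yield exactly the two clauses of the lemma.

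I expect the main non-routine step to be correctly setting up the Hall--Petrescu expansion and checking that each $c_i$ lies in $\gamma_{i+1}(G)$; the remainder is a routine divisibility check on $\binom{p}{i}$ together with the exponent hypothesis on $\gamma_2(G)$. The phrase ``in general'' in the second clause of the lemma reflects that for $p = 3$ with class exactly $4$, the surviving term $c_3 \in \gamma_4(G)$ need not be trivial, so the conclusion can genuinely fail there and the hypothesis $p \ge 5$ cannot be dropped.
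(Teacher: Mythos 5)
Your argument is correct, and it rests on exactly the same three pillars as the paper's proof: the class bound $c\le 5$ forced by $|\gamma_2(G)|=p^4$, the divisibility $p\mid\binom{p}{i}$ for $1<i<p$ combined with the exponent-$p$ hypothesis on $\gamma_2(G)$, and the fact that the single surviving top term dies exactly when $c\le p$, which gives the dichotomy $p\ge 3$ for class at most $3$ versus $p\ge 5$ otherwise. The mechanics differ, though. The paper expands $[x,y^p]$ directly as $[x,y]^p[x,y,y]^{\binom{p}{2}}[x,y,y,y]^{\binom{p}{3}}[x,y,y,y,y]^{\binom{p}{4}}$, an identity that is exact here because all the iterated commutators $[x,{}_iy]$ lie in the abelian group $\gamma_2(G)$ and weight-six terms vanish, so the factors arrive already sorted by their weight in $G$ and no further bookkeeping is needed. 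You instead compute $[x^p,y]$ by applying the Hall--Petrescu collection formula to $(x[x,y])^p$, which costs the extra (correctly executed) step $\gamma_i(\langle x,[x,y]\rangle)\le\gamma_{i+1}(G)$ for $i\ge 2$, but in exchange uses nothing about $\gamma_2(G)$ beyond its exponent and the class bound, so your proof would survive with $\gamma_2(G)$ merely of exponent $p$ rather than elementary abelian. A minor caveat: some references state Hall--Petrescu as $x^pz^p=(xz)^pc_2^{\binom{p}{2}}\cdots c_p$ rather than in the form you quote, but the discrepancy is harmless since you only use that each correction factor is an element of $\gamma_i(\langle x,z\rangle)$ raised to the power $\binom{p}{i}$. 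Like the paper, you prove the positive assertions and leave the sharpness at $p=3$, class $4$, to the level of a remark rather than an explicit example, which is consistent with the ``in general'' hedge in the statement.
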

\begin{proof}
 Since $\gamma_2(G)$ is of order $p^4$, the nilpotency class of $G$ is at most $5$. Thus, $\gamma_2(G)$ being elementary abelian,  for all $x, y \in G$, we have
 $$ [x,y^p]=[x,y]^p [x,y,y] ^{\binom p2} [x,y,y,y]^{\binom p3}  [x,y,y,y,y]^{\binom p4}=1. $$
 Hence $y^p \in Z(G)$.
  \hfill $\Box$

\end{proof}

\begin{lemma}\label{prelemma2}
For a  finite $p$-group $G$ of nilpotency class at least $4$, $\Z(G) \cap \gamma_2(G)$ can not be maximal in $\gamma_2(G)$.
\end{lemma}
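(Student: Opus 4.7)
My plan is to argue by contradiction. Suppose $Z := \Z(G) \cap \gamma_2(G)$ is a maximal subgroup of the $p$-group $\gamma_2(G)$, so that $|\gamma_2(G):Z| = p$, and let $c \geq 4$ denote the nilpotency class of $G$. The whole proof will pivot on comparing the normal subgroup $\gamma_3(G)$ with $Z$ inside $\gamma_2(G)$.

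First I would dispose of the easy case $\gamma_3(G) \subseteq Z$: here $\gamma_3(G) \subseteq \Z(G)$, so $\gamma_4(G) = [\gamma_3(G), G] = 1$, forcing the nilpotency class to be at most $3$ and contradicting $c \geq 4$.

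In the remaining case $\gamma_3(G) \not\subseteq Z$; since $Z$ is maximal in $\gamma_2(G)$, this forces $\gamma_3(G)\cdot Z = \gamma_2(G)$. The key calculation is a one-line commutator identity: for any $x \in \gamma_3(G)$, $z \in Z$ and $g \in G$, centrality of $z$ yields $[xz,g] = [x,g]^z[z,g] = [x,g]$, so that $[\gamma_3(G)\cdot Z, G] = [\gamma_3(G), G]$. Consequently
\[
\gamma_3(G) \;=\; [\gamma_2(G), G] \;=\; [\gamma_3(G)\cdot Z, G] \;=\; [\gamma_3(G), G] \;=\; \gamma_4(G).
\]
In a nilpotent group the equality $\gamma_3(G) = \gamma_4(G)$ iterates to $\gamma_3(G) = \gamma_j(G)$ for all $j \geq 3$ and hence $\gamma_3(G) = 1$, which again contradicts $c \geq 4$.

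There is no serious technical obstacle here; the only point worth flagging is that the maximality of $Z$ is precisely what forces a rigid dichotomy, in each branch of which either $\gamma_4(G)$ or $\gamma_3(G)$ collapses to the identity.
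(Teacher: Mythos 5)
Your proof is correct. The dichotomy is legitimate: $Z:=\Z(G)\cap\gamma_2(G)$ and $\gamma_3(G)$ are both normal in $G$, so $\gamma_3(G)Z$ is a subgroup of $\gamma_2(G)$ containing $Z$, and maximality of $Z$ forces either $\gamma_3(G)\le Z$ (whence $\gamma_4(G)=1$) or $\gamma_3(G)Z=\gamma_2(G)$; in the latter case the identity $[xz,g]=[x,g]^z[z,g]=[x,g]$ for central $z$ gives $\gamma_3(G)=[\gamma_2(G),G]=[\gamma_3(G),G]=\gamma_4(G)$, and the standard stabilization argument for the lower central series of a nilpotent group then yields $\gamma_3(G)=1$, a contradiction. (The remark that maximality means index $p$ is true but never needed.)

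Your route differs from the paper's. The paper passes to the quotient $\bar G=G/\gamma_4(G)$, which has class exactly $3$, and argues that since $\gamma_2(\bar G)\not\le\Z(\bar G)$ the image of $\Z(G)\cap\gamma_2(G)$ cannot be enlarged inside $\gamma_2(\bar G)$, so $\gamma_3(\bar G)\le\Z(\bar G)\cap\gamma_2(\bar G)$ forces $\gamma_3(G)\le\Z(G)$, contradicting class $\ge 4$; the mechanism is an analysis of the center of the class-$3$ quotient. Your argument avoids the quotient altogether and instead collapses the lower central series directly: maximality gives the dichotomy, and in the nontrivial branch the one-line computation $[xz,g]=[x,g]$ shows $\gamma_3(G)=\gamma_4(G)$. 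What your version buys is self-containedness and slightly greater transparency (it uses nothing about $p$-groups beyond nilpotency, and every step is an explicit commutator identity or the standard fact that a stabilizing lower central series of a nilpotent group is trivial); the paper's version is shorter on the page because it leans on the reader to verify how ``$\Z(\bar G)$ cannot grow'' and how $\gamma_3(G)\le\Z(G)\gamma_4(G)$ forces $\gamma_3(G)\le\Z(G)$, steps which your computation in effect makes explicit. Both proofs hinge on the same underlying point: maximality of $\Z(G)\cap\gamma_2(G)$ in $\gamma_2(G)$ leaves no room between $\gamma_3(G)$ and the center without collapsing a term of the lower central series.
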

\begin{proof}
Contrarily assume that $\Z(G) \cap \gamma_2(G)$ is maximal in $\gamma_2(G)$. Now consider the quotient group $\bar{G} := G/\gamma_4(G)$, whose nilpotency class is $3$. Thus  $\gamma_2(\bar{G}) \not\le \Z(\bar{G})$. This means that $\Z(\bar{G})$ can not grow in   $\gamma_2(\bar{G}) = \gamma_2(G)/\gamma_4(G)$. $\bar{G}$ being of nilpotency class $3$, we have $\gamma_3(\bar{G}) \le \Z(\bar{G}) \cap \gamma_2(\bar{G}) =  \big(\Z(G) \gamma_4(G)/\gamma_4(G)\big) \cap \gamma_2(G)/\gamma_4(G)$, which is possible only when 
$\gamma_3(G) \le \Z(G)$, a contradiction. \hfill $\Box$

\end{proof}

The proof of each of the following three lemmas is straightforward.
\begin{lemma} \label{prelemma3}
Let $G$ be a finite group and $H\leq \gamma_2(G) \cap Z(G)$. If there exist $x_1,x_2, \ldots , x_n$ such that $\gamma_2(G)/H= \bigcup\limits_{i=1}^{n} [x_iH,G/H] $ and $H\subseteq \bigcap \limits_{i=1}^{n} [x_i, G]$,  then $\gamma_2(G)= \bigcup \limits_{i=1}^{n} [x_i, G]$.
\end {lemma}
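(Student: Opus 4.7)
The plan is to start from an arbitrary $c \in \gamma_2(G)$ and produce, for some index $i$, an element $w \in G$ with $c = [x_i, w]$. First, the hypothesis $\gamma_2(G)/H = \bigcup_{i=1}^{n}[x_iH, G/H]$ yields some $i$ and some $g \in G$ with $cH = [x_i, g]H$, hence $c = [x_i, g]\,h$ for some $h \in H$. The second hypothesis $H \subseteq [x_i, G]$ then supplies $g' \in G$ with $h = [x_i, g']$. Thus the task reduces to merging the two commutators $[x_i, g]$ and $[x_i, g']$, which share their first entry, into a single commutator with first entry $x_i$.

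The merging step relies on the standard identity $[x, yz] = [x, z]\,[x, y]^z$. Applied with $x = x_i$, $y = g'$, $z = g$ it gives
$$[x_i, g' g] = [x_i, g]\,[x_i, g']^g.$$
Since $[x_i, g'] = h \in H \le \Z(G)$, conjugation by $g$ fixes $h$, so the right-hand side equals $[x_i, g]\,h = c$. Hence $c = [x_i, g' g] \in [x_i, G]$, and taking the union over $i$ gives $\gamma_2(G) = \bigcup_{i=1}^{n}[x_i, G]$, as desired.

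There is no real obstacle: the argument is a direct formal manipulation using the standard commutator identity together with the centrality of $H$. The content of the lemma is essentially the observation that when the mod-$H$ ambiguity in the first hypothesis is itself realized as commutators sharing the first entry $x_i$, that ambiguity can be absorbed back into a single commutator with the same first entry.
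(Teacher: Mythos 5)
Your argument is correct: the identity $[x_i, g'g]=[x_i,g][x_i,g']^{g}$ together with the centrality of $H$ merges the two commutators exactly as needed, and the reverse inclusion is trivial. The paper omits the proof as ``straightforward,'' and what you wrote is precisely that intended routine argument.
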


\begin{lemma} \label{prelemma4}
Let a finite group $G$ be a central product of its subgroups $M$ and $N$. Then $\K(G) = \gamma_2(G)$ if and only if $\K(M) = \gamma_2(M)$ and $\K(N) = \gamma_2(N)$.
\end{lemma}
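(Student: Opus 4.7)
The plan is to reduce everything to a single commutator identity in the central product. Since $G = MN$ with $[M,N] = 1$, expanding via the standard identities $[ab,c] = [a,c]^b[b,c]$ and $[a,bc] = [a,c][a,b]^c$ and using that each $n \in N$ acts trivially on $M$ (and vice versa) collapses to
\[
[m_1 n_1,\, m_2 n_2] \;=\; [m_1, m_2]\,[n_1, n_2]
\]
for all $m_i \in M$ and $n_i \in N$. This says every commutator of $G$ factors as a commutator of $M$ times a commutator of $N$, so $\K(G) = \K(M)\cdot\K(N)$ as a set; simultaneously, $\gamma_2(G) = \gamma_2(M)\gamma_2(N)$ since $\gamma_2(M)$ and $\gamma_2(N)$ are normal in $G$.

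The ``if'' direction then follows at once: assuming $\K(M) = \gamma_2(M)$ and $\K(N) = \gamma_2(N)$, one gets $\K(G) = \K(M)\K(N) = \gamma_2(M)\gamma_2(N) = \gamma_2(G)$.

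For the converse, take $w \in \gamma_2(M) \subseteq \gamma_2(G) = \K(G)$ and write $w = [g_1, g_2]$. The identity above forces $w = [m_1, m_2]\,[n_1, n_2]$; since both $w$ and $[m_1, m_2]$ lie in $M$, the factor $[n_1, n_2]$ lies in $M \cap N \subseteq \Z(G)$. The remaining task is to absorb this central element back into a commutator of $M$, after which the same argument with the roles of $M$ and $N$ interchanged yields $\K(N) = \gamma_2(N)$. The step I expect to require the most care is this absorption: one must exploit that $M \cap N$ sits centrally in both factors, together with the full strength of $\K(G) = \gamma_2(G)$ applied to a compatibly chosen decomposition, to conclude that $w$ itself is a genuine commutator in $M$.
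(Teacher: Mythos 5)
Your factorization $[m_1n_1,m_2n_2]=[m_1,m_2][n_1,n_2]$ and the resulting set identities $\K(G)=\K(M)\K(N)$ and $\gamma_2(G)=\gamma_2(M)\gamma_2(N)$ are correct, and they settle the ``if'' direction completely; that is the genuinely straightforward half, and it is the only implication the paper ever invokes (e.g.\ in the proofs of Lemmas \ref{cl2lemma2} and \ref{main1}).

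The converse, however, is not proved: you reduce it to ``absorbing'' the central factor $[n_1,n_2]\in M\cap N$ into a single commutator of $M$, and leave that absorption as a step expected to work with sufficient care. This is not a matter of care --- the step is impossible in general, because the ``only if'' implication is false as stated. Indeed, take $M=F/\langle [a,d],[b,d]\rangle$, the group of order $p^8$, nilpotency class $2$ and exponent $p$ ($p$ odd) from Section 8, for which Lemma \ref{cl2lemma1} shows $[a,b][c,d]\notin\K(M)$, so $\K(M)\neq\gamma_2(M)$. Let $N=\langle x,y\rangle$ be extraspecial of order $p^3$ and exponent $p$, and let $G$ be the central product of $M$ and $N$ over $\langle [a,b]\rangle$, obtained by identifying $[x,y]$ with $[a,b]$. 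Then $\gamma_2(G)=\gamma_2(M)\gamma_2(N)=\gamma_2(M)$, and for arbitrary $j,r,s,t\in\mathbb{F}_p$, using $[M,N]=1$ and nilpotency class $2$,
\[
[a,b]^{j}[a,c]^{r}[b,c]^{s}[c,d]^{t}=[a^{r}b^{s}d^{-t}x,\; c\,y^{j}],
\]
so every element of $\gamma_2(G)$ is a commutator of $G$. Hence $\K(G)=\gamma_2(G)$ and $\K(N)=\gamma_2(N)$, while $\K(M)\neq\gamma_2(M)$: the central factor $[n_1,n_2]$ coming from the amalgamated subgroup genuinely cannot always be pushed back into $\K(M)$. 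So only the ``if'' implication admits a proof (and yours is fine); the converse would need additional hypotheses restricting how commutators of $N$ lying in $M\cap N$ interact with $\K(M)$, and your proposed strategy cannot be completed as announced.
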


\begin{lemma}\label{prelemma5}
Let $G$ be a finite $p$-group and $H$ a normal subgroup of $G$ of order $p$ contained in $\gamma_2(G)$ such that $\gamma_2(G/H) =\K(G/H)$. Then every element of $\gamma_2(G)$ can be written as a product of at most two elements from $\K(G)$.
\end{lemma}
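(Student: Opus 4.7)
The plan is to lift a single commutator from $G/H$ up to a product of two commutators in $G$. Take $w \in \gamma_2(G)$. By hypothesis $wH \in \gamma_2(G/H) = \K(G/H)$, so there exist $x, y \in G$ with $wH = [x,y]H$, equivalently $w = [x,y]\,h$ for some $h \in H$. Because $H$ is a normal subgroup of order $p$ in a $p$-group, $H \le \Z(G)$, so $h$ is central.

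If $h = 1$, then $w = [x,y]$ itself is a single commutator and the conclusion holds. Otherwise $h$ generates $H$, and to finish it will suffice to realize $h$ as a commutator $[u, v]$, so that $w = [x,y][u,v]$ expresses $w$ as a product of two commutators. Centrality of $H$ turns this into an all-or-nothing question about $H \cap \K(G)$: once a single nontrivial element $[u_0, v_0] \in H$ is known to be a commutator, the identity $[u_0^k, v_0] = [u_0, v_0]^k$ (valid because $[u_0, v_0]$ is central) produces every element of $H$ as a commutator, and we are done.

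The remaining task, which is the delicate point of the argument, is to produce at least one nontrivial commutator in $H$. The naive move---apply the hypothesis to $h$ itself---yields $h = [x', y']\,h'$ with $h' \in H$ and hence $[x', y'] = h(h')^{-1} \in H$, but it does not a priori rule out $[x', y'] = 1$. One must extract a nontrivial choice by exploiting the hypothesis on elements of $\gamma_2(G) \setminus H$: for any $g \in \gamma_2(G) \setminus H$ the hypothesis forces a commutator $[x,y] \in gH$ with $[x, y] \notin H$, and by comparing two distinct commutator representatives lying in a common coset one obtains a nontrivial element of $H$ expressible through commutators. Combining this observation with centrality of $H$ and a minimality argument on the number of commutator factors needed to represent an element of $H$ should then produce a single nontrivial commutator in $H$, completing the proof.
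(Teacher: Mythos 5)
Your opening reduction is the natural one and is exactly how the lemma is meant to be used: since $H$ is normal of order $p$ in a $p$-group it is central, and $\gamma_2(G/H)=\K(G/H)$ gives $w=[x,y]h$ with $h\in H$, so your whole argument hinges on realizing the central factor $h$ as a single commutator. That final step is not only left open in your proposal (you yourself only say a minimality argument ``should'' produce a nontrivial commutator in $H$), it cannot be carried out in general: under the hypotheses of Lemma \ref{prelemma5} the subgroup $H$ may contain no nontrivial commutator at all. Take $G$ of order $p^8$ and class $2$ with $\gamma_2(G)=\Z(G)$ elementary abelian generated by $[a,b],[a,c],[b,c],[c,d]$ and $[a,d]=[b,d]=1$ (the group $F/R$ of Section 8), and $H=\gen{[a,b][c,d]}$. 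Then $\gamma_2(G/H)$ is elementary abelian of order $p^3$, so $\K(G/H)=\gamma_2(G/H)$ by Theorem \ref{dmr77} and the hypotheses hold; but the computation in Lemma \ref{cl2lemma1}, run with right-hand side $k$ in place of $1$, shows $([a,b][c,d])^k\notin\K(G)$ for every $k\not\equiv 0 \pmod p$, i.e.\ $H\cap\K(G)=1$. So no argument can produce a nontrivial commutator in $H$, and your ``all-or-nothing'' reduction discards precisely the decompositions that are actually needed: in this example every element $[a,b]^{i_1}[a,c]^{i_2}[b,c]^{i_3}[c,d]^{i_4}$ of $\gamma_2(G)$ is a product of two commutators (a commutator of $\gen{a,b,c}$ times $[c^{i_4},d]$), but neither factor lies in $H$. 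Your auxiliary observations do not help either: two distinct commutators in a common coset of $H$ give a nontrivial element of $H$ that is a product of two commutators -- which is exactly what $[a,b][c,d]$ is -- not a single one, and nothing guarantees such a coset exists.

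The way to get the conclusion as cheaply as intended is to exploit the freedom in choosing $H$ rather than to force $H\subseteq\K(G)$ for a prescribed $H$. A nonabelian nilpotent group always contains a nontrivial commutator $c_0\in\Z(G)$ (push any nontrivial commutator down the lower central series via $c\mapsto[c,g]$), and a suitable power of $c_0$ is a central commutator of order $p$, since $[u_0^k,v_0]=[u_0,v_0]^k$ when $[u_0,v_0]$ is central -- the identity you already noted. In every application of Lemma \ref{prelemma5} in this paper, Theorem \ref{dmr77} applies to \emph{every} order-$p$ subgroup of $\gamma_2(G)$, so one may take $H=\gen{c_0}$; then each $h\in H$ is a commutator and $w=[x,y]h$ finishes in one line. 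If instead $H$ is genuinely arbitrary, the step ``$h\in\K(G)$'' is exactly what fails, and one must exhibit a two-commutator decomposition of $w$ of a different shape, as in the example above.
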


Following  \cite[p. 28]{gorenstein}, a group $G$ is said to be an {\it amalgamated (internal) semidirect product} of subgroups $H$ by $K$ over $M$, written $G=H\rtimes_M K$,  if $H \trianglelefteq G$, $G=HK$ and $H\cap K=M$. In particular, if $[H,K]=1$ then $M\le Z(G)$, and we call $G$ the {\it central product} of $H$ by $K$ over $M$, written $G=H\times_M K$.

For a finite $p$-group $G$, the {\it breadth} of an element $x \in G$, denoted by $b(x)$, is defined as 
\[p^{b(x)} := |G:\C_G(x)|,\]
and the {\it breadth} of $G$, denoted by $b(G)$, is defined as 
\[b(G) := max\{b(x) \mid x \in G\}.\]

\begin{remark}\label{remark}
Let $G$ be a finite $p$-group with $|\gamma_2(G)| = p^4$. Then by \cite[Theorem A]{PS99} it follows that $b(G) \ge 3$. We'll use this information throughout without any further reference. 
\end{remark}

We'll need the following result of Rodney \cite[Theorem B]{dmR77}.

\begin{thm}\label{dmr77}
Let $G$ be a finite group such that $\gamma_2(G)$  is elementary abelian of order $p^3$, where $p$ is any prime integer. Then $\K(G) = \gamma_2(G)$.
\end{thm}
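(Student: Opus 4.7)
My plan is first to reduce to the case that $G$ is a finite $p$-group. Since $\gamma_2(G)$ is a $p$-group, a standard Sylow-type argument allows this reduction, and then by Lemma \ref{prelemma} together with the stem-group argument recalled in Section 2, I may further assume $\Z(G) \le \gamma_2(G)$.

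Now I split on the breadth $b(G)$. For each $x \in G$ the map $g \mapsto [x,g]$ from $G$ onto $[x,G]$ has fibers equal to the cosets of $\C_G(x)$, so $|[x,G]| = |G : \C_G(x)| = p^{b(x)}$. Hence if some $x \in G$ satisfies $b(x) \ge 3$, then $|[x,G]| \ge p^3 = |\gamma_2(G)|$ forces $[x,G] = \gamma_2(G) \subseteq \K(G)$, and we are done in this case.

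The interesting case is $b(G) \le 2$, in which every $[x,G]$ has cardinality at most $p^2$ and no single witness can cover $\gamma_2(G)$. Here I would invoke Lemma \ref{prelemma3}: choose a central subgroup $H \le \Z(G) \cap \gamma_2(G)$ of order $p$ and pass to $\bar G := G/H$, whose commutator subgroup has order $p^2$ and exponent $p$. By an analogous (and simpler) breadth analysis for the $|\gamma_2| = p^2$ quotient, or by a further central quotient reducing to Rodney's cyclic case, one obtains $\K(\bar G) = \gamma_2(\bar G)$, and a small family $\bar x_1,\dots,\bar x_n \in \bar G$ with $n \le p+1$ satisfies $\gamma_2(\bar G) = \bigcup_{i=1}^n [\bar x_i, \bar G]$. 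The remaining task is to choose lifts $x_i \in G$ so that $H \subseteq [x_i, G]$ for every $i$ simultaneously; once arranged, Lemma \ref{prelemma3} immediately yields $\gamma_2(G) \subseteq \K(G)$.

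The main obstacle is precisely this simultaneous selection in the $b(G) \le 2$ regime. My approach is to study the natural action of $G/\C_G(\gamma_2(G))$ on $\gamma_2(G) \cong \mathbb{F}_p^3$, using $b(G) \le 2$ to control the possible commutator subsets, and to exploit that exactly $p+1$ of the rank-$2$ subspaces of $\mathbb{F}_p^3$ contain a prescribed line $H$. A counting or pigeonhole argument on the centralizer lattice, combined with the $\mathbb{F}_p[G]$-module structure of $\gamma_2(G)$, should permit the selection of witnesses $x_i$ whose commutator sets each contain $H$ while their images cover $\gamma_2(\bar G)$.
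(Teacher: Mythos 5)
The paper offers no proof of this statement to compare with: it is quoted as Rodney's Theorem B from \cite{dmR77} and used as a black box, so the only question is whether your sketch itself amounts to a proof, and it does not. Your easy case is fine: if some $x$ has $b(x)\ge 3$ then $[x,G]\subseteq\gamma_2(G)$ has $p^{b(x)}\ge p^3$ elements, so $[x,G]=\gamma_2(G)\subseteq\K(G)$. But two genuine gaps remain. First, the opening reduction to finite $p$-groups is not a ``standard Sylow-type argument'': for $P\in\Syl_p(G)$ one can have $\gamma_2(P)$ strictly smaller than $\gamma_2(G)$ (already $S_3$ with $p=3$ has $\gamma_2(P)=1$ while $\gamma_2(G)$ has order $3$), so the $p$-group case does not directly yield the statement for arbitrary finite $G$; the non-nilpotent case needs its own argument (typically coprime action together with the fact that, $\gamma_2(G)$ being abelian, $a\mapsto[a,g]$ is a homomorphism on $\gamma_2(G)$), and you cannot simply discard it.

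Second, and more seriously, the case $b(G)\le 2$ — which is the entire content of the theorem once the breadth-$\ge 3$ case is dispatched — is not carried out. To apply Lemma \ref{prelemma3} you must exhibit $x_1,\dots,x_n$ with $\gamma_2(G)/H=\bigcup_i[x_iH,G/H]$ and $H\subseteq[x_i,G]$ for \emph{every} $i$, and there is a concrete obstruction you do not address: if $b_G(x_i)=2$ while the image set $[x_iH,G/H]$ still has $p^2$ elements, then reduction mod $H$ is injective on $[x_i,G]$, so $[x_i,G]\cap H=\{1\}$ and the hypothesis of Lemma \ref{prelemma3} fails for that witness. Hence each witness must have its breadth drop modulo $H$, and one has to prove, for a suitable choice of $H$, that enough such elements exist to cover the order-$p^2$ quotient (and that $p+1$ covering sets of the required shape exist at all — you assert $n\le p+1$ rather than prove it). This is exactly the ``simultaneous selection'' you name as the main obstacle, and the proposal addresses it only with ``a counting or pigeonhole argument \dots should permit''; no such argument is given, and it is where all the work of Rodney's theorem lies. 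As written, the proposal establishes the result only when some element has breadth at least $3$; for instance the stem groups with $|G:\Z(G)|=p^3$ and $|\gamma_2(G)|=p^3$ (such as the free class-$2$ exponent-$p$ group of rank $3$) fall squarely in the untreated case.
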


Our reduction arguments heavily depend on the following result of Parmeggiani and Stellmacher \cite[Corollary]{PS99}:

\begin{thm}\label{psthm}
Let $p$ be an odd prime and $G$  a finite $p$-group. Then $b(G) = 3$ if and only if one of the following holds:

{\rm (i)} $|\gamma_2(G)| = p^3$ and $|G :  \Z(G)| \ge p^4$.

{\rm (ii)} $|G :  \Z(G)| = p^4$ and $|\gamma_2(G)| \ge p^4$.

{\rm (iii)} $|\gamma_2(G)| = p^4$ and there exists a normal subgroup $H$ of $G$ with $|H| = p$  and $|G/H : \Z(G/H)| = p^3$. 
\end{thm}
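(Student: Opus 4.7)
The plan rests on the identity $p^{b(x)}=|G:\C_G(x)|=|[x,G]|$ together with the two containments $[x,G]\subseteq\gamma_2(G)$ and $\langle x\rangle\Z(G)\subseteq\C_G(x)$, the latter from $[x,x]=1$. For non-central $x$ these combine to give
\[
  b(x)\le \min\bigl\{\log_p|\gamma_2(G)|,\ \log_p|G:\Z(G)|-1\bigr\}.
\]
In particular, $b(G)=3$ automatically forces $|\gamma_2(G)|\ge p^3$ and $|G:\Z(G)|\ge p^4$. A second ingredient is that in a class-$2$ group, for any central line $H\le\gamma_2(G)$ the induced alternating bilinear form $G/\Z(G)\times G/\Z(G)\to\gamma_2(G)/H$ has radical of even codimension in $G/\Z(G)$.

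For the ``if'' direction I would verify each case separately. In (i), $|\gamma_2(G)|=p^3$ caps $b(G)\le 3$, and the opposite inequality requires showing that not every $[x,G]$ is proper in $\gamma_2(G)$; this follows from a covering/dimension argument exploiting $|G:\Z(G)|\ge p^4$. In (ii), $|G:\Z(G)|=p^4$ caps $b(G)\le 3$ via the sharpened bound, and the existence of a breadth-$3$ element follows from a similar covering argument leveraging $|\gamma_2(G)|\ge p^4$. In (iii), $b(G)\le b(G/H)+1\le 4$, and a putative breadth-$4$ element reduces modulo $H$ to a breadth-$3$ element of $G/H$, contradicting $b(G/H)\le\log_p|G/H:\Z(G/H)|-1=2$. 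Conversely, assume $b(G)=3$; then $|\gamma_2(G)|\ge p^3$ and $|G:\Z(G)|\ge p^4$ are automatic. If $|\gamma_2(G)|=p^3$ we are in (i); otherwise $|\gamma_2(G)|\ge p^4$, and if $|G:\Z(G)|=p^4$ we are in (ii). The remaining subcase is $|\gamma_2(G)|\ge p^4$ and $|G:\Z(G)|\ge p^5$, where I must produce the normal subgroup $H$ of (iii).

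To construct $H$, set $Z^H:=\{g\in G:[g,G]\subseteq H\}$, so that $\Z(G/H)=Z^H/H$, and look for a central line $H\le\gamma_2(G)$ with $|G:Z^H|=p^3$. In the class-$2$ case this means choosing $H$ so that the reduced alternating form on $G/\Z(G)$ with values in $\gamma_2(G)/H$ has radical of codimension exactly $3$; the even-codimension observation then forces the geometry to be rigid enough to simultaneously pin $|\gamma_2(G)|=p^4$. The principal obstacle is that in higher nilpotency class the commutator is no longer bilinear, and one must stratify by $\gamma_2(G)\ge\gamma_3(G)\ge\cdots$ and handle each layer separately. I expect that the case separation by nilpotency class (the very split later reflected in the statement of Theorem A) is where the technical bulk of the original Parmeggiani--Stellmacher argument lies, since the pigeonhole on central lines has a clean linear-algebraic incarnation in class $2$ but becomes a filtration argument as soon as $\gamma_3(G)\ne 1$.
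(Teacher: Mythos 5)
The statement you are asked about is not proved in the paper at all: the authors quote it verbatim from Parmeggiani--Stellmacher \cite[Corollary]{PS99} (noting \cite[Corollary 3]{GMMPS} for $p\ge 5$), so your proposal has to stand as a self-contained proof, and as such it has genuine gaps. The elementary part is fine: $p^{b(x)}=|G:\C_G(x)|=|[x,G]|\le|\gamma_2(G)|$, and $\C_G(x)\ge\langle x\rangle\Z(G)$ gives $b(x)\le\log_p|G:\Z(G)|-1$ for non-central $x$; your reduction modulo $H$ also caps $b(G)\le 3$ correctly in case (iii). But every time you need the lower bound $b(G)\ge 3$ you appeal to an unspecified ``covering/dimension argument''. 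What is actually required there is a breadth theorem: $b(G)\le 2$ forces $|\gamma_2(G)|\le p^3$ (Vaughan--Lee's bound $|\gamma_2(G)|\le p^{b(b+1)/2}$), which settles (ii) and (iii), while in case (i), where $|\gamma_2(G)|=p^3$ is still compatible with breadth $2$, one needs the finer characterization that $b(G)=2$ implies $|\gamma_2(G)|=p^2$ or $|G:\Z(G)|=p^3$. These are nontrivial theorems, and nothing in your sketch proves or even cites them.

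More seriously, the necessity direction---the real content of the result---is left undone: in the residual case $|\gamma_2(G)|\ge p^4$, $|G:\Z(G)|\ge p^5$ you must simultaneously produce a normal subgroup $H$ of order $p$ with $|G/H:\Z(G/H)|=p^3$ and show that $|\gamma_2(G)|$ equals $p^4$ exactly, and you explicitly defer this (``the technical bulk of the original Parmeggiani--Stellmacher argument''), which concedes that the proposal is an outline rather than a proof. Moreover, the one structural tool you introduce for this step is false: an alternating bilinear map with values in a space of dimension greater than one need not have radical of even codimension. For instance, the class-$2$ group of order $p^5$ and exponent $p$ generated by $x,y,z$ with $[x,y],[x,z]$ independent central elements and $[y,z]=1$ has $|G:\Z(G)|=p^3$, i.e.\ trivial radical of odd codimension $3$; the same phenomenon occurs for your reduced form into $\gamma_2(G)/H$, which in your situation has order at least $p^3$. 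Indeed, if the even-codimension claim were true it would flatly contradict your own aim of finding a line $H$ for which the radical $Z^H/\Z(G)$ has codimension exactly $3$. So even in nilpotency class $2$ the construction of $H$ is unsupported, and for class at least $3$ --- where, as you note, bilinearity fails --- no argument is offered at all.
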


We remark that the preceding result was also proved in \cite[Corollary 3]{GMMPS} for $p \ge 5$.

The following is the key result which reduces our study mainly to the groups of small orders.
\begin{thm}\label{keyresult}
Let $L$ be a finite $p$-group  such that $\Z(L) \le \gamma_2(L)$, $\gamma_2(L)$ is elementary abelian of order $p^4$ and $b(L) = 3$.   If the nilpotency class of $L$ is $3$ and $p \ge 3$, then one of the following  holds:

{\rm (i)} There exists a $2$-generator subgroup $G$ of $L$ having the same nilpotency class as that of $L$ such that $\gamma_2(G) = \gamma_2(L)$. Moreover, $|G| = p^6$,  and if $|L| \ge p^7$, then $L$ is an amalgamated semidirect product of $G$ and a subgroup $K$ with $|\gamma_2(K)| \le p$.  Moreover, if  $K$ is non-abelian, then it is isoclinic to an extraspecial $p$-group.

{\rm (ii)} There exists a $2$-generator subgroup $G$ of $L$ having the same nilpotency class as that of $L$ such that $\gamma_2(G) < \gamma_2(L)$. Moreover, $|G| = p^5$, $|L| \ge p^7$ and $L$ is a central  product of $G$ and a subgroup $K$ of nilpotency class $2$, which is isoclinic to an extraspecial $p$-group.

{\rm (iii)} There exists a $3$-generator subgroup $G$ of $L$ having the same nilpotency class as that of $L$ such that $\gamma_2(G) = \gamma_2(L)$. Moreover, $|G| = p^7$,  and if $|L| \ge p^8$, then $L$ is an amalgamated semidirect product of $G$ and a subgroup $K$ with $|\gamma_2(K)| \le p$.  Moreover, if  $K$ is non-abelian, then it is isoclinic to an extraspecial $p$-group.

If  the nilpotency class of $L$ is $4$ and $p \ge 3$, then  only {\rm (i)}  holds.
 \end{thm}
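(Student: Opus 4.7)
The plan is to extract from $L$ a small subgroup $G$ whose commutator subgroup contains (or nearly contains) all of $\gamma_2(L)$, and then exhibit $L$ as an amalgamated or central product of $G$ with a complementary piece $K$. Since $b(L) = 3$ and $|\gamma_2(L)| = p^4$, Theorem \ref{psthm} places $L$ in its clause (ii) or (iii), giving quantitative control on either $|L : \Z(L)|$ or on a normal subgroup $H$ of order $p$ with $|L/H : \Z(L/H)| = p^3$. Combined with the standing hypothesis $\Z(L) \le \gamma_2(L)$, Lemma \ref{prelemma1} (which puts $p$-th powers into $\Z(L)$), and Lemma \ref{prelemma2} (which obstructs $\Z(L) \cap \gamma_2(L)$ from being maximal in $\gamma_2(L)$ when the class is $4$), this bounds $|L/\Z(L)|$ and the number of generators of $L$ needed modulo the center.

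Fix an element $x_1 \in L$ of breadth $3$, so that $[x_1, L]$ has cardinality $p^3$ inside the four-dimensional $\mathbb{F}_p$-space $\gamma_2(L)$; let $V$ denote the subgroup it generates. Three configurations arise depending on whether $V = \gamma_2(L)$ or $V$ is a hyperplane, and on whether one further element $x_2 \in L$ already witnesses the longest commutators surviving in $\gamma_2(L)$. In the first configuration, $G := \langle x_1, x_2 \rangle$ has $\gamma_2(G) = \gamma_2(L)$ and, comparing breadths and central quotients via Lemma \ref{prelemma1}, $|G| = p^6$; this is conclusion (i). In the second, the complementary direction of $\gamma_2(L)/V$ is reached only by adjoining a third generator $x_3$, yielding a $3$-generator subgroup $G$ of order $p^7$ with $\gamma_2(G) = \gamma_2(L)$, which gives conclusion (iii). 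The remaining possibility, conclusion (ii), is that no $2$-generator subgroup can achieve $\gamma_2(G) = \gamma_2(L)$ but a $2$-generator $G$ of order $p^5$ captures exactly $V$, the missing direction being supplied later by a centralising factor. In each configuration the nilpotency class of $G$ is checked to coincide with that of $L$ by verifying that the iterated commutator chain $[x_1, x_2], [x_1, x_2, x_1], \ldots$ descends to depth equal to the class of $L$; this is possible because $x_1$ was selected with maximal breadth.

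For conclusions (i) and (iii), when $|L| > |G|$, write $L = G \cdot K$ by taking $K$ to be a complement of $G$ modulo appropriate central data, so that $[K, L] \le \gamma_2(G) = \gamma_2(L)$. The budget $b(L) = 3$, already mostly consumed by $x_1$, forces elements of $K$ to have very small commutator interaction with $L$: a short counting argument shows that if two elements of $K$ contributed independent new commutators outside $\gamma_2(G) \cap K$, then some element of $L$ would acquire breadth at least $4$, a contradiction. Hence $|\gamma_2(K)| \le p$, and when $K$ is non-abelian Corollary \ref{precor} identifies it up to isoclinism with an extraspecial $p$-group. In conclusion (ii), the complement $K$ further satisfies $G \cap K \le \Z(L)$ and $[G, K] = 1$, so the product is central, and the same breadth argument delivers $K$ of class $2$ isoclinic to an extraspecial $p$-group. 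Finally, the class-$4$ statement is proved by ruling out (ii) and (iii): a $2$-generator $p$-group of class $4$ with $|\gamma_2| \le p^3$ is impossible by the rank bounds on the successive factors $\gamma_i/\gamma_{i+1}$, killing (ii); and a $3$-generator subgroup of order $p^7$ with $\gamma_2(G) = \gamma_2(L)$ inside a class-$4$ ambient group whose centre is confined by Lemma \ref{prelemma2} must already exhaust $L$, collapsing (iii) into (i).

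The main obstacle is the construction and control of $K$: exhibiting it as a subgroup of $L$ with the correct intersection with $G$, and squeezing $|\gamma_2(K)| \le p$ purely out of $b(L) = 3$ together with the maximality built into the choice of $x_1$. Equally delicate is verifying that the extracted $G$ inherits the full nilpotency class of $L$ rather than a strictly smaller one; this leans on $\Z(L) \le \gamma_2(L)$, which prevents iterated commutators involving $x_1$ from accidentally collapsing inside $G$. Once these two points are secured, the remaining work is essentially bookkeeping with Theorem \ref{psthm} and Lemmas \ref{prelemma1}--\ref{prelemma3}.
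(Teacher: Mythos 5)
Your overall skeleton (invoke Theorem \ref{psthm}, extract a small subgroup $G$, exhibit $L$ as an amalgamated or central product with a complement $K$) matches the paper, but the two load-bearing steps are not actually carried out, and one is replaced by an argument that fails. First, the bound $|\gamma_2(K)| \le p$: your ``short counting argument'' --- that two independent new commutators coming from $K$ would force some element of $L$ to have breadth at least $4$ --- does not work, because breadth only constrains $|[x,L]|$ for each single element, and independent commutators spread over several elements of $K$ need not concentrate in any one centralizer. The paper gets this bound for free from the structure in Theorem \ref{psthm}(iii): the normal subgroup $H$ of order $p$ with $|L/H : \Z(L/H)| = p^3$ forces all generators except two or three to be central modulo $H$, and $\Z(L) \le \gamma_2(L)$ upgrades this to $[x_i, L] = H$ exactly, whence $\gamma_2(K) \le H$. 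You cite clause (iii) of Theorem \ref{psthm} at the outset but never use $H$ structurally; in the paper the entire trichotomy (i)--(iii) comes from the position of $H$ relative to $\gamma_3(L)$ (namely $H = \gamma_3(L)$, $H < \gamma_3(L)$, or $H \cap \gamma_3(L) = 1$), not from whether $\langle [x_1,L] \rangle$ is a hyperplane. Relatedly, you never verify that $\gamma_2(G) = \gamma_2(L)$ on the nose rather than merely modulo $H$; the paper needs a separate commutator computation (using that each $\C_L(x_i)$ is maximal and contains $\gamma_2(L)$, so a generator of $H \le \gamma_3(L) = [\gamma_2(L), L]$ can be rewritten as a commutator inside $G$) to show $H \le \gamma_2(G)$. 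Your claim that $G$ inherits the full nilpotency class of $L$ ``because $x_1$ has maximal breadth'' is likewise unsubstantiated: maximal breadth does not guarantee that $x_1$ participates in the deepest iterated commutators.

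Second, the class-$4$ statement is argued incorrectly. You kill (ii) by asserting that a $2$-generator $p$-group of class $4$ with $|\gamma_2| \le p^3$ cannot exist, but such groups do exist: groups of maximal class of order $p^5$ are $2$-generator of class $4$ with $|\gamma_2| = p^3$ (and for $p \ge 5$ one can even take $\gamma_2$ elementary abelian), so the rank-bound obstruction you invoke is not there. The correct mechanism, as in the paper, is that when $L$ has class $4$ the subgroup $H$ from Theorem \ref{psthm}(iii) must equal $\gamma_4(L)$ --- otherwise $L/H$ would still have class $4$, impossible since $(L/H)/\Z(L/H)$ has order $p^3$ and hence class at most $2$ --- and then the class-$3$ analysis of $L/H$ leaves only the two-generator configuration (i). Your dismissal of (iii) for class $4$ is similarly hand-waved; the case $|L:\Z(L)| = p^4$ with $L$ itself $3$-generator of order $p^7$ is excluded by Lemma \ref{prelemma2}, but the case governed by $H$ requires the quotient argument just described, which your sketch does not supply.
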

\begin{proof}
Since $b(L) = 3$, it follows from Theorem \ref{psthm} that either $|L:\Z(L)| = p^4$ or $L$ admits a subgroup $H$ of order $p$ such that $|L/H:\Z(L/H)|=p^3$.
If $|L:\Z(L)| = p^4$, then it follows from Lemma \ref{prelemma1} that $L$ itself is either a $2$-generator group of order $p^6$ or a $3$-generator group of order $p^7$. For, $p^2 \le |\Z(L)| \le p^3$; otherwise the nilpotency class of $L$  will at most be $2$, which we are not considering. Moreover, when $L$ is a $3$-generator group of order $p^7$, then it follows from Lemma \ref{prelemma2} that the nilpotency class of $L$ is $3$. Now consider the second case, which we divide into two subcases, depending on the nilpotency class of $L$. 

First assume that the nilpotency class  of $L$ is $3$. Then $H = \gamma_3(L)$, $H < \gamma_3(L)$ or  $H \not\le \gamma_3(L)$. We consider these possibilities one by one. If $H = \gamma_3(L)$, then the nilpotency class of $L/H$ is $2$ and $|L/H:\Z(L/H)|=p^3$, and therefore, using Lemma \ref{prelemma1}, it follows that except three generators $a$, $b$, $c$ (say) of $L$, all other generators $x_1, x_2, \ldots, x_k$, $k \ge 0$, are such that $[x_i, L] \le H$. Since $\Z(L) \le \gamma_2(L)$, we can, more precisely, say that $[x_i, L] = H$ for all $1 \le i \le k$. So it follows that $\gamma_2(L)/H = \gamma_2(G)H/H$ is of order $p^3$, where $G:=\gen{a, b, c}$ is a subgroup of $L$. We claim that $H \le \gamma_2(G)$. As observed above  $[x_i, L] = H$ is of order $p$, it follows that, for all $ 1 \le i \le k$, $\C_L(x_i)$ is maximal in $L$, and therefore contains $\gamma_2(L)$. Thus any generator $h$ of $H$, which lies in $\gamma_3(L) = [\gamma_2(L), L]$,  can be written as 
\[h = [w_1^{\alpha_1}w_2^{\alpha_2}w_3^{\alpha_3}, a^{\beta_1}b^{\beta_2}c^{\beta_3}] \in \gamma_3(G),\]
where $w_1H, w_2H, w_3H$ generate $\gamma_2(G)H/H$. The nice presentation of $h$ in the preceding statement is possible because $H \le \Z(L)$. Hence our claim follows, which, in turn, implies that $\gamma_2(G) = \gamma_2(L)$. 
Thus $G$ is of order $p^7$ and nilpotency class $3$. Let $K := \gen{x_1, \ldots, x_k}$ be a subgroup of $L$. Since $[x_i, L] = H$ for $1 \le i \le k$, we have $\gamma_2(K) \le H$, which shows that the nilpotency class of $K$ is at most $2$.  Since $\gamma_2(L) \le G$, $K$ acts on $G$ by conjugation. Hence  $L$ takes the desired form. If $K$ is non-abelian, then, in view of Corollary \ref{precor}, $K$ is  isoclinic to an extraspecial $p$-group.

If $H <  \gamma_3(L)$, then the nilpotency class of $L/H$ is  $3$ and $|L/H : \Z(L/H)| = p^3$.  Hence by the given hypothesis and Lemma \ref{prelemma1}, we conclude that $L$ can be generated by $\{a, b, x_1, \ldots, x_k\}$ such that $[x_i, L] = H$. Now, using the same arguments as in the preceding case, the assertion follows  by assuming $G := \gen{a, b}$ and $K := \gen{x_1, \ldots, x_k}$, where $|G|= p^6$. Finally, if $H \not\le \gamma_3(L)$, then, obviously,  $H \cap \gamma_3(L) = 1$. Thus the nilpotency class of $L/H$ is also $3$. Again invoking the given hypothesis and Lemma \ref{prelemma1}, we can assume that $L$ is generated by the set $\{a, b, x_1, \ldots, x_k\}$ such that $[x_i, L] = H$. Let $G_1 :=  \gen{a, b}$. Notice that  $\gamma_2(G_1)H/H = \gamma_2(L)/H$ is of order $p^3$, $|G_1H/H| = p^5$, and $G_1$ and $L$ agree on the nilpotency class.  Since $G_1$ is  $2$-generator, $\gamma_2(G_1)/\gamma_3(G_1)$ is cyclic (of order $p$). Thus $G_1$ can not contain $H$, which implies that $|G_1| = p^5$. If $G_1 \le \C_L(x_i)$, for all $1 \le i \le k$, then $K:=\gen{x_1, \ldots, x_k}$ with  $\gamma_2(K) = H$  is isoclinic to an extraspecial $p$-group, and therefore $L$ is a central product of $G_1$ and $K$ amalgamating some subgroup (possibly trivial). Hence $G=G_1$ and $K$ are the desired subgroups. So assume that $[x_i, G_1] \neq 1$ for some $i$. Thus $[x_i, G_1] = H$, and the subgroup $G:=\gen{a, b, x_i}$ of $L$ is of order $p^7$. Hence, as argued above, one can easily see  that $G$ and $K:=\gen{x_1, \ldots, x_{i-1}, x_{i+1}, \ldots, x_k}$ are the desired subgroups of $L$.

We now assume that the nilpotency class of $L$ is $4$. Then either $H = \gamma_4(L)$ or  $H \ne \gamma_4(L)$. 
We claim that there is no $L$ of nilptency class $4$ such that  $H \ne \gamma_4(L)$. If such an $L$ exists, then  the nilpotency class of $L/H$ is  $4$, which is not possible as $(L/H)/\Z(L/H)$, being of order $p^3$,  can have nilpotency class at most $2$.  So assume that $H = \gamma_4(L)$. Then the nilpotency class of $L/H$ is  $3$.  Hence, by the given hypothesis, we conclude that $L$ can be generated by $\{a, b, x_1, \ldots, x_k\}$ such that $[x_i, L] = H$. Since $|L/H : \Z(L/H)| = p^3$, it follows that $a^p, b^p \in \gamma_2(L)$. Now, using the same arguments as above, the assertion follows  by assuming $G := \gen{a, b}$ and $K := \gen{x_1, \ldots, x_k}$, where $|G|= p^6$. This completes the proof of the theorem.   \hfill $\Box$

\end{proof}


\section{groups of class $2$}\label{sec3}

This section is devoted to the investigation of the question under consideration for  groups of class $2$. We say that a finite $p$-group $G$ is of conjugate type  $\{1, p^r, p^s\}$ if this set constitutes the set of  conjugacy class sizes of all  elements of $G$, where $r \le s$ are positive integers.

\begin{lemma} \label{cl2lemma1} 
 Let $G$ be a finite $p$-group of order $p^8$ and nilpotency class $2$  such that $\gamma_2(G)$ is elementary abelian of order $p^4$. If $G$ is not of conjugate type $\{1, p^3\}$ or $\{1, p^2, p^3\}$, then $\K(G) \neq G'$. Moreover, every element of $\gamma_2(G)$ can be written as a product of at most two elements from $\K(G)$.
\end{lemma}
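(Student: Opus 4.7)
The plan is to extract an element $x \in G$ of class size $p$ (forced by excluding the two listed conjugate types) and then to use the Pl\"ucker relations on the induced commutator pairing to exhibit non-commutators in $\gamma_2(G)$. First I observe that in any class-$2$ $p$-group with $\gamma_2(G)$ of exponent $p$ one has $\mho_1(G) \le \Z(G)$, so $\bar G := G/\Z(G)$ is elementary abelian and the commutator descends to a non-degenerate alternating $\mathbb{F}_p$-bilinear form $c: V \times V \to W$, with $V := \bar G$ and $W := \gamma_2(G)$. Since $c$ factors through $\bigwedge^2 V$, a dimension count forces $\dim V = 4$, whence $\Z(G) = \gamma_2(G)$. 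For $g \in G$, the linear map $c(\bar g, \cdot)$ has image $[g, G]$ of size $p^{b(g)}$ and contains $\bar g$ in its kernel, so $b(g) = 4$ would force $\bar g = 0$. Combined with $b(G) \ge 3$ (Remark~\ref{remark}), the class sizes lie in $\{1, p, p^2, p^3\}$ with $p^3$ present; the only types avoiding class size $p$ are $\{1, p^3\}$ and $\{1, p^2, p^3\}$, so the hypothesis yields $x \in G$ with $M := \C_G(x)$ maximal in $G$ and $H := [x, G] = \langle D \rangle$ of order $p$, where $D := [x, y]$ for any chosen $y \notin M$.

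Because $x \in M$, I can extend $\bar x$ to a basis $\{\bar x, \bar u, \bar v\}$ of $\bar M := M/\Z(G)$ (with $u, v \in M$) and adjoin $\bar y$ to obtain a basis of $V$. Setting $A := [u, v]$, $B := [u, y]$, $C := [v, y]$, the matrix of $c$ in this basis has $D$ as its only nonzero entry in the $\bar x$ row, so a direct expansion gives, for $v_1 = \alpha \bar x + \beta \bar u + \gamma \bar v + \delta \bar y$ and $v_2 = \alpha' \bar x + \beta' \bar u + \gamma' \bar v + \delta' \bar y$,
\[
c(v_1, v_2) \;=\; p_{23} A + p_{24} B + p_{34} C + p_{14} D,
\]
where $p_{ij}$ is the $(i,j)$ minor of $\bigl(\begin{smallmatrix}\alpha & \beta & \gamma & \delta \\ \alpha' & \beta' & \gamma' & \delta'\end{smallmatrix}\bigr)$. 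Every commutator is therefore an $\mathbb{F}_p$-combination of $A, B, C, D$; since these must span the $4$-dimensional $W$, they form a basis.

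The Pl\"ucker relation $p_{12}p_{34} - p_{13}p_{24} + p_{14}p_{23} = 0$ then shows that $p_{24} = p_{34} = 0$ forces $p_{14}p_{23} = 0$. Hence for any $a, d \in \mathbb{F}_p^{*}$ the element $[u,v]^a [x,y]^d \in \gamma_2(G)$ (corresponding to $p_{23} = a$, $p_{14} = d$, $p_{24} = p_{34} = 0$) cannot be realized as a single commutator, producing $(p-1)^2$ explicit non-commutators and proving $\K(G) \ne \gamma_2(G)$. The main technical hurdle I anticipate is extracting a rigidity condition from the bare existence of a breadth-$1$ element; the Pl\"ucker identity applied to the $\bar x$-row of $c$ supplies exactly this. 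For the \emph{moreover} clause, $H$ is normal in $G$ of order $p$ with $H \le \gamma_2(G)$ and $\gamma_2(G/H) = \gamma_2(G)/H$ is elementary abelian of order $p^3$, so Theorem~\ref{dmr77} gives $\K(G/H) = \gamma_2(G/H)$, and Lemma~\ref{prelemma5} then implies that every element of $\gamma_2(G)$ is a product of at most two commutators.
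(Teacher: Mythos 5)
Your proposal is correct and follows essentially the same route as the paper: both arguments use the excluded conjugate types together with $b(G)=3$ to produce an element of breadth one, pick generators adapted to its maximal centralizer so that $\gamma_2(G)$ gets the basis $[u,v],[u,y],[v,y],[x,y]$, exhibit a non-commutator of the form $[u,v]^a[x,y]^d$, and deduce the \emph{moreover} clause from Theorem \ref{dmr77} and Lemma \ref{prelemma5}. The only difference is in packaging: where the paper rules out the four coefficient equations by direct elimination, you recognize them as conditions on the Pl\"ucker coordinates $p_{ij}$ and invoke $p_{12}p_{34}-p_{13}p_{24}+p_{14}p_{23}=0$, which settles the claim in one stroke and gives $(p-1)^2$ explicit non-commutators.
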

\begin{proof} It follows by the given hypothesis that  $\Z(G) = \gamma_2(G)$ and $G$ is minimally generated by $4$ elements.  Notice that $b(G)=3$.  Again by the given hypothesis there exists an element $d \in G - \gamma_2(G)$ such that  $\C_G(d)$ is maximal in $G$. We can always extend $\{d\}$ to a generating set $\{a, b, c, d\}$ for $G$ such that $\gamma_2(G)=\langle [a,b], [a,c], [b,c], [c,d]\rangle$.  We claim that $[a,b][c,d]$ is not in $\K(G)$. Contrarily assume that $[a,b][c,d] \in \K(G)$. Thus 
$$[a, b][c, d] = [a^{\alpha_1} b^{\alpha_2} c^{\alpha_3} d^{\alpha_4 } , a^{\beta_1} b^{\beta_2} c^{\beta_3} d^{\beta_4}],$$
 where $\alpha_i, \beta_j \in \mathbb{F}_p$ for $1 \le i, j \le 4$. Expanding the right hand side and comparing the powers of the generators of $\gamma_2(G)$, we get the following set of equations:
\begin{eqnarray}
\beta_2 \alpha_1 - \alpha_2 \beta_1 & = & 1, \label{eqn1} \\     
\beta_3 \alpha_2 - \alpha_3 \beta_2 &=& 0, \label{eqn2}\\
\beta_3 \alpha_1 - \alpha_3 \beta_1 &=& 0, \label{eqn3}\\
\beta_4 \alpha_3 - \alpha_4 \beta_3 &=&1.\label{eqn4}
\end{eqnarray}

First assume that  $\beta_3 \neq 0$. Then from \eqref{eqn2} and \eqref{eqn3}, we get $\alpha_2 = \alpha_3\beta_2{\beta_3}^{-1} $ and $\alpha_1 = \alpha_3\beta_1{\beta_3}^{-1}$. Notice that these values of $\alpha_1$ and $\alpha_2$ contradict \eqref{eqn1}. Thus  $\beta_3=0$. That $\alpha_3 \neq 0$ follows from \eqref{eqn4} after inserting  $\beta_3 = 0$. This then implies, along with  \eqref{eqn2} and \eqref{eqn3}, that $\beta_1=\beta_2=0$, which contradicts \eqref{eqn1}. Hence the above system of equations has no solution, which settles our claim. 

Let $H$ be any subgroup of $\gamma_2(G)$ of order $p$. Then it follows from Theorem \ref{dmr77} that $\gamma_2(G/H) = \K(G/H)$. Hence the  proof is complete by Lemma \ref{prelemma5}.  \hfill $\Box$

\end{proof}

\begin{lemma}  \label{cl2lemma1a}
Let $G$ be a finite $p$-group of order $p^8$,  nilpotency class $2$  and conjugate type $\{1, p^2, p^3\}$ such that $\gamma_2(G)$ is elementary abelian of order $p^4$. Then $\K(G) = \gamma_2(G)$ if and only if $G$ admits a generating set $\{a, b, c, d\}$ such that $[a, b] = 1 = [c, d]$. Moreover, if $\K(G) \ne \gamma_2(G)$, then every element of $\gamma_2(G)$ can be written as a product of at most two elements from $\K(G)$.
\end{lemma}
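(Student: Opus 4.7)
The plan is to prove the two directions of the equivalence separately and then obtain the moreover assertion from the reduction lemmas already in place. Throughout I write $V = G/\Z(G)$, identify $\gamma_2(G)$ with $\Lambda^2 V / K$ for a $2$-dimensional subspace $K \subset \Lambda^2 V$ (the hypothesis $\Z(G) = \gamma_2(G)$ of rank $4$ forces $G$ to be minimally $4$-generated, so $\dim V = 4$), and use that the commutator on the class-$2$ group $G$ factors through an alternating bilinear form $V \times V \to \gamma_2(G)$; for $g \in G$, write $\bar g$ for its image in $V$.

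For the easy direction, suppose $\{a,b,c,d\}$ is a nice generating set. Since $\gamma_2(G)$ has rank $4$, the four commutators $\{[a,c], [a,d], [b,c], [b,d]\}$ form a basis. For an arbitrary $u = [a,c]^{u_1}[a,d]^{u_2}[b,c]^{u_3}[b,d]^{u_4}$ and candidates $x = a^{\alpha_1}b^{\alpha_2}c^{\alpha_3}d^{\alpha_4}$, $y = a^{\beta_1}b^{\beta_2}c^{\beta_3}d^{\beta_4}$, the identity $[x,y]=u$ amounts to $p_{13}=u_1$, $p_{14}=u_2$, $p_{23}=u_3$, $p_{24}=u_4$, where $p_{ij}:=\alpha_i\beta_j - \alpha_j\beta_i$ for $1 \le i < j \le 4$. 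The remaining minors $p_{12}, p_{34}$ are constrained only by the Plücker identity $p_{12} p_{34} = u_1 u_4 - u_2 u_3$, which always has a solution (take $p_{12}=1$, $p_{34} = u_1u_4 - u_2u_3$); the resulting admissible $6$-tuple is then a decomposable element of $\Lambda^2 V$, hence arises from a genuine rank-$2$ matrix, yielding the desired $x, y$.

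For the hard direction I argue the contrapositive: assume no nice generating set exists. For $\bar x \in V$ set $W_{\bar x} = \{\bar v \in V : \bar x \wedge \bar v \in K\}$, so that $|x^G| = p^{4 - \dim W_{\bar x}}$; the conjugate type hypothesis forces $\dim W_{\bar x} \in \{1, 2\}$ for every nonzero $\bar x$, with both values occurring. I classify $K = \langle \omega_1, \omega_2\rangle$ via the binary quadratic form $q(\lambda,\mu) = (\lambda\omega_1 + \mu\omega_2) \wedge (\lambda\omega_1 + \mu\omega_2) \in \Lambda^4 V \cong \mathbb{F}_p$: (i) if $q$ is anisotropic, then $\bar x \wedge \bar v \in K$ forces $\bar v \in \langle\bar x\rangle$ (any $\bar x \wedge \bar v$ is decomposable), so $\dim W_{\bar x} = 1$ always, contradicting the hypothesis; (ii) if $q \equiv 0$, a short computation shows $K = w \wedge V'$ for some $w \in V$ and $2$-plane $V' \ni w$, whence $\dim W_w \ge 3$, again contradicting the hypothesis; (iii) if $q$ has two distinct isotropic directions, $K$ contains two independent decomposables $\bar a\wedge\bar b$ and $\bar c\wedge\bar d$, and $\langle\bar a,\bar b\rangle\cap\langle\bar c,\bar d\rangle \ne 0$ would reduce to case~(ii), so these $2$-planes are complementary and $\{a,b,c,d\}$ is a nice generating set, contradicting our standing assumption. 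The only remaining case is that $q$ has a unique isotropic direction, equivalently $\omega_1\wedge\omega_1=\omega_1\wedge\omega_2=0$ and $\omega_2\wedge\omega_2\ne 0$. Pick $a, b \in G$ so that $\bar a \wedge \bar b$ spans that isotropic direction, extend to a generating set $\{a,b,c,d\}$, and apply a $\mathrm{GL}_2$-change of coordinates within $\langle \bar c, \bar d\rangle$ to bring $\omega_2$ into the normal form $\bar a\wedge\bar c + \bar b\wedge\bar d$; then $[a,b]=1$ and $[a,c][b,d]=1$ in $G$, and $\{[a,c], [a,d], [b,c], [c,d]\}$ is a basis of $\gamma_2(G)$.

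In this normal form, take any quadratic non-residue $n \in \mathbb{F}_p^{\times}$ (which exists as $p \ge 3$) and put $u = [a,d][b,c]^n$. Using $[b,d] = [a,c]^{-1}$, the relation $u=[x,y]$ translates to $p_{13}-p_{24}=0$, $p_{14}=1$, $p_{23}=n$, $p_{34}=0$; the Plücker identity then collapses to $p_{24}^{2}=n$, which has no solution in $\mathbb{F}_p$. Hence $u\notin\K(G)$, completing the hard direction. For the moreover statement, if $\K(G)\ne\gamma_2(G)$, pick any subgroup $H\le\gamma_2(G)$ of order $p$; then $\gamma_2(G/H)$ is elementary abelian of order $p^3$, so Theorem~\ref{dmr77} gives $\K(G/H)=\gamma_2(G/H)$, and Lemma~\ref{prelemma5} yields the desired statement. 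The main obstacle I anticipate is the normalization step bringing the second generator of $K$ into standard form; once that is in place, the non-residue trick produces the required non-commutator directly.
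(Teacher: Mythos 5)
Your proposal is correct, and it reaches the conclusion by a genuinely different route from the paper. The paper works entirely with explicit generators: for the hard direction it normalizes a generating set by successive substitutions (forcing $t_3=0$, then $t_4t_2-t_1\neq 0$) until it reaches a set $\{a',b,c',d\}$ with $[a',c']=1$ and $[a',b]^{-1}=[c',d]$, and then shows by row-reducing an explicit linear system that $[b,c']^{\lambda}[a',d]^{\mu}$ is not a commutator when $-\lambda\mu^{-1}$ is a non-residue. You instead encode everything in $V=G/\Z(G)$ and the $2$-dimensional kernel $K\subset\Lambda^2V$ of the commutator map, classify $K$ by the restriction of the Pl\"ucker quadratic form $\omega\mapsto\omega\wedge\omega$, use the conjugate-type hypothesis to exclude the anisotropic and totally isotropic cases (via the centralizer dimension $\dim W_{\bar x}$), identify the two-isotropic-directions case with the existence of a nice generating set, and in the remaining rank-one case produce the same kind of non-residue obstruction, now read off directly from the Pl\"ucker relation ($p_{24}^2=n$); your treatment of the easy direction via decomposability of any bivector satisfying the Pl\"ucker quadric is also cleaner and more uniform than the paper's ad hoc ansatz. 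What your approach buys is conceptual transparency: it makes explicit exactly where the conjugate-type hypothesis enters and why the quadratic-residue obstruction is the only possible one; what the paper's approach buys is that it stays elementary and transfers almost verbatim to $p=2$ (its Lemma on $2$-groups of conjugate type $\{1,4,8\}$), whereas your non-residue step, like the paper's, implicitly assumes $p$ odd (consistent with the paper's intended use in Theorem A). Two cosmetic points you should tighten: in your case (ii) the common-factor description should read $K=w\wedge U$ for a $2$-dimensional $U$ with $w\notin U$ (equivalently $K\subseteq w\wedge V'$ with $V'=\langle w\rangle+U$ a $3$-space), not a $2$-plane containing $w$; and the normalization of $\omega_2$ requires, besides the $\mathrm{GL}_2$-change inside $\langle\bar c,\bar d\rangle$, first replacing $\omega_2$ by $\omega_2-\alpha\omega_1$ to kill its $\bar a\wedge\bar b$-component (legitimate since only $K$ matters, and $\omega_1\wedge\omega_2=0$ kills the $\bar c\wedge\bar d$-component) — both fixes are immediate and do not affect the argument.
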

\begin{proof}
Let $G$ admit a generating set $\{a, b, c, d\}$ such that $[a, b] = 1 = [c, d]$. Then
 $\gamma_2(G)=\langle [a, c], [b, c], [a, d], [b,d] \rangle$.
Given $\varepsilon, \lambda, \mu, \eta \in \mathbb{F}_p$, we are going to show that 
\begin{equation}\label{cls2eqn0}
[a, c]^{\varepsilon}[b, c]^{ \lambda } [a, d]^{\mu} [b,
d]^{\eta} = [a^{\alpha_1} b^{\alpha_2} c, a^{\beta_1}  b^{\beta_2}
d^{\beta_4}]
\end{equation}
for some  $\alpha_i,  \beta_j \in \mathbb{F}_p$. We go in the reverse way. Expanding the right hand side of \eqref{cls2eqn0} and comparing powers on both sides, we get
$$ \beta_1   =  -\varepsilon, \; \beta_2  = -\lambda, \; \alpha_1 \beta_4   = \mu, \; \alpha_2 \beta_4  = \eta.$$
It is easy to see that for given $\varepsilon, \lambda, \mu, \eta \in \mathbb{F}_p$, we can always find some $\alpha_i, \beta_j \in \mathbb{F}_p$ such that \eqref{cls2eqn0} holds true. Hence $\K(G) = \gamma_2(G)$.

We provide a contrapositive proof of the converse part. We assume that $G$ admits no generating set $\{x_1, x_2, x_3, x_4\}$ such that  $[x_1, x_2] = 1 = [x_3, x_4]$.  By the given hypothesis, we can always choose a generating set $\{a, b, c, d\}$ for $G$ such that  $[a, c] = 1$ and none of the other basic commutators of weight two in generators is trivial. So we can assume that
\begin{equation*}\label{cls2eqn1}
[c, d]= [a, b]^{t_1} [b, c]^{t_2} [b, d]^{t_3} [a,d]^{t_4},
\end{equation*}
 where $t_i \in \mathbb{F}_p$ and $1 \le i \le 4$, and therefore
 $\gamma_2(G)= \langle [a, b], [b, c], [b, d], [a,d] \rangle$. 

We can write the preceding equation as
\begin{equation}\label{cls2eqn1}
[b, a^{-t_1} c^{t_2} d^{t_3}][d, a^{-t_4}c]= 1.
\end{equation}
We claim that $t_3 = 0$.  If $t_3 \neq 0$, then replacing $d$ by $d' := a^{-t_1} c^{t_2}
d^{t_3}$ and a proper substitution reduces \eqref{cls2eqn1}  to $[d', b^{-1}
a^{-t_4 t_3^{-1}} c^{t_3^{-1}}]=1$. Now replacing $b$ by  $b' := b^{-1} a^{-t_4 t_3^{-1}} c^{t_3^{-1}}$, we
get a  generating set $\{ a, b',c,d' \}$ for $G$ such that $\gamma_2(G)= \langle
[a, b'], [b', c], [b', d'], [c,d'] \rangle$ and $[a, c]=1=[b', d']$, which contradicts our hypothesis. Our claim is now settled.

So now onwards we assume that  $t_3=0$.  Hence  \eqref{cls2eqn1} reduces to $[b, a^{-t_1}
c^{t_2} ][d, c a^{-t_4}]= 1$. Now replace $c$ by $c' := c a^{-t_4} $. A simple computation gives
 $$[b, a^{-t_1} c^{t_2} ] = [b, a^{-t_1} (c'a^{t_4})^{t_2} ]= [b, a^{t_4 t_2 - t_1} c'^{t_2} ].$$ 
Thus \eqref{cls2eqn1} reduces to 
\begin{equation}\label{cls2eqn1a}
[b, a^{t_4t_2 - t_1} c'^{t_2}][d, c'] = 1.
\end{equation}

We claim that $t_4 t_2 - t_1 \ne 0$.  Contrarily assume that $t_4 t_2 - t_1 = 0$. Then \eqref{cls2eqn1a} takes the form $[b,  c'^{t_2}][d, c'] = 1,$ which gives $[b^{t_2}d, c']  = 1$. Now replacing $d $ by $d' := b^{t_2}d$, we get a generating set $\{a, b, c', d' \}$ of $G$ such that $[a, c'] = 1 = [d', c']$,  which gives that the size of the conjugacy class of $c'$ in $G$ is $p$,  a contradiction to the given hypothesis. This settles our claim.
 
So we now assume  $t_4 t_2- t_1 \neq 0$. Then replacing $a$ by $a' := a^{t_4 t_2 - t_1} c'^{t_2}$, we get a new generating set $\{ a', b, c', d \}$ such that $\gamma_2(G)= \langle [a', b], [b, c'], [b, d], [a',d] \rangle$, 
$[a', c']=1$ and, by \eqref{cls2eqn1a}, $[a', b]^{-1} = [c', d]$.  Now we claim that $[b, c']^{ \lambda } [a', d]^{\mu} \notin  \K(G)$ for some $\lambda, \mu \in \mathbb{F}_p^*$. Contrarily assume that $[b, c']^{\lambda } [a', d]^{\mu} \in \K(G)$ for all $\lambda, \mu \in \mathbb{F}_p^*$. Thus
$$[b, c']^{ \lambda } [a', d]^{\mu} = [a'^{\alpha_1} b^{\alpha_2}
c'^{\alpha_3} d^{\alpha_4 } , a'^{\beta_1} b^{\beta_2} c'^{\beta_3}
d^{\beta_4}],$$
 where  $\alpha_i,  \beta_j \in
\mathbb{F}_p$ for $1 \le i, j \le 4$. Expanding the right hand side and  comparing powers both side, we get
\begin{eqnarray}
\alpha_1 \beta_4 - \alpha_4 \beta_1 &=& \mu,\label{eqn16}\\
\alpha_2 \beta_4  - \alpha_4 \beta_2 &=& 0, \label{eqn15}\\
\alpha_2 \beta_3  - \alpha_3 \beta_2   &=& \lambda, \label{eqn14}\\
\alpha_1 \beta_2  - \alpha_2 \beta_1 - \alpha_3 \beta_4  + \alpha_4
\beta_3 & = & 0. \label{eqn13} 
\end{eqnarray}

First assume that $\alpha_2 = 0$. Then from \eqref{eqn14} $\alpha_3
\beta_2 = - \lambda$, therefore by \eqref{eqn15} $\alpha_4 = 0$. Now by
\eqref{eqn16} $\alpha_1 \beta_4 = \mu$. Since $\alpha_1$ and $\alpha_3$ are non-zero, by substituting the value of
$\beta_2 = -\lambda \alpha_3^{-1}$ and $\beta_4 = \mu \alpha_1^{-1}$ in
\eqref{eqn13}, we get $ \lambda \alpha_1  \alpha_3^{-1} +  \mu \alpha_3 \alpha_1^{-1}  = 0$. Thus $ (\alpha_1 \alpha_3^{-1})^2 = - \lambda^{-1} \mu$, which is a contradiction because 
we can choose $\lambda, \mu \in \mathbb{F}_p^*$ such that $ - \lambda^{-1}
\mu$ is  non square. Now we assume that $\alpha_2 \neq 0$. If $\alpha_4 = 0$, then \eqref{eqn15} 
implies that $\beta_4 =0$, which contradicts \eqref{eqn16}. So finally assume that both $\alpha_2$ and $\alpha_4$ are non zero.  The augmented matrix of above system of equations, with $\beta$'s as variables,  is given by

\[
M=
  \begin{bmatrix}
    -\alpha_4 & 0 & 0 & \alpha_1 & \mu  \\
    0 & -\alpha_4 & 0 & \alpha_2 & 0 \\
    0 & -\alpha_3 & \alpha_2 & 0 & \lambda \\
    -\alpha_2 & \alpha_1 & \alpha_4 & -\alpha_3 & 0
  \end{bmatrix}.
\]
Performing row operations

(i) $R_1 \to -\alpha_4^{-1} R_1, \;  R_2 \to -\alpha_4^{-1} R_2$,

(ii) $R_4 \to R_4 + \alpha_2 R_1, \; R_3 \to R_3 + \alpha_3 R_2$,

(iii) $R_3 \to \alpha_2^{-1} R_3, \; R_4 \to R_4 - \alpha_1 R_2$,

(iv) $R_4 \to R_4 - \alpha_4 R_3$,\\
we get
\[
M_1=
  \begin{bmatrix}
    1 & 0 & 0 & -\alpha_1 \alpha_4^{-1} & -\mu \alpha_4^{-1} \\
    0 & 1 & 0 & - \alpha_2 \alpha_4^{-1} & 0 \\
    0 & 0 & 1 &  -\alpha_3 \alpha_4^{-1} & \lambda \alpha_2^{-1} \\
    0 & 0 & 0 & 0 & -\mu \alpha_2 \alpha_4^{-1} - \lambda \alpha_2^{-1}
\alpha_4
  \end{bmatrix}.
\]

If the above system of equations admits a solution, then  $\mu \alpha_2 \alpha_4^{-1} + \lambda \alpha_2^{-1} \alpha_4 =0$. This gives  $(\alpha_2 \alpha_4^{-1})^2  =  - \lambda \mu^{-1}$, which is a contradiction again, because we can
choose $\lambda, \mu \in \mathbb{F}_p^*$ such that $-\lambda \mu^{-1}$ is non
square.  The final assertion holds by taking $G/H$, where $H$ is any subgroup  of  $\gamma_2(G)$ of order $p$, and then  using Theorem \ref{dmr77} and Lemma \ref{prelemma5}. The proof of the lemma is now complete. \hfill $\Box$

\end{proof}

 \begin{lemma} \label{cl2lemma1b} 
 Let $G$ be a finite $p$-group of order $p^8$,  nilpotency class $2$  and conjugate type $\{1,  p^3\}$ such that $\gamma_2(G)$ is elementary abelian of order $p^4$. Then $\K(G) = \gamma_2(G)$.
 \end{lemma}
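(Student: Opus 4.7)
The plan is to show that the hyperplanes $\{[x,G] : x \in G \setminus Z(G)\}$ of $\gamma_2(G)$ cover $\gamma_2(G)$, which is equivalent to $\K(G) = \gamma_2(G)$. Since $G$ is a stem group with conjugate type $\{1, p^3\}$, $Z(G) = \gamma_2(G)$ has order $p^4$ and $\bar G := G/Z(G)$ is elementary abelian of order $p^4$, with $\C_{\bar G}(\bar x) = \langle \bar x \rangle$ for every non-zero $\bar x$. Viewing the commutator as an alternating bilinear form $\omega \colon \bar G \times \bar G \to \gamma_2(G)$ between two $4$-dimensional $\mathbb{F}_p$-spaces, this strong non-degeneracy says $\bar x^\perp = \langle \bar x \rangle$ for every non-zero $\bar x$, and each $[x,G] = \omega(\bar x, \bar G)$ is a hyperplane of $\gamma_2(G)$.

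First I would determine the fibres of the map $\bar x \mapsto [x,G]$. For each hyperplane $H \leq \gamma_2(G)$ with defining functional $f_H \in \gamma_2(G)^*$, the set $W_H := \{\bar x : [x,G] \subseteq H\}$ is precisely the radical of the $\mathbb{F}_p$-valued alternating form $f_H \circ \omega$ on $\bar G \cong \mathbb{F}_p^4$. Such a radical has even codimension, so $\dim W_H \in \{0, 2, 4\}$; the case $\dim W_H = 4$ is excluded because it would force $f_H$ to vanish on all commutators, hence on all of $\gamma_2(G)$. The count $\sum_H (p^{\dim W_H} - 1) = |\bar G| - 1 = p^4 - 1$ then forces exactly $p^2 + 1$ hyperplanes $H_1, \ldots, H_{p^2+1}$ with $\dim W_{H_i} = 2$; writing $W_i := W_{H_i}$, a non-zero vector in $W_i \cap W_j$ would force $H_i = H_j$, so the $W_i$'s intersect pairwise trivially and together partition $\bar G \setminus \{0\}$, forming a \emph{spread} of $\bar G$.

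The main obstacle is then the covering statement $\bigcup_i H_i = \gamma_2(G)$. Dually, the $p^2+1$ points $L_i := H_i^\perp$ in $\mathbb{P}(\gamma_2(G)^*) \cong \mathbb{P}^3(\mathbb{F}_p)$ must meet every plane; under the Klein correspondence, the spread of $\bar G$ is dual to an \emph{elliptic quadric} in $\mathbb{P}^3(\mathbb{F}_p)$ with point set $\{L_i\}$, and every plane meets such a quadric (the intersection $\Pi \cap E$ is a ternary quadratic form, which has a rational zero by Chevalley--Warning for $p \geq 3$). A more elementary alternative is to use the $\mathbb{F}_{p^2}$-vector-space structure on $\bar G$ induced by the spread and, in a basis of $\gamma_2(G)$ adapted to a fixed $W_i$, reduce the condition $z \in \bigcup_i H_i$ to solvability of an equation of the form $u^2 - t v^2 = C$ with $t$ a quadratic non-residue modulo $p$, which always has a solution $(u, v) \in \mathbb{F}_p^2$ by surjectivity of the norm map $N \colon \mathbb{F}_{p^2}^* \to \mathbb{F}_p^*$. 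Either approach concludes: every $z \in \gamma_2(G)$ lies in some $H_i = [x, G]$, hence $z = [x, g]$ for an appropriate $g$, and $\K(G) = \gamma_2(G)$.
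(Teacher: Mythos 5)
Your reduction of the lemma to a covering statement is sound: here $\Z(G)=\gamma_2(G)$, $\bar G:=G/\Z(G)$ is elementary abelian of order $p^4$ (class $2$ plus $\gamma_2(G)$ elementary abelian gives $x^p\in\Z(G)$, and $|\Z(G)|>p^4$ would force $|\gamma_2(G)|\le p^3$), each $[x,G]$ is a hyperplane of $\gamma_2(G)$, the sets $W_H$ are radicals of the alternating forms $f_H\circ\omega$ and so have even codimension, and your count does yield exactly $p^2+1$ hyperplanes $H_1,\dots,H_{p^2+1}$ whose fibres $W_i$ partition $\bar G\setminus\{0\}$, i.e.\ a line spread. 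The genuine gap is the next step: the assertion that ``under the Klein correspondence the spread is dual to an elliptic quadric with point set $\{L_i\}$'' is not a citable fact, and it is the heart of the proof. The Klein image of an arbitrary spread of $\mathbb{P}^3(\mathbb{F}_p)$ is merely an ovoid of the Klein quadric; it is a $3$-space section (an elliptic quadric) precisely when the spread is \emph{regular}, and non-regular spreads exist for every $p\ge 3$ (Hall spreads). So you must prove that the spread arising here is regular, equivalently that the $p^2+1$ points $L_i=H_i^{\perp}$ are the zero set of a quadratic form, and nothing in your write-up does this; the ``more elementary alternative'' suffers from the same circularity, since an $\mathbb{F}_{p^2}$-vector-space structure ``induced by the spread'' again presupposes regularity. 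Without that step, $p^2+1$ points of $\mathbb{P}^3(\mathbb{F}_p)$ need not meet every plane, so the covering does not follow.

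The missing step can be supplied, and then your norm equation appears with a proof attached. Factor the commutator map through the linear map $\phi:\Lambda^2\bar G\to\gamma_2(G)$ and set $K=\ker\phi$, of dimension $2$. Since every element of $\bar x\wedge\bar G$ is decomposable, conjugate type $\{1,p^3\}$ is equivalent to $K$ containing no nonzero decomposable vector, i.e.\ to the Klein quadratic form $q(\eta):=\eta\wedge\eta\in\Lambda^4\bar G\cong\mathbb{F}_p$ being anisotropic on $K$; for $p$ odd this also forces the associated bilinear form $b$ to be nondegenerate on $K$. Given $0\ne z\in\gamma_2(G)$, choose $\eta_0$ with $\phi(\eta_0)=z$; then $q(\eta_0+k)=q(k)+b(\eta_0,k)+q(\eta_0)$ for $k\in K$, and after completing the square (possible since $b|_K$ is nondegenerate) the condition $q(\eta_0+k)=0$ becomes $q(k+k_0)=c$ for a fixed $k_0\in K$, $c\in\mathbb{F}_p$. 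An anisotropic binary form over $\mathbb{F}_p$ is a norm form $u^2-tv^2$ and hence universal, so some $\eta\in\eta_0+K$ is decomposable, say $\eta=\bar x\wedge\bar y$, and $z=[x,y]$. (The same computation identifies $\{L_i\}$ with $Q^+\cap\mathbb{P}(K^{\perp})$, an elliptic quadric, so your dual picture is correct once this is in place.) Note that this is a genuinely different route from the paper, which instead invokes the classification of groups of conjugate type $\{1,p^3\}$ from \cite{NY18} to reduce to a single explicit exponent-$p$ presentation and then checks surjectivity by row reduction and a discriminant argument; your approach trades the classification for the anisotropy argument above, which you still need to write down.
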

 \begin{proof} 
 Using \cite[Lemma 3.14]{NY18}, we can assume that the exponent of $G$ is $p$. Now, it follows from \cite[Theorem 1.2]{NY18} that $G$ has the following presentation:
\[G = \gen{a, b, c, d \mid a^p = b^p = c^p = d^p =1,  [c, d]= [a, b]^{-1}, [a, c] = [b,  d]^{-r}, [[x, y], z] =1}\]
 where $x, y, z \in \{a, b, c, d\}$ and $r$ is any non-quadratic residue mod $p$.
 We'll show that for given $\lambda, \mu, \nu, \xi \in \mathbb{F}_p$, there exist  $\alpha_i, \ \beta_i \in \mathbb{F}_p$ such that
 $$[a, b]^{\lambda} [b, c]^{\mu} [b, d]^{\nu} [a, d]^{\xi} = [a^{\alpha_1} b^{\alpha_2} c^{\alpha_3} , \  a^{\beta_1} b^{\beta_2} c^{\beta_3} d^{\beta_4}].$$

Solving both sides of the preceding equation and comparing the powers, we get
\begin{eqnarray}
\alpha_1 \beta_2 - \alpha_2 \beta_1 - \alpha_3 \beta_4   &=& \lambda,
\label{eqn17}\\
\alpha_2 \beta_3  - \alpha_3 \beta_2 &=& \mu , \label{eqn18}\\
\alpha_2 \beta_4 -r (\alpha_1 \beta_3  - \alpha_3 \beta_1)   &=& \nu,
\label{eqn19}\\
\alpha_1 \beta_4   & = & \xi. \label{eqn20}
\end{eqnarray}
It is sufficient to show that this system of equations, $\beta_i$'s as variables, admits a solution.
The augmented matrix for this system of equations is as follows:
\[
M=
  \begin{bmatrix}
    -\alpha_2 & \alpha_1 & 0 & -\alpha_3 & \lambda  \\
    0 & -\alpha_3 & \alpha_2 & 0 & \mu  \\
    r \alpha_3 & 0 & -r \alpha_1 & \alpha_2 & \nu \\
    0 & 0 & 0 & \alpha_1 & \xi
  \end{bmatrix}.
\]
Performing row operations

(i) $R_3 \to \alpha_2 R_3$, (ii) $R_3 \to R_3 + r \alpha_3 R_1$ and (iii) $R_3 \to R_3 + r \alpha_1  R_2$,\\
the above matrix transforms to
\[
M_1=
  \begin{bmatrix}
    -\alpha_2 & \alpha_1  & 0 &  -\alpha_3 & \lambda  \\
    0 & -\alpha_3 & \alpha_2 & 0 & \mu \\
    0 & 0 & 0 &  \alpha_2^2 - r  \alpha_3^2 & \nu \alpha_2 +r \lambda
\alpha_3 + r \mu \alpha_1  \\
    0 & 0 & 0 & \alpha_1 & \xi
\end{bmatrix}.
\]
It is easy to see that the above system of equations admits a solution only if 
 $$\xi (\alpha_2^2 - r  \alpha_3^2 ) - \alpha_1 ( \nu \alpha_2 + r \lambda  \alpha_3 + r \mu \alpha_1  ) =0,$$
or equivalenty
 $$r \mu \alpha_1^2 + (\nu \alpha_2 + r \lambda \alpha_3) \alpha_1 - \xi(\alpha_2^2 - r \alpha_3^2) = 0.$$ 
 
Viewing the preceding equation as a quadratic equation in $\alpha_1$, notice that it has a solution in $\mathbb{F}_p$ if its  discriminant 
is zero or a quadratic residue mod $p$. The discriminant, after an easy computation, takes the form
 \begin{equation}\label{eqn21}
 (\nu^2 + 4r\mu \xi) \alpha_2^2 + 2r\lambda \nu \alpha_2 \alpha_3 +
r^2(\lambda^2 - 4 \mu \xi ) \alpha_3^2.
 \end{equation}
Notice that \eqref{eqn21} is of the form $f \alpha_2^2 + g \alpha_2 \alpha_3 + h \alpha_3^2$,
where $f,g,h \in \mathbb{F}_p$. As we know, there exist $\alpha_2, \alpha_3 \in \mathbb{F}_p$ such that \eqref{eqn21} is either zero or  a quadratic residue mod $p$. This completes the proof.  \hfill $\Box$

 \end{proof}
 
\begin{lemma} \label{cl2lemma2} 
Let $G$ be a  finite $p$-group of nilpotency class $2$ and order at least  $p^9$, $p \ge 3$,  such that $\gamma_2(G)$ is elementary abelian of order $p^4$ and $\Z(G) = \gamma_2(G)$. Then $\K(G)=\gamma_2(G)$. 
\end{lemma}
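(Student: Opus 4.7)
Let $V := G/\Z(G)$ and $W := \gamma_2(G)$. By Lemma \ref{prelemma1}, $V$ is elementary abelian of dimension $n \geq 5$, and $W$ is elementary abelian of dimension $4$; commutation is the alternating bilinear map $c : V \times V \to W$. Since $G$ has class $2$, each $[x, G]$ is a subgroup of $W$, so $b(G) \leq 4$, and Remark \ref{remark} gives $b(G) \geq 3$. If $b(G) = 4$, some $x$ satisfies $[x, G] = W \subseteq \K(G)$ and we are done; so assume $b(G) = 3$.

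The hypotheses $|G : \Z(G)| \geq p^5$ and $|\gamma_2(G)| = p^4$ exclude cases (i)--(ii) of Theorem \ref{psthm}, leaving case (iii): there is a normal subgroup $H \leq \gamma_2(G)$ of order $p$ with $|G/H : \Z(G/H)| = p^3$. Let $\Z^*$ be the preimage of $\Z(G/H)$; the hypothesis $|G| \geq p^9$ gives $|\Z^*/\Z(G)| = |G|/p^7 \geq p^2$, which is the key leverage. Each $z \in \Z^* \setminus \Z(G)$ satisfies $[z, G] = H$, whence $[\Z^*, G] = H \subseteq \K(G)$ already. By Rodney's Theorem \ref{dmr77} applied to $G/H$, we have $\K(G/H) = \gamma_2(G/H)$.

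The strategy is to apply Lemma \ref{prelemma3}: produce $x_1, \ldots, x_n \in G$ with $H \leq \bigcap_i [x_i, G]$ whose cosets cover $\gamma_2(G/H)$ under $\bar{x}_i \mapsto [x_i H, G/H]$. Rodney's cover provides candidate classes $\bar{x}_i \in G/H$; since $[xzH, G/H] = [xH, G/H]$ for $z \in \Z^*$ (because $zH \in \Z(G/H)$), we are free to adjust each lift by $\Z^*$ without disturbing its image in $G/H$. The crucial analytic step is: for $x \in G$ and $z \in \Z^* \setminus \Z(G)$, the homomorphism $\phi_z : g \mapsto ([x, g], [z, g]) \in [x, G] \oplus H$ is surjective precisely when $\C_{G}(x) \not\leq \C_{G}(z)$, in which case $[xz, G] = [x, G] + H$ has dimension $\dim [x, G] + 1$. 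For $\dim [x, G] \leq 2$, choosing such a $z$ yields $H \leq [xz, G]$ while keeping $b(xz) \leq 3$.

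The main obstacle is the residual case $\dim [x, G] = 3$ with $H \not\leq [x, G]$, which forces $\C_{G}(x) \leq \C_{G}(\Z^*)$ for every $z \in \Z^* \setminus \Z(G)$. I would resolve this by splitting on whether $\Z^*$ is abelian. If $\Z^*$ is non-abelian, then $H = [\Z^*, \Z^*]$; setting $N := \C_G(\Z^*)$, the size-count forces $G = N \cdot \Z^*$, the commutator subgroup $\gamma_2(N) \leq W$ is elementary abelian of order at most $p^3$ (so by Rodney $\K(N) = \gamma_2(N)$), and the identity
\[[uz, u'z'] = [u, u'][z, z'] \quad \text{for } u, u' \in N,\ z, z' \in \Z^*\]
(valid because $N$ and $\Z^*$ commute) realizes every element of $W = \gamma_2(N) + H$ as a single commutator in $G$. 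If $\Z^*$ is abelian, the inclusion $\Z^* \leq \C_G(\Z^*)$ together with $|\C_G(x)| = |G|/p^3 = |\Z^*|$ traps $x$ in a tightly-constrained subgroup of $\C_G(\Z^*)$, and a parallel analysis on $\C_G(\Z^*)$ using the same centralizer-product identity either rules out the residual configuration or directly covers the missing elements of $W$. In either case the covering hypothesis of Lemma \ref{prelemma3} is verified, whence $\gamma_2(G) \subseteq \K(G)$.
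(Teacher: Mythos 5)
Your opening reductions are sound and match the paper's: $b(G)=3$ via Remark \ref{remark} and Theorem \ref{psthm}, the subgroup $H$ of order $p$, and the observation that every $z$ in the preimage $\Z^*$ of $\Z(G/H)$ satisfies $[z,G]=H$; your centralizer dichotomy ($\phi_z$ surjective iff $\C_G(x)\not\le\C_G(z)$, whence $[xz,G]=[x,G]H$) is also correct. But from that point on the proposal is a plan rather than a proof, and the unresolved cases are exactly the hard part. First, even when $\dim[x,G]\le 2$ and $H\not\le[x,G]$, you simply assert that a suitable $z$ exists; it need not, since nothing you have said excludes $\C_G(x)\le\C_G(\Z^*)$ (one checks this can only occur with $|G|=p^9$ and $\C_G(x)=\C_G(\Z^*)$ of index $p^2$, but ruling it out, or replacing the lift by a different representative of Rodney's cover, requires an argument you do not give). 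Second, in the residual case the key structural claims are unjustified: the ``size-count'' $G=N\Z^*$ does not follow from what you state ($|N|\ge|G|/p^3$, $|\Z^*|=|G|/p^3$ and $N\cap\Z^*=\Z(\Z^*)$ only give $|N\Z^*|\ge|G|/p$ without further input), and the assertion $|\gamma_2(N)|\le p^3$ is not proved — the decomposition only yields $\gamma_2(N)H=\gamma_2(G)$, so a priori $\gamma_2(N)$ could be all of $\gamma_2(G)$, in which case invoking Rodney is circular. Finally, the abelian-$\Z^*$ branch is disposed of with ``a parallel analysis \dots either rules out the residual configuration or directly covers the missing elements,'' which is a statement of intent, not an argument.

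For comparison, the paper avoids these difficulties by working with an explicit generating set $\{a,b,c,x_1,\dots,x_k\}$ with $[x_i,G]=H$: if $S=\gen{a,b,c}$ centralizes all $x_i$, then $G$ is a central product and Lemma \ref{prelemma4} applies (here $|\gamma_2(S)|\le p^3$ is automatic because $\gamma_2(G)/H$ is generated by the three commutators of $a,b,c$); otherwise one of the $x_i$, renamed $d$, has $[d,S]=H$, the generators are normalized so that $\gamma_2(G)=\gen{[a,b],[a,c],[b,c],[c,d]}$ with $[a,d]=[b,d]=1$, and explicit identities express every element of $\gamma_2(G)$ as a single commutator, the only delicate elements $[a,b]^{\alpha_1}[c,d]^{\alpha_4}$ being handled by a short case analysis on which of $a,b,d$ or the remaining $x_j$ fail to centralize $A=\gen{x_2,\dots,x_k}$. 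If you want to salvage your approach, you would need to carry out an analysis of this concreteness in your two residual branches; as written, the proposal has genuine gaps precisely where the proof has to do real work.
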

 \begin{proof} 
  If $b(G) = 4$, then we are done. So assume that $b(G) = 3$. Theorem \ref{psthm} now guarantees the existence of a normal subgroup $H$ of $G$ of order $p$ such that $|G/H : \Z(G/H)| = p^3$. Thus, using the fact that the exponent of $G/\Z(G)$ is $p$ (which follows from the given hypothesis), we can assume that 
 $$G=\langle a, b, c, x_1, \ldots , x_k\rangle,$$
 where $k \ge 2$ and $[x_i, G] = H$ for $1 \le i \le k$.  Let $S:=\gen{a, b, c}$ be the subgroup of $G$ generated by $a, b, c$. Notice that $|S|= p^6$ and $|\gamma_2(S)| = p^3$, and therefore $\K(S) = \gamma_2(S)$. If $S \le \C_G(x_i)$ for all $1 \le i \le k$, then, since  $\Z(G) = \gamma_2(G)$, $G$  can written as a central  product of $S$ and a $k$-generator group isoclinic to an extraspecial $p$-groups generated by $\{x_1, \ldots, x_k\}$. Now using Lemma \ref{prelemma4}, we have $\K(G) = \gamma_2(G)$. So assume that $[x_t,S] = H$ for some $t \in \{1, \ldots, k\}$. By reordering the set $\{x_1, \ldots, x_k\}$, if necessary, we can assume that $t=1$. For simplicity of notation, we set $d:= x_1$. Since $\C_G(d)$ is a maximal subgroup of $G$, we can modify the generators $a, b, c$ such that $\gamma_2(G) = \langle[a,b], [a,c], [b,c], [c,d]\rangle$, $[a, d] = [b, d] = 1$ and $H = \gen{[c,d]}$. We can also assume, by suitable modification of $x_i$, that $[c, x_i] = 1$ for all $2 \le i \le k$.
 
 Let $\alpha_i \in \mathbb{F}_p$ for $1 \le i\le 4$. If $\alpha_3 \neq 0$, then the following identity holds:
 $$[a,b]^{\alpha_1}  [b,c]^{\alpha_2} [a,c]^{\alpha_3} [c,d]^{\alpha_4}=[ c  b^{\alpha_1 {\alpha_3}^{-1}}, a^{-\alpha_3} b^{-\alpha_2} d^{\alpha_4}].$$ 
Thus any element of $\gamma_2(G)$ involving $[a,c]$ is a commutator. If $\alpha_3 = 0$ and $\alpha_2 \neq 0$, then for any $\alpha_1, \alpha_4 \in \mathbb{F}_p$ we have
$$ [a,b]^{\alpha_1}   [b,c]^{\alpha_2}    [c,d]^{\alpha_4}= [bd^{{-\alpha_4}{\alpha_2}^{-1}},   a^{-\alpha_1}  c^{\alpha_2}].$$
So, it only remains to show that elements of the form $[a,b]^{\alpha_1} [c,d]^{\alpha_4}$ are commutators, where both $\alpha_1$ and $\alpha_4$ are non-zero in $\mathbb{F}_p$. 

If  $[a,x_i] =[c,d]^{\beta}$ for some  $2\leq i \leq k$ and  $\beta \in \mathbb{F}_p^*$, then
$$[a,b]^{\alpha_1}[c,d]^{\alpha_4}=[a,b^{\alpha_1}{x_i}^{\beta^{-1}\alpha_4}].$$
Set $A := \gen{x_2, \ldots, x_k}$. Now suppose that $a \in \C_G(A)$, but $b \not\in \C_G(A)$. So there must exist an $x_i$, $2 \le i \le k$, such that  $[b,x_i] =[c,d]^{\beta}$, where $\beta \in \mathbb{F}_p^*$. Then
$$[a,b]^{\alpha_1}[c,d]^{\alpha_4}=[b,a^{-\alpha_1}{x_i}^{\beta^{-1}\alpha_4}].$$
Let $a, b \in \C_G(A)$.  If $[d,x_i] =[c,d]^{\beta}$ for some $2\leq i \leq k$ and $\beta \in \mathbb{F}_p^*$, then
$$[a,b]^{\alpha_1}[c,d]^{\alpha_4}=[ad,b^{\alpha_1}{x_i}^{\beta^{-1}\alpha_4}].$$
So finally assume that $a, b, c, d \in \C_G(A)$. Notice that, in this case, $k \ge 3$ and $\gamma_2(A) = H$, since $[A, G] = H$. Hence  $[x_i,x_j] =[c,d]^{\beta}$ for some $2\leq i, j \leq k$ and $\beta \in \mathbb{F}_p^*$, and therefore we have
$$[a,b]^{\alpha_1}   [c,d]^{\alpha_4}=[ bx_i, a^{-\alpha_1}   {x_j}^{ \alpha_4 \beta^{-1}}].$$
This shows that each element of $\gamma_2(G)$ is a commutator, and the proof is complete.  \hfill $\Box$

\end{proof}

\section{Groups of class $4$}

We start with groups of order $p^6$. Upto isoclinism, there are only 3 groups of order $p^6$ whose commutator subgroup is elementary abelian of order $p^4$, $p \ge 3$ (see \cite{rJ80}).

\begin{lemma}\label{p6lem}
Let $G$ be a  group of order $p^6$, $p \ge 3$ with $\gamma_2(G)$ is elementary abelian of order $p^4$. Then $\K(G) = \gamma_2(G)$ if and only if $|\Z(G)| = p$. Moreover, if $\K(G) \ne \gamma_2(G)$, then every element of $\gamma_2(G)$ can be written as a product of at most two elements from $\K(G)$.
\end{lemma}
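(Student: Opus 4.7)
My plan is to reduce to a finite list of cases and then do a direct commutator calculation in each. Since $|G|=p^6$ and $|\gamma_2(G)|=p^4$, the Frattini quotient $G/\Phi(G)$ has order $p^2$, so $G$ is $2$-generated, say by $a$ and $b$. By Lemma \ref{prelemma} I may replace $G$ by any isoclinic group; restricting to stem representatives and invoking James's classification \cite{rJ80}, which by the remark preceding the lemma lists exactly three such isoclinism classes, reduces the problem to three explicit presentations. First I would read off $\Z(G)$ from each presentation; this will show that exactly one of the three classes has $|\Z(G)|=p^2$, while the remaining two have $|\Z(G)|=p$, thereby setting up the dichotomy demanded by the statement.

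For the two classes with $|\Z(G)|=p$ I would fix a basis of $\gamma_2(G)$ consisting of basic commutators of weights $2$, $3$, $4$ in $a,b$, and for an arbitrary element
\[
w=[a,b]^{\lambda_1}[a,b,a]^{\lambda_2}[a,b,b]^{\lambda_3}[a,b,a,b]^{\lambda_4}\in\gamma_2(G)
\]
I would expand a candidate commutator $[a^{\alpha_1}b^{\alpha_2}u,\ a^{\beta_1}b^{\beta_2}v]$ with $u,v\in\gamma_2(G)$ using the standard commutator collection identities, compare coefficients against the chosen basis, and show that the resulting linear system over $\mathbb{F}_p$ in the $\alpha_i,\beta_j$ and in the coordinates of $u,v$ is solvable for every $(\lambda_1,\lambda_2,\lambda_3,\lambda_4)$. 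The small centre and the rigidity of the nilpotent structure in each presentation should supply enough degrees of freedom to make this step essentially linear algebra.

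For the remaining class, with $|\Z(G)|=p^2$, I would carry out the analogous computation with a carefully chosen test element $w\in\gamma_2(G)$ and verify that the associated linear system over $\mathbb{F}_p$ admits no solution, yielding $w\notin \K(G)$. This is the step I expect to be the main technical obstacle, since one has to identify the right $w$ and then systematically exclude every possible representation of it as a single commutator, in the spirit of the argument used in Lemma \ref{cl2lemma1}. Finally, for the addendum that every element of $\gamma_2(G)$ is a product of at most two commutators whenever $\K(G)\neq\gamma_2(G)$, I would pick any subgroup $H\leq\gamma_2(G)\cap\Z(G)$ of order $p$, observe that $\gamma_2(G/H)$ is elementary abelian of order $p^3$, apply Theorem \ref{dmr77} to get $\K(G/H)=\gamma_2(G/H)$, and then invoke Lemma \ref{prelemma5} to conclude.
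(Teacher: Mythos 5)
Your proposal follows essentially the same route as the paper's proof: reduction via isoclinism to the three families $\phi_{23}$, $\phi_{40}$, $\phi_{41}$ of James's classification \cite{rJ80}, the dichotomy $|\Z(G)|=p^2$ versus $|\Z(G)|=p$, explicit commutator computations inside each presentation, and the identical quotient argument (Theorem \ref{dmr77} together with Lemma \ref{prelemma5}) for the statement about products of two commutators. The only differences are in execution: the paper settles the $|\Z(G)|=p^2$ family by citing \cite[Proposition 5.3]{KM05} instead of a hands-on coefficient comparison, organizes the $|\Z(G)|=p$ cases modulo $\gamma_4(G)$ via Lemma \ref{prelemma3} (with $p=3$ checked by GAP), and the coefficient systems you would obtain are quadratic in the exponents rather than linear, which does not affect the viability of your plan.
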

\begin{proof}
It follows from \cite{rJ80} that upto isoclinism there are only three groups $G$ of order $p^6$, $p\ge 3$, such that $\gamma_2(G)$ is elementary abelian of order $p^4$. These fall under isoclinism families $\phi_{23}$, $\phi_{40}$ and $\phi_{41}$. All these groups are of nilpotency class $4$. For any group  $G$ belonging to  $\phi_{23}$, we have  $|\Z(G)| = p^2$ and it can be easily checked using GAP \cite{GAP} or Magma \cite{BCP} that $p \ge 5$. Let $G$ be a representative from $\phi_{23}$, which is presented as
\begin{eqnarray*}
 G &=& \langle\alpha, \alpha_1,  \alpha_2, \alpha_3, \alpha_4, \gamma \mid [\alpha_i,  \alpha]=\alpha_{i+1},  [\alpha_1,  \alpha_2]=\gamma,\\
 & & \;\  \alpha^p=\gamma, \alpha{_1}^p=\alpha_{i+1}^p=\gamma^p=1~ (i=1,2,3) \rangle.
\end{eqnarray*}
Notice that  $G$ is minimally generated by $\alpha$ and $\alpha_1$, exponent of $\gamma_2(G)$ is $p$ and $[\alpha_1, [\alpha, \alpha_1]] = \gamma \in Z(G)$. Hence it follows from \cite[Proposition 5.3]{KM05} that  $\K(G) \neq \gamma_2(G)$. More precisely, $\alpha_4 \gamma \notin \K(G)$.

For all groups $G$ from the isoclinism families $\phi_{40}$ and $\phi_{41}$, $|\Z(G)| = p$. If $p =3$, then an easy  GAP \cite{GAP} computation shows that $\K(G) = \gamma_2(G)$ for such groups $G$. So we can assume that $p \ge 5$.  Let $G$ be a representative from $\phi_{40}$, which is presented as
\begin{eqnarray*}
G &=&\langle \alpha_1, \alpha_2, \beta, \beta_1, \beta_2, \gamma \mid [ \alpha_1, \alpha_2]= \beta, [ \beta, \alpha_i]= \beta_i, [ \beta_1, \alpha_2]=[ \beta_2, \alpha_1]= \gamma,\\
& & \; \ \alpha^p=  {\alpha_i}^p= \beta^p={\beta_i}^p  =\gamma^p=1~ (i=1,2) \rangle.
\end{eqnarray*}
Notice that  $\gamma_2(G)=\langle \beta, \beta_1, \beta_2, \gamma \rangle$
and  $\gamma_4(G)=\langle \gamma \rangle$.
 Let $i,j,k \in \mathbb{F}_p $. If  $j \neq 0$, then
 $$\gamma^l = [  {\alpha_2}^{kj^{-1}}  \alpha_1 {\beta}^{\frac{1-i}{2}},  \beta_2^{-l} ]$$
 and  modulo $\gamma_4(G)$ we have
 $$  \beta^i \beta_1^j \beta_2^k  = [  {\alpha_2}^{kj^{-1}} {\alpha_1}
{\beta}^{\frac{1-i}{2}},  {\alpha_2}^i {\beta}^{-j} ].$$
If  $j = 0$, then
 $$\gamma^l= [ \alpha_2 {\beta}^{-\frac{1+i}{2}},  \beta_1^{-l} ]$$
 and  modulo $\gamma_4(G)$ we have
 $$  \beta^i \beta_2^k  = [ \alpha_2 {\beta}^{-\frac{1+i}{2}}, 
{\alpha_1}^{-i} \beta^{-k}].$$
Thus,  for $i, j, k \in \mathbb{F}_p$, it follows that
$$\gamma_2(G)/\gamma_4(G) = \big(  \bigcup \limits_{\substack{i , k \\ j \neq 0}}[ {\bar \alpha_2}^{\; kj^{-1}} {\bar \alpha_1} {\bar \beta}^{\; \frac{1-i}{2}} , \bar G]  \big) \bigcup  \big( \bigcup \limits_{ i} \;  [\bar \alpha_2 {\bar \beta}^{-\frac{1+i}{2}}, \bar G] \big),$$
where $\bar{\alpha}_1 = \alpha_1 \gamma_4(G)$, $\bar{\alpha}_2 = \alpha_2
\gamma_4(G)$ and $\bar{\beta} = \beta \gamma_4(G)$. Also
$$ \gamma_4(G) \subseteq \big(  \bigcap \limits_{\substack{i , k \\ j \neq 0}}[ {\alpha_2}^{kj^{-1}} {\alpha_1} {\beta}^{\frac{1-i}{2}} , G]  \big) \bigcap  \big( \bigcap \limits_{i} \; [\alpha_2 {\beta}^{-\frac{1+i}{2}}, G]  \big).$$
Hence $\K(G) = \gamma_2(G)$ by Lemma \ref{prelemma3}.

The group $G$, as presented below, is a representative from the isoclinism family $\phi_{41}$ for $p \ge 5$.
 \begin{eqnarray*}
 G &=& \langle \alpha_1, \alpha_2, \beta, \beta_1,
\beta_2, \gamma \mid [ \alpha_1, \alpha_2]= \beta, [ \beta, \alpha_i]=
\beta_i, [\alpha_1, \beta_1 ]={\alpha_1}^p= \gamma, \\
& &\;\ [ \alpha_2,\beta_2]=  {\gamma}^{-\nu},\alpha^p={\alpha_i}^p= \beta^p={\beta_i}^p 
=\gamma^p=1 \; (i=1,2) \rangle,
\end{eqnarray*}
where   $\nu$ denotes the smallest positive integer which is a non-quadratic residue (mod $p$). As in the preceding case, it is easy to check that $\gamma_2(G)/\gamma_4(G)$ can be written as the union of the sets $[\bar x, G/\gamma_4(G)]$, where $\bar x$ runs over the  elements of the set $S := \{{\bar \alpha_2}^{kj^{-1}} {\bar \alpha_1} {\bar \beta}^{\frac{1-i}{2}}, \bar \alpha_2 {\bar \beta}^{-\frac{1+i}{2}}  \mid i, j \;(\ne 0), k \in \mathbb{F}_p\}$ and $\gamma_4(G)$ is contained in the intersection of the sets $[x, G]$, where $\bar x \in S$. Hence $\K(G) = \gamma_2(G)$ by invoking Lemma \ref{prelemma3} again.  
The proof is now complete by taking $G/H$, where $H$ is any subgroup of $\Z(G)$ of order $p$, and using Theorem \ref{dmr77} and Lemma \ref{prelemma5}.  \hfill $\Box$
 
 \end{proof}

 Now we take up groups of order $p^7$.
 
  \begin{lemma} \label{p7lem3}
Let $L$ be a group of order $p^7$ and nilpotency class $4$ with $b(L)=3$, $\Z(L) \le \gamma_2(L)$  and $\gamma_2(L)$ elementary abelian of order $p^4$. Then $\K(L)=\gamma_2(L)$.
\end{lemma}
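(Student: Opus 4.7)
My plan is to apply Theorem~\ref{keyresult} to decompose $L$ into a $2$-generator subgroup of order $p^6$ together with one extra generator $x$, dispose of two of the three possible isoclinism types using Lemma~\ref{p6lem}, and in the remaining $\phi_{23}$-case combine Rodney's Theorem~\ref{dmr77} applied to $L/\gamma_4(L)$ with Lemma~\ref{prelemma3}.

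Since $L$ has nilpotency class $4$ and $b(L)=3$, only case (i) of Theorem~\ref{keyresult} applies; this yields a $2$-generator subgroup $G=\langle a,b\rangle\le L$ with $|G|=p^6$, of nilpotency class $4$, and $\gamma_2(G)=\gamma_2(L)$. Because $|L|=p^7$, only one extra generator $x$ is required, and the class-$4$ branch of the proof of Theorem~\ref{keyresult} forces $[x,L]=\gamma_4(L)=:H$ of order $p$, with $H\le Z(L)$. By Lemma~\ref{p6lem}, $G$ is isoclinic to one of $\phi_{23},\phi_{40},\phi_{41}$. If $G$ is isoclinic to $\phi_{40}$ or $\phi_{41}$, then $|Z(G)|=p$ and Lemma~\ref{p6lem} gives $\K(G)=\gamma_2(G)=\gamma_2(L)$, hence $\K(L)\supseteq\K(G)=\gamma_2(L)$ and we are done. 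As $\phi_{23}$ occurs only for $p\ge 5$, this already settles the prime $p=3$.

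It remains to treat the case in which $G$ is isoclinic to $\phi_{23}$, so that $p\ge 5$ and $\K(G)\subsetneq \gamma_2(G)$. The quotient $L/H$ has order $p^6$, nilpotency class $3$, and $\gamma_2(L/H)$ elementary abelian of order $p^3$; Rodney's Theorem~\ref{dmr77} therefore yields $\K(L/H)=\gamma_2(L/H)$. Consequently there exist finitely many $g_1,\dots,g_n\in L$ with
\[
\gamma_2(L)/H \;=\; \bigcup_{i=1}^{n}[g_iH,\,L/H].
\]
Moreover, the standard commutator identity $[gx^k,y]=[g,y]^{x^k}[x^k,y]$ together with $[x,\gamma_2(L)]\le H\le Z(L)$ shows $[gx^k,y]\equiv[g,y]\pmod{H}$, so replacing any $g_i$ by $g_ix^{k_i}$ preserves the covering displayed above. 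Lemma~\ref{prelemma3} will then give $\gamma_2(L)=\bigcup_i [g_ix^{k_i},L]\subseteq\K(L)$, provided we can select exponents $k_i\in\mathbb{F}_p$ so that $H\subseteq [g_ix^{k_i},L]$ for every $i$.

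The main obstacle is precisely this last selection. The relevant computation expresses $[g_ix^{k_i},y]\cdot [g_i,y]^{-1}$ as an $H$-valued linear form in $k_i$ involving the homomorphism $h_x\colon L\to H$, $g\mapsto [g,x]$, which is nonzero because $[x,L]=H\ne 1$. By exploiting this non-triviality, together with additional freedom obtained from replacing $g_i$ by $g_i z$ with $z\in Z(G)$ (which fixes the coset $g_iH$ and shifts $[g_i,x]$ by $[z,x]\in H$), one reduces the problem to a small linear system over $\mathbb{F}_p$; the hypothesis $p\ge 5$ supplies the arithmetic room needed to solve it, so that for each coset of $\gamma_2(L)/H$ a lift meeting $H\subseteq [g_ix^{k_i},L]$ can indeed be found. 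This final bookkeeping, which depends on the explicit $\phi_{23}$-presentation of $G$ and on the concrete form of the action of $x$ on $G$, is the technical heart of the argument.
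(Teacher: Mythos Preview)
Your overall architecture matches the paper's: use Theorem~\ref{keyresult} to extract the $2$-generator subgroup $G$ of order $p^6$, dispose of the $\phi_{40}$ and $\phi_{41}$ cases via Lemma~\ref{p6lem}, and in the $\phi_{23}$ case aim for Lemma~\ref{prelemma3} with $H=\gamma_4(L)$. But the final paragraph is not a proof; you explicitly leave the ``technical heart'' undone, and the tools you propose for it do not work as stated.

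Two concrete problems. First, the ``additional freedom'' from $z\in Z(G)$ is illusory. In the $\phi_{23}$ case $Z(G)\le\gamma_2(G)=\gamma_2(L)$, and since $\C_L(x)$ is maximal in $L$ we have $\gamma_2(L)\le \C_L(x)$; hence $[z,x]=1$ for every $z\in Z(G)$, so $Z(G)\le Z(L)$ and replacing $g_i$ by $g_iz$ does not change $[g_i,L]$ at all. Second, invoking Rodney's theorem abstractly gives you no control over the $g_i$, so you cannot argue that a suitable $k_i$ exists without doing the computation you defer. The paper avoids this by \emph{not} using Rodney: it writes down the covering of $\gamma_2(L)/H$ explicitly as $\bigcup_{\epsilon,i,j}[\bar a^{\epsilon}\bar b^{\,i}[\bar a,\bar b]^{\,j},\bar L]$, using only elements of $G$. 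Crucially, before checking the containment $H\subseteq[a^{\epsilon}b^{\,i}[a,b]^{\,j},L]$, the paper first normalises the extra generator $c$ (your $x$): since $\C_L(c)$ is maximal one can arrange $[b,c]=1$ and $H=\langle[a,c]\rangle$, and uses that $H=\langle[b,\gamma_3(G)]\rangle$ in the $\phi_{23}$ family. With this normalisation the verification is a one-line commutator expansion in each case, rather than an unspecified linear system. Your approach could be completed, but only after making exactly this kind of normalisation and explicit choice of covering elements; the appeal to $p\ge 5$ for ``arithmetic room'' is not what is actually needed.
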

\begin{proof}
It follows from Lemma \ref{prelemma2} that $|\Z(L)| \le p^2$. By Theorem \ref{keyresult}, $L$ admits a subgroup $G$ of order $p^6$ and nilpotency class $4$ such that $\gamma_2(G) = \gamma_2(L)$. If  $|\Z(G)| = p$, then by Lemma \ref{p6lem} we have $\K(G) =  \gamma_2(G) = \gamma_2(L)$. Hence $\K(L) = \gamma_2(L)$.  So assume that $|\Z(G)| = p^2$. 
By Theorem \ref{psthm}, there exists a normal subgroup $H$ of $L$ such that  $|L/H:\Z(L/H)|=p^3$. We can take $L = \gen{a, b, c}$ such that $G = \gen{a, b}$.  As observed in the proof of Theorem \ref{keyresult} (last para), it follows that  $H = \gamma_4(L)$.  Thus $\bar{L} := L/H = \gen{\bar{a}, \bar{b}, \bar{c}}$ is of nilpotency class $3$ such that $\bar{c} \in \Z(\bar{L})$, where $\bar{x} = xH$ for any $x \in L$. Since $c \not\in \Z(L)$, we have $[c, L] = H$. Notice that $\gamma_2(\bar{L}) = \gen{[\bar a, \bar b], [\bar a, [\bar a, \bar b]],  [\bar b, [\bar a, \bar b]]}$. 
Let $\alpha_1,\alpha_2,\alpha_3 \in \mathbb{F}_p$. If $\alpha_3 \neq 0$, then we can write
 $$[\bar a, \bar b]^{\alpha_1}[\bar b,[\bar a, \bar b]]^{\alpha_2}[\bar a,[\bar a, \bar b]]^{\alpha_3} = [\bar a \ \bar b^{{\alpha_2}{\alpha_3}^{-1}} [\bar a, \bar b]^{\frac{1-\alpha_1}{2}}, \bar b^{\alpha_1}[\bar a, \bar b]^{\alpha_3}].$$
 If $\alpha_3 = 0$, then we can write
$$[\bar a, \bar b]^{\alpha_1} [\bar b,[\bar a, \bar b]]^{\alpha_2}=[ \bar b \ [\bar a, \bar b]^{- \frac{1+\alpha_1}{2}}, \bar a^{-\alpha_1}[\bar a, \bar b]^{\alpha_2} ].$$
Hence, for $i, j \in \mathbb{F}_p$ and $\epsilon = 0, 1$ such that  $i$ and $\epsilon$ are not simultaneously zero, we have
$$\gamma_2(\bar{L}) =   \bigcup \limits_{{\epsilon},{i}, {j}} \ [{\bar{a}}^{\epsilon} \ {\bar{b}}^i [\bar a, \bar b]^j, \bar L].$$
Notice that $G$ lies in the isoclinism family $\phi_{23}$ of \cite{rJ80}. Therefore we can take $H=\langle [b, \gamma_3(G)] \rangle$. Since $\C_G(c)$ is maximal,  by  suitably modifing $c$, we can assume that $[b,c]=1$ and $H=\langle [a,c] \rangle$. Then, for all $i, j \in \mathbb{F}_p$ and $\epsilon = 0, 1$,  it follows that $H \subseteq [a^{\epsilon} 
b^{i} [\bar a, \bar b]^j, L]$, where $i$ and $\epsilon$ are not simultaneously zero.
Hence $\K(L)=\gamma_2(L)$ by Lemma \ref{prelemma3}.  \hfill $\Box$

\end{proof}


 \section{Groups of class 3 and order $p^7$}\label{sec5}
 
In this section we take up groups $G$ of order $p^7$, $p$ odd, and nilpotency class $3$, and prove that  $\K(G)=\gamma_2(G)$ if $|\Z| \le p^2$ and $\K(G) \ne \gamma_2(G)$ otherwise. A group $G$ is said to be a freest group of nilpotency class $2$ on $n$ generators if $G = F_n/\gamma_3(F_n)$ and $|G| = p^{n(n+1)/2}$, where $F_n$ denotes the free group on $n$ generators. Throughout the remaining of the paper $\epsilon \in \{0, 1\}$. 

 \begin{lemma} \label{p7lem1}
Let $G$ be a group of order $p^7$ and nilpotency class $3$ with $b(G)=3$, $|\Z(G)| \le p^2$  and $\gamma_2(G)$ elementary abelian of order $p^4$. Then $\K(G)=\gamma_2(G)$.
\end{lemma}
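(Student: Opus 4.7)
The plan is to apply Lemma \ref{prelemma3} with a carefully chosen central subgroup $H$ of order $p$. Since $|G/\gamma_2(G)|=p^3$, the group $G$ is $3$-generated; I will fix a generating set $\{a,b,c\}$. Because $G$ has nilpotency class $3$, we have $\gamma_3(G)\le \Z(G)$, and so $|\gamma_3(G)|\in\{p,p^2\}$. In either case I will choose a subgroup $H\le \gamma_3(G)$ of order $p$ so that $\gamma_2(G/H)$ remains elementary abelian of order $p^3$. Theorem \ref{dmr77} then yields $\K(G/H)=\gamma_2(G/H)$.

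Next, I will produce a finite list of elements $x_1,\dots,x_n\in G$ such that $\gamma_2(G)/H = \bigcup_i [x_i H,\, G/H]$. Following the pattern of Lemma \ref{p6lem}, using the commutator identities $[xy,z]=[x,z]^y[y,z]$ together with the fact that $\gamma_4(G)=1$ renders all weight-$3$ commutators central, I expect to rewrite a generic element of $\gamma_2(G)/H$, presented as $[a,b]^{\alpha}[a,c]^{\beta}[b,c]^{\gamma}\cdot w$ with $w\in\gamma_3(G)/H$, as a single commutator $[\bar{x}_i,\bar{y}]$ for some $y\in G$ and $x_i$ drawn from a short list parametrized by which of the exponents $\alpha,\beta,\gamma$ vanish. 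The remaining hypothesis of Lemma \ref{prelemma3}, namely $H\subseteq [x_i,G]$, will be verified by observing that the map $\gamma_2(G)\to\gamma_3(G)$, $g\mapsto [x_i,g]$, is a homomorphism which is non-trivial whenever $x_i\notin\C_G(\gamma_2(G))$; the breadth hypothesis $b(G)=3$ together with the fact that the $x_i$ must generate $G$ modulo $\gamma_2(G)$ will ensure some such $x_i$ in the covering has this property, and the covering itself can be refined so that every $x_i$ does.

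The main obstacle will be the case $|\gamma_3(G)|=|\Z(G)|=p^2$. Here $\gamma_2(G/H)$ is only of order $p^3$ for a restricted choice of $H$, and $H$ must further be chosen transverse to the kernels of the maps $[x_i,\,\cdot\,]\colon \gamma_2(G)\to \gamma_3(G)\cong \mathbb{F}_p^2$ that arise from the covering. I expect to show such an $H$ exists either by a dimension count inside $\gamma_3(G)$ combined with the fact that $b(G)=3$ forces enough of the $x_i$ to act non-trivially on $\gamma_2(G)$, or, failing a uniform argument, by handling the finitely many isoclinism types of such $G$ separately using the same commutator calculus. Once this is settled, Lemma \ref{prelemma3} completes the proof.
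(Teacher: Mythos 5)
Your overall strategy coincides with the paper's: quotient by a central subgroup $H$ of order $p$ inside $\gamma_3(G)$, cover $\gamma_2(G)/H$ by sets $[x_iH, G/H]$ using class-$2$ commutator identities, check $H\subseteq[x_i,G]$, and invoke Lemma \ref{prelemma3}. (Note that citing Theorem \ref{dmr77} buys you nothing here: knowing $\K(G/H)=\gamma_2(G/H)$ does not give the covering by sets $[x_iH,G/H]$ with a common list of $x_i$ absorbing $H$, which is what Lemma \ref{prelemma3} actually needs; you implicitly concede this by constructing the covering directly.) The genuine gap is in the sentence ``the covering itself can be refined so that every $x_i$ does.'' The covering elements are not free to be refined: the identities in the relatively free class-$2$ quotient force them to be of the form $x^{\epsilon}y^{i}$ for two \emph{fixed} generators $x,y$, so the whole burden is to show that, for a suitable choice and modification of generators, \emph{all} of these elements satisfy $\gamma_3(G)\subseteq[x^{\epsilon}y^{i},G]$. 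This is where the paper spends almost its entire proof: it first determines $\C_G(\gamma_2(G))$ (equal to $\gamma_2(G)$ when $|\Z(G)|=p$, and of order $p^5$ when $|\Z(G)|=p^2$ and $|\gamma_3(G)|=p$), via Hall--Witt identities and repeated generator changes, and then runs a case analysis (on which weight-$3$ commutators vanish, with further substitutions of generators) to arrange the two base generators outside the centralizer and to verify the intersection condition. In particular, when $|\Z(G)|=p^2$ one of the three generators necessarily centralizes $\gamma_2(G)$, so the covering must be built on the remaining two, and ``$b(G)=3$ ensures some $x_i$ works'' is not a substitute for this argument; you need all of them to work, and that requires the structural analysis you have not supplied.

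The second gap is the case $|\gamma_3(G)|=|\Z(G)|=p^2$, which you correctly flag as the main obstacle but for which you offer only two speculative routes (a dimension count, or enumeration of isoclinism types), neither carried out; moreover in this case $G/H$ still has class $3$, so the class-$2$ covering pattern you describe does not even apply directly. The paper resolves this case by showing it is vacuous: if $b(G)=3$, Theorem \ref{psthm} provides a normal subgroup $H$ of order $p$ with $|G/H:\Z(G/H)|=p^3$, and since $\Z(G)=\gamma_3(G)$ has order $p^2$ the quotient $G/H$ still has class $3$ with only two non-central generators, forcing $|\gamma_2(G/H)/\gamma_3(G/H)|=p^2$, a contradiction; hence such groups have breadth $4$ and are excluded by hypothesis. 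Without this observation (or a worked-out replacement), your proposal does not dispose of this case.
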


\begin{proof} Notice that  $G$ is a stem group  in its isoclinism class.  In view of Lemma \ref{prelemma1},  it follows that $G$ is minimally generated by $3$ elements $a, \ b, \ c$ (say). For notational convenience, set $\C = \C_G(\gamma_2(G))$. To enhance the readability of the proof, we divide it in several steps.   

\;\;

\noindent{\bf Step 1.} {\it If $|\Z(G)| = p$, then $\C=\gamma_2(G)$.}
\begin{proof}
Since  $|Z(G)|=p$, we have $|\gamma_3(G)| = p$, and therefore no non trivial element from the subgroup $\gen{[a, b],  [a, c],  [b,c]}$ can lie in $\gamma_3(G)$.   If  $|\C|=p^6$, then, without loss of generality, we can assume that $b,c \in \C$. 
 By Hall-Witt Identity we have
   $$ [[a,b],c][[c,a],b][[b,c],a]=1,$$
 which implies that  $[b,c] \in \Z(G)$. As observed above, this is not possible. 
 
 If $|\C|=p^5$, then, without loss of generality, we can assume that $c \in \C$. If both $[a,[a,b]]$ and $[b,[a,b]]$ are trivial, then $[a,b] \in Z(G)$, which is not possible.  By symmetry, we can assume that $\gamma_3(G)=\langle [a,[a,b]] \rangle$.  Notice, by Hall-Witt Identity, that $[a,[b,c]] = [b,[a,c]]$. First we assume that $[a,[b,c]]$ is trivial. If $[b,[b,c]]$ or $[a,[a,c]]$ is trivial, then $[b,c]$ or $[a,c]$, respectively,  lies in $Z(G)$, which is again not possible. So assume that both $[b,[b,c]]$ and $[a, [a,c]] $ are non trivial. Then  $[b,[a,b]]=[b,[b,c]]^t$ for some $t \in \mathbb{F}_p$. Also $[a,[a,b]]=[a,[a,c]]^s$ for some $s \in \mathbb{F}_p^*$. Hence both $[b,[a,b][b,c]^{-t}]$ and $[a,[a,b][a,c]^{-s}]$ are trivial, which implies that  $[a,b][a,c]^{-s}[b,c]^{-t} \in Z(G)$, not possible. 
 
Finally assume that  $[a, [b,c]] $ is non trivial.  Then $[b,[a,c]]$ is also non trivial. If $[b, [b,c]]$ is non trivial, then $[a, [b,c]]=[b, [b,c]]^s$ for some $s \in  \mathbb{F}_p^*$,  and therefore $[ab^{-s}, [b,c]]=1$. By replacing $a$ by $ab^{-s}$, we get a modified generating set $\{a,b,c \}$ for $G$ such that $c \in \C$ and $[a,[b,c]]=1$.  Similarly, if $[a,[a,c]]$ is non trivial, then we can modify the generating set for $G$ such  that $[b, [a,c]]=1$. So, in both the cases, we land up in first case, which we have already handled. So let $[b, [b, c]] = [a, [a, c]] =1$. Then $[a, [a, b]] [a, [b, c]]^{-s} =1$, which implies that $[a, [a, b] [b, c]^{-s}] =1$ for some $s \in \mathbb{F}_p^*$. Similarly $[b, [a, b] [a, c]^{-t}] =1$ for some $t \in \mathbb{F}_p$. This implies that $[a, b] [b, c ]^{-s} [a, c]^{-t} \in \Z(G)$, which is not possible. Thus $\C$ can not have order $p^5$. Hence $\C =\gamma_2(G)$.
\end{proof}

\noindent{\bf Step 2.} {\it If $|\Z(G)| = p$, then $\K(G)=\gamma_2(G)$.}
\begin{proof}
By Step 1 we have $\C =\gamma_2(G)$.  Notice that  $\bar G := G/ \gamma_3(G)$ is the
$3$-generated freest group of nilpotency class $2$ and order $p^6$.
Let $\alpha_i  \in \mathbb{F}_p$, $1 \leq i \leq 3$. For $\alpha_1 \neq
0$, we have
$$[\bar a, \bar b]^{\alpha_1}  [\bar b, \bar c]^{\alpha_2}   [\bar a,  \bar c]^{\alpha_3}=
[\bar a \bar c^{{-\alpha_2}{\alpha_1}^{-1}}, \bar b^{\alpha_1} \bar c^{\alpha_3}],$$
where $\bar x = x \gamma_3(G)$ for  $x \in G$.
Also, for  $\alpha_1=0$,  we have
$$[\bar b, \bar c]^{\alpha_2}[\bar a, \bar c]^{\alpha_3}=[ \bar c, \bar a^{-\alpha_3} \bar b^{-\alpha_2}].$$
Hence for  $i \in \mathbb{F}_p$, we get
$$\gamma_2(\bar G) =  \bigcup \limits_{ \epsilon , i} \ [\bar a^{\epsilon} \bar c^i, \bar G].$$
Since $\bar G$ is the freest group,  by symmetry we can interchange $a,b,c$ in the preceding equation. 

To complete the proof of this step, it is sufficient to show that 
$$\gamma_3(G) \subseteq \bigcap \limits_{ \epsilon , i} \ [ x^{\epsilon} y^i,  G]$$
for some $x \ne y$  in $\{a, b, c\}$,  where   $i \in \mathbb{F}_p$ such that $\epsilon$ and $i$ are  not simultaneously zero. First assume that $[a,[a,b]] \ne 1$, and therefore generates $\gamma_3(G)$. 
If $[c,[a,b]] \neq 1$, then  $[c,[a,b]] = [a,[a,b]]^t$ for some $t \in \mathbb{F}_p^*$. Therefore, by modifying $c$ by $ca^{-t}$, we get a new generating set $\{a,b,c \}$ for $G$ such that $[c,[a,b]]=1$ and $[a,[a,b]] \ne 1$. So we can always assume that 
$[c,[a,b]]=1$. Since $c \notin \C$,  either $[c,[b,c]]$ or $[c,[a,c]]$ is non trivial. Hence for $\epsilon=0,1$ and $i
\in \mathbb{F}_p$, not simultaneously zero, it is easy to see that $\gamma_3(G) \subseteq    \bigcap \limits_{ \epsilon , i}
\ [a^{\epsilon} c^i, G]$.  

Now let us assume that $[a,[a,b]]=1$. Then, notice that, at least one of $[a,[b,c]]$ and $[a,[a,c]]$ is not trivial. 
If $[b,[a,b]]=1$, then $[c,[a,b]] \neq 1$.  Hence for $\epsilon=0,1$ and $i \in \mathbb{F}_p$ not simultaneously zero, we get $\gamma_3(G) \subseteq    \bigcap \limits_{ \epsilon , i} \ [c^{\epsilon} a^i, G]$. If $[b,[a,b]] \neq  1$, then $\gamma_3(G) \subseteq   \bigcap \limits_{ \epsilon , i} \ [b^{\epsilon} a^i, G]$, where both $\epsilon$ and $i$ are not simultaneously zero.
\end{proof}
 
 \noindent{\bf Step 3.} {\it If $|\Z(G)| = p^2$ and $|\gamma_3(G)| = p$, then $|\C|=p^5$.}
\begin{proof}
If $|\C| =p^6$, then, without loss of
generality, we can assume that $b,c \in \C$, and therefore by Hall-Witt Identity it follows that $[b,c] \in \Z(G)$. Hence no non-trivial element of the form $[a, b]^r [a, c]^s$ can lie in $\Z(G)$, where $r, s \in \mathbb{F}_p$.
If either  $[a,[a,b]]$ or $[a,[a,c]]$ is trivial,  then,  $\gamma_3(G)$ being of order $p$, it follows that either $[a,b]$ or $[a,c]$, respectively,  lie in $Z(G)$, which is not possible. If both $[a,[a,b]]$ and $[a,[a,c]]$ are non trivial, then  $[a,[a,b]]=[a,[a,c]]^t$ for some $t \in \mathbb{F}_p^*$. Hence $[a,b][a,c]^{-t} \in Z(G)$, which is again not possible. Hence $|\C| \ne p^6$.

If $|\C| = p^4$, then  $\C = \gamma_2(G)$. By a suitable modification in the generating set for $G$, we can assume that $[b,c] \in Z(G)$. Indeed, if one of $[a, b]$, $[a,c]$ and $[b, c]$ is in $\Z(G)$, then, after suitably renaming the generators, we can assume that $[b, c] \in \Z(G)$. If not, then, after renaming the generators, if necessary, we can assume that  $[b, c] = [a, b]^r[a, c]^s$ modulo $\Z(G)$ for some $r, s \in \mathbb{F}_p$. This implies that, modulo $\Z(G)$,  $[b a^{-s}, c a^r]=1$. Thus the new generating set $\{a,  b a^{-s}, c a^r\}$ has the required property. 
Once $[b, c] \in \Z(G)$, notice that  both $[a,[a, b]]$ and $[a, [a,c]]$ can not be trivial;  otherwise $a$ will lie in $\C$, which can not happen. By symmetry, we can assume that $[a,[a,b]]$ is non trivial and therefore generates $\gamma_3(G)$. By Hall-Witt Identity we have $[c,[a,b]]=[b,[a,c]]$. We first assume that $[c,[a,b]]$ is trivial.  Then none of the two elements $[c,[a,c]]$ and  $[b,[a,b]]$ can be trivial. Indeed, if $[x, [a,x]] = 1$, then $x \in \C$ for $x = b, c$, which is not the case. Then $[a,[a,c]]=[c,[a,c]]^r$ for some $r \in \mathbb{F}_p$, which implies that $[a c^{-r},[a,c]]=1$.  Replacing $a$ by $ac^{-r}$, we get a generating set $\{a' = a c^{-r}, b, c \}$ such that $\gamma_2(G)=\langle [a',b],[a',c],[b,c],[a',[a',b]]\rangle$. A straightforward computation shows that  $[b,c] \in Z(G)$   and $[b,[a', c]] = [c,[a',b]] =1 = [a',[a',c]]$; but $[c,[a',c]]$ and  $[b,[a',b]]$ are non trivial. Now $[a',[a',b]]=[b,[a',b]]^t$ for some $t \in \mathbb{F}_p^*$,  which implies that $a' b^{-t}$ lies in $\C$, which is not possible.

Now we claim that when at least one the two elements $[b, [a,b]]$ and $[c, [a,c]]$ is non trivial, then we can modify the generating set for $G$ such that $[c,[a,b]] =1$, but $[b,[a,b]]$ and $[c, [a,c]]$ are both non trivial, and hence we fall in the preceding case. If $[b,[a,b]]$ is non trivial, then $[c,[a,b]] = [b,[a,b]]^t$ for some $t \in \mathbb{F}_p$. Replacing $c$ by $b^{-t}c$, we get the required generating set. If $[c, [a,c]]$ is non trivial,  then $[b,[a,c]] =[c,[a,c]]^r$ for some $r \in \mathbb{F}_p$. Now  replacing  $b$ by $c^{-r}b$, we again get the required generating set. 

Finally assume that $[b, [a, c]] = [c, [a, b]]$ is non trivial, and  $[b,[a,b]]$ and $[c,[a,c]]$ are both trivial. Then $[a,[a,c]]=[b,[a,c]]^r$ for some $r \in \mathbb{F}_p$, which gives $ [ab^{-r}, [a,c]]=1$. Replacing $a$ by $ab^{-r}$, we get a generating set 
$\{a'=ab^{-r} ,b,c \}$ for $G$ such that $\gamma_2(G)=\langle [a',b],[a',c],[b,c],[a',[a',b]]\rangle$, with $[b,c] \in Z(G)$  and $[a',[a',c]]=1$. Notice that  $[a', [a',b]] = [a,[a,b]]$ and $[c, [a',b]] = [c,[a,b]]$, both of which are non trivial. Hence $[a',[a',b]] =
[c,[a',b]]^t$ for some $t \in \mathbb{F}_p^*$, which gives $[a'c^{-t}, [a',b]]=1$. Since $[c, [a',c]] = [c, [a,c]] = 1$, it follows that  
$[a'c^{-t}, [a', c]]=1$. Hence $a'c^{-t} \in \C$, which is not possible. We have handled all the cases, and the proof of this step is complete.
\end{proof}

 \noindent{\bf Step 4.} {\it If $|\Z(G)| = p^2$ and $|\gamma_3(G)| = p$, then $\K(G) = \gamma_2(G)$.}
\begin{proof}
By Step 3, we know that  $|\C| = p^5$. Without loss of generality we can
assume that  $c \in C$. We consider three different cases, namely $[b,c]
\in \Z(G)$,  $[a,c] \in \Z(G)$ and otherwise.
Since $\bar G :=  G/\gamma_3(G)$ is the freest group, then, as explained in  Step 2, for proving $\K(G) = \gamma_2(G)$, it is sufficient to show that
$$\gamma_3(G) \subseteq \bigcap \limits_{ \epsilon , i} \ [ x^{\epsilon} y^i,  G]$$
for some $x \ne y$  in $\{a, b, c\}$,  where  $\epsilon=0,1$ and $i \in
\mathbb{F}_p$ such that $\epsilon$ and $i$ are  not simultaneously zero.

{\it Case (i).} Let $[b,c] \in \Z(G)$. Notice that all $[a, [b,c]]$, $[c,
[a, b]]$ and $[b, [a, c]]$ are trivial. Then none of the two elements $[a,
[a, c]]$ and $[b, [a, b]]$ can be trivial. Thus for any $ t , i  \in
\mathbb{F}_p$ we have
 $[a,[a,c]]^t=[ab^i , [a,c]^t]$ and  $[b,[a,b]]^t=[b , [a,b]^t],$ which implies that
$$ \gamma_3(G) =  \bigcap \limits_{ \epsilon, i} \ [a^{\epsilon} b^{i}, \gamma_2(G)] \subseteq \bigcap \limits_{ \epsilon, i} \ [a^{\epsilon} b^{i}, G],$$
where $\epsilon$ and $i$ are  not simultaneously zero.
Hence  $\K(G) = \gamma_2(G)$ by Lemma \ref{prelemma3}.

{\it Case (ii).}   Next assume that $[a,c] \in \Z(G)$. As in the preceding
case, both the elements $[a, [a, b]]$ and $[b, [b, c]]$ are non trivial
and $[a, [b, c]]=1$. For any  $ t, i \in \mathbb{F}_p$ we have
$[b, [b,c]]^t=[ba^i , [b,c]^t]$
 and
 $[a,[a,b]]^t=[a , [a,b]^t].$ Thus for $\epsilon = 0, 1$ and $i \in \mathbb{F}_p$, not simultaneously zero, we get
$$ \gamma_3(G) = \bigcap \limits_{ \epsilon, i} \ [b^{\epsilon}
a^{i}, \gamma_2(G)] \subseteq \bigcap \limits_{ \epsilon, i} \ [b^{\epsilon} a^{i}, G].$$

{\it Case (iii).} Let neither $[b, c]$ nor $[a, c]$ lie in $\Z(G)$. We can
assume that none of the two elements $[b, c]$ and $[a, c]$ can be a
power of the other. For, if  $[b, c] = [a, c]^t$, for some $t \in
\mathbb{F}_p^*$, then we can take a generating set $\{a, ba^{-t}, c\}$
such that $[ba^{-t}, c]  \in \Z(G)$,  and we fall in Case (i). We now
modify the generating set for $G$ to $\{a', b', c\}$ such that  $[a', b']
\in \Z(G)$ and $c \in \C$. If $[a, b] \in \Z(G)$, then obviously we take
$\{a' = a, b' = b, c\}$. If not, then, modulo $\Z(G)$,  $[a, b] = [a,
c]^r[b, c]^s$ for some $r, s \in \mathbb{F}_p$. This implies that, modulo
$\Z(G)$,  $[ ac^{s}, b c^{-r}]=1$. Thus if we take $\{a'= ac^s,  b' =
bc^{-r}, c\}$ as a generating set for $G$, then $[a', b'] \in \Z(G)$ and
$c$ still lies in $\C$.

First assume that $[a',[a',c]]=[b',[b',c]]=1$. Since $a', b' \notin C$, we
have $[a', [b',c]] = [b', [a',c]] \neq 1$. For any  $ t, i  \in
\mathbb{F}_p$ we have
 $[a', [b' ,c]]^t=[a'b'^i , [b', c]^t]$
 and
 $[b',[a',c]]^t=[b' , [a',c]^t].$
Thus for $\epsilon = 0, 1$ and $i \in \mathbb{F}_p$, not simultaneously zero, we get
$$ \gamma_3(G) = \bigcap \limits_{ \epsilon, i} \ [a^{\epsilon} b^{i}, \gamma_2(G)] \subseteq  \bigcap \limits_{ \epsilon, i} \ [a^{\epsilon}
b^{i}, G].$$

Now assume that  $[b',[b',c]]$ or $[a',[a',c]]$ is non trivial. We only
present the proof when $[b',[b', c]] \neq 1$. The other case goes on
similar lines. Without loss of generality, we can  assume
$[a',[b',c]]=[b',[a',c]]= 1$. Indeed, if $[b',[a',c]]= 1$, then nothing to
be done. Otherwise
$[a',[b',c]] = [b',[b',c]]^s$, which gives
$[a' {b'}^{-s},[b',c]]=1$. Replacing $a'$ by  $a'{b'}^{-s}$, we get the
required generating set $\{\tilde{a} = a'{b'}^{-s}, b', c\}$ for which  
$[b', [\tilde{a},c]]=[\tilde{a},[b',c]]=1$, $[\tilde{a},b'] \in Z(G)$, $c
\in C$ and $[\tilde{a},[\tilde{a},c]] \neq1$.
For any $ t, i \in \mathbb{F}_p$ we have
$[\tilde{a}, [ \tilde{a} ,c]]^t =[\tilde{a} b'^i , [\tilde{a},c]^t]$
 and
 $[b',[b', c]]^t=[b' , [b',c ]^t].$ Thus for $\epsilon = 0, 1$ and $i \in \mathbb{F}_p$, not simultaneously zero, we get
 $$ \gamma_3(G) = \bigcap \limits_{ \epsilon, i} \ [a^{\epsilon} b^{i}, \gamma_2(G)] \subseteq  \bigcap \limits_{ \epsilon, i} \ [a^{\epsilon}
b^{i}, G].$$
Hence $\K(G) = \gamma_2(G)$, which completes the proof of Step 4.
\end{proof}

It only remains to handle the situation when $|\Z(G)| = |\gamma_3(G)| = p^2$.  We claim that $b(G) = 4$ in this case. Contrarily assume that $b(G) = 3$. Then  by Theorem \ref{psthm} there exists a normal subgroup $H$ of $G$ such that $|G/H : \Z(G/H)| = p^3$.  Since $\Z(G) = \gamma_3(G)$ is of order $p^2$, the nilpotency class of $G/H$ is $3$. So we can assume that $cH \in \Z(G/H)$.  Notice that $|\gamma_2(G/H) /\gamma_3(G/H)|$ must be  $p^2$, which is not possible as $G/H$ admits only two non-central generators. Hence the claim follows, and the proof of the lemma is complete \hfill $\Box$
  
 \end{proof}

We now prove

\begin{lemma}\label{p7lem2}
Let $G$ be a group of order $p^7$ and nilpotency class $3$ with $b(G)=3$,
$|Z(G)| = p^3$  and $\gamma_2(G)$ elementary abelian of order $p^4$, where $p$ is an odd prime.
Then $\K(G) \ne \gamma_2(G)$. Moreover,  every element of $\gamma_2(G)$ can be written as a product of at most two elements from $\K(G)$.
\end{lemma}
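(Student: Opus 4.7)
The plan is to compute $[x,y]$ explicitly via class-$3$ commutator calculus and exhibit an element of $\gamma_2(G)$ outside $\K(G)$. Since $|G/\Z(G)|=p^4$ and $G$ has class $3$, the quotient $G/\Z(G)$ has class $2$ with commutator of order $p$, and (by Lemma~\ref{prelemma1}) exponent $p$; hence $G/\Z(G)\cong H_p\times C_p$, where $H_p$ is the Heisenberg group of order $p^3$. Write $G=\gen{a,b,c}$ with $c\in \Z_2(G)\setminus \Z(G)$, and set $z_1=[a,c]$, $z_2=[b,c]\in \Z(G)$, $u=[a,b]$, $v_1=[a,u]$, $v_2=[b,u]$. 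The Hall--Witt identity, using $z_1,z_2\in \Z(G)$, yields $[u,c]=1$, so $\gamma_3(G)=\gen{v_1,v_2}$. The inclusion $\Z(G)=\gen{z_1,z_2,v_1,v_2}$ of order $p^3$ into the elementary abelian $\gamma_2(G)=\gen{u,z_1,z_2,v_1,v_2}$ of order $p^4$ imposes exactly one linear relation.

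I would then reduce to normal forms. Case~B: $|\gamma_3(G)|=p$, i.e.\ $v_1,v_2$ dependent. Replacing $b$ by $ba^{-t}$ for suitable $t$ (with a corresponding change in $u$), we may assume $[b,u]=1$, so $v_2=1$ and $\{u,z_1,z_2,v_1\}$ is a basis of $\gamma_2(G)$. Case~A: $|\gamma_3(G)|=p^2$. Using the $GL_2$-action on $(a,b)$ (which transforms $(z_1,z_2)$ and $(v_1,v_2)$ simultaneously) together with the substitution $c\mapsto cu^r$ (which moves $(z_1,z_2)$ by $(rv_1,rv_2)$ while fixing $u,v_1,v_2$), the relation reduces to one of two normal forms: (A-par) $z_1=0$, or (A-non) $z_1=v_2$.

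Writing $x=a^{\alpha_1}b^{\alpha_2}c^{\alpha_3}u^{\alpha_4}$ and $y=a^{\beta_1}b^{\beta_2}c^{\beta_3}u^{\beta_4}$, the identity $[a^m,b^n]=u^{mn}v_1^{n\binom{m}{2}}v_2^{m\binom{n}{2}}$ combined with $[u,c]=1$ and the centrality of $z_1,z_2,v_1,v_2$ gives
\[
[x,y]=u^{M_0}z_1^{M_1}z_2^{M_2}v_1^{E_1}v_2^{E_2},
\]
where $M_0=\alpha_1\beta_2-\alpha_2\beta_1$, $M_1=\alpha_1\beta_3-\alpha_3\beta_1$, $M_2=\alpha_2\beta_3-\alpha_3\beta_2$, $E_1=\beta_2\binom{\alpha_1}{2}-\alpha_2\binom{\beta_1}{2}+\alpha_1\beta_4-\alpha_4\beta_1$, and $E_2=\alpha_1\alpha_2\beta_2+\alpha_1\binom{\beta_2}{2}-\alpha_2\beta_1\beta_2-\beta_1\binom{\alpha_2}{2}+\alpha_2\beta_4-\alpha_4\beta_2$. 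In each normalized case I exhibit a non-commutator by a case analysis on whether $\alpha_1,\beta_1$ vanish. For Cases~B and~A-par the target $w=z_2v_1$ works: the boundary sub-cases each force $E_1=0$ or $M_2=0$, and if $\alpha_1\beta_1\ne 0$ then substituting $\beta_2=\alpha_2\beta_1/\alpha_1$, $\beta_3=\alpha_3\beta_1/\alpha_1$ forces $M_2=0$ in Case~B, or yields $E_2=E_1/R$ with $R=\alpha_1\beta_3-\alpha_3\beta_1\ne 0$ in Case~A-par, ruling out $E_1=1,E_2=0$. For Case~A-non, using $z_1=v_2$ the $v_2$-exponent collapses to $E_2+M_1$, and for the target $w=z_2v_1^{n_1}v_2^{n_2}$ solvability in the nontrivial sub-cases reduces to the quadratic equations $t^2-n_2t+1=0$ or $R^2-n_2R+n_1=0$ having roots in $\mathbb{F}_p$; a short counting argument produces $(n_1,n_2)$ for which both discriminants $n_2^2-4$ and $n_2^2-4n_1$ are non-squares mod $p$, for every odd prime $p$.

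Finally, for the product-of-two-commutators assertion, take any subgroup $H\le \gamma_2(G)\cap \Z(G)$ of order $p$: then $\gamma_2(G/H)$ is elementary abelian of order $p^3$, Theorem~\ref{dmr77} gives $\K(G/H)=\gamma_2(G/H)$, and Lemma~\ref{prelemma5} yields the claim. The main obstacle is Case~A-non, where the obstruction to $w\in \K(G)$ is a quadratic-residue condition whose behavior depends on $p\pmod 4$: the simple target $(n_1,n_2)=(1,0)$ suffices when $p\equiv 3\pmod 4$ (including $p=3$), while for $p\equiv 1\pmod 4$ one needs the counting argument to locate $(n_1,n_2)$ with both discriminants non-residues.
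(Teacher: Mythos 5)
Your proposal follows essentially the same route as the paper: the same normalization (generators $a,b,c$ with $c$ central modulo $\Z(G)$, Hall--Witt giving $[c,[a,b]]=1$), the same case split on $|\gamma_3(G)|\in\{p,p^2\}$ with the same two normal forms $[a,c]=1$ and $[a,c]=[b,[a,b]]$ in the second case, the same coordinate expansion of a generic commutator, a quadratic-residue obstruction in the hardest case, and the identical two-commutator argument via Theorem~\ref{dmr77} and Lemma~\ref{prelemma5}. Two details deserve correction. In Case A-par your stated mechanism is wrong as written: since $z_1=1$ there is no equation in the system, so nothing forces $R=\alpha_1\beta_3-\alpha_3\beta_1\ne 0$, and in fact the $\gamma_3$-component of $[x,y]$ cannot involve $\alpha_3,\beta_3$ at all, because $c$ centralizes $\gamma_2(G)$ and $[a,c],[b,c]$ are central. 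The correct observation (which is what the paper's elimination amounts to) is that when $M_0=\alpha_1\beta_2-\alpha_2\beta_1=0$ the $\gamma_3$-part of $[x,y]$ is a power of $v_1^{\alpha_1}v_2^{\alpha_2}$, so $(E_1,E_2)$ is proportional to $(\alpha_1,\alpha_2)$; hence $E_2=0\ne E_1$ forces $\alpha_2=\beta_2=0$ and therefore $M_2=0$, contradicting $M_2=1$ --- so your target $z_2v_1$ does work, but not by the identity you wrote. In Case A-non your target $z_2v_1^{n_1}v_2^{n_2}$ leads to two simultaneous non-residue conditions and a counting argument that depends on $p\bmod 4$; the paper instead keeps the two free exponents on $z_2$ and $v_1$, taking $[b,c]^{\lambda}[a,[a,b]]^{\nu}$ with $-\lambda\nu^{-1}$ a non-residue, which collapses every sub-case to the single equation $(\beta_2\beta_1^{-1})^2=-\lambda\nu^{-1}$ and avoids the extra bookkeeping; you may want to adopt that choice. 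The rest (Case B, the exhaustiveness of the normal forms via the $GL_2$-action and $c\mapsto cu^r$, and the final quotient argument) matches the paper and is sound.
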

 \begin{proof}
We start by noticing that $|\gamma_3(G)| \le p^2$. Let $G = \gen{a, b, c}$. Then $\gamma_2(G)= \langle [a,b], [b,c], [a,c], $ $\gamma_3(G) \rangle$. Without loss of generality we can assume that $Z(G)= \langle [b,c], [a,c], \gamma_3(G) \rangle$. Indeed, modulo $\Z(G)$, we can assume that $c$ commutes with $a$ and $b$.  This, by Hall-Witt Identity, implies that  $c \in \C_G(\gamma_2(G))$. Now we consider two cases, namely, $|\gamma_3(G)|$ is $p$ or $p^2$.

First assume  $|\gamma_3(G)| = p$. By suitably modifying the generating set $\{a, b, c\}$, if necessary, we can assume that $[a,[a,b]] \neq 1$ and $[b,[a,b]] = 1$. Hence  $\gamma_3(G) = \gen{[a,[a,b]]}$. Now we claim that $[b,c][a,[a,b]]$ is not a commutator.
Contrarily  assume that 
$$[b,c][a,[a,b]] = [a^{\alpha_1}{b}^{\alpha_2}c^{\alpha_3}[a,b]^{\alpha_4}, a^{\beta_1}{b}^{\beta_2}c^{\beta_3}[a,b]^{\beta_4}],$$
where $\alpha_i, \beta_i \in \mathbb{F}_p$. After solving  and comparing the powers, we get
\begin{eqnarray}
\beta_2 \alpha_1 - \alpha_2 \beta_1 & = & 0, \label{eqnn1} \\
\beta_3 \alpha_2 - \alpha_3 \beta_2 &=& 1, \label{eqnn2}\\
\beta_3 \alpha_1 - \alpha_3 \beta_1 &=& 0, \label{eqnn3}\\
\alpha_2{ \binom {\beta_1}2} - \beta_2{\binom {\alpha_1}2} + \beta_4 \alpha_1 - \alpha_4 \beta_1 &=&1.\label{eqnn4}
\end{eqnarray}
If  $\alpha_2 = 0$, then from \eqref{eqnn2}, we get $\alpha_3, \beta_2 $ are non zero. So \eqref{eqnn1} gives  $\alpha_1=0$, which in turn, using \eqref{eqnn3} gives  $\beta_1=0$. But these values contradict \eqref{eqnn4}.
Now let $\alpha_2 \neq 0$.  Then from \eqref{eqnn1},  $\beta_1= \beta_2 \alpha_1 {\alpha_2}^{-1}$. Substituting $\beta_1$ in \eqref{eqnn4}, we  get
$$\alpha_1\big(\beta_4 +\alpha_2^{-1} \beta_2 ((\alpha_1 \beta_2 - \alpha_2 \beta_2)/2-\alpha_4)\big)=1;$$ 
hence $\alpha_1
\neq 0$.   Substituting $\beta_1$ in \eqref{eqnn3}, we get
$\alpha_1(\beta_3-\alpha_3\beta_2{\alpha_2}^{-1})=0$. Hence
$\beta_3=\alpha_3\beta_2{\alpha_2}^{-1} $, which contradicts \eqref{eqnn2}. Our claim is now settled.

Now we assume $|\gamma_3(G)|=p^2$.  Thus $\gamma_3(G)= \langle [a,[a,b]],[b,[a,b]]\rangle$.
Notice that if neither $[a, c]$ nor $[b, c]$ lies in $\gamma_3(G)$, then one of them will be a power  of the other modulo $\gamma_3(G)$. Then we can modify the generating set $\{a, b, c\}$ such that $[a, c] \in \gamma_3(G)$, without disturbing other setup.
Let $[a,c]=[a,[a,b]]^{t_1}[b,[a,b]]^{t_2}$ for some $t_1,t_2 \in \mathbb{F}_p$. Then $[a,c[a,b]^{-t_1}]=[b,[a,b]]^{t_2}$. By replacing
$c$ with $c[a,b]^{-t_1}$, we get a modified generating set for $G$, which we still call $\{a, b, c\}$, such that
$[a,c]=[b,[a,b]]^{t_2}$ and $c \in \C_G(\gamma_2(G))$. If $t_2 =0$, then $[a,c] = 1$, otherwise we can replace $c$ by $c^{t_2^{-1}}$, and assume that $t_2 = 1$; hence $[a,c]=[b,[a,b]]$.

Let us assume that for given $\lambda, \nu \in \mathbb{F}_p^*$, there exist $\alpha_i, \beta_i \in \mathbb{F}_p$ such that
$$[b,c]^{\lambda}[a,[a,b]]^{\nu} = [a^{\alpha_1}b^{\alpha_2}{c}^{\alpha_3}[a,b]^{\alpha_4}, a^{\beta_1}b^{\beta_2}{c}^{\beta_3}[a,b]^{\beta_4}].$$
 After solving  and comparing the powers on both sides, we get
\begin{eqnarray} \label{numeric6}
\beta_2 \alpha_1 - \alpha_2 \beta_1& =& 0 \label{numeric6}\\
\beta_3 \alpha_2 - \alpha_3 \beta_2 &=& \lambda  \label{numeric7}\\
\beta_1\beta_2\alpha_2-\alpha_1\alpha_2\beta_2-\alpha_1{ \binom
{\beta_2}2} + \beta_1{\binom {\alpha_2}2} + (\beta_3 \alpha_1 - \alpha_3
\beta_1)+\beta_4 \alpha_2 - \alpha_4 \beta_2&=&0 \label{numeric8}\\
\alpha_2{ \binom {\beta_1}2} - \beta_2{\binom {\alpha_1}2}  +\beta_4
\alpha_1 - \alpha_4 \beta_1&=&\nu.
\label{numeric9}
\end{eqnarray}
Using  \eqref{numeric6}  in (\ref{numeric8}) and \eqref{numeric9}, we, respectively,  get
\begin{eqnarray}
( \beta_1\beta_2\alpha_2-\alpha_1\alpha_2\beta_2) / 2 + (\beta_3 \alpha_1
- \alpha_3 \beta_1)+\beta_4 \alpha_2 - \alpha_4 \beta_2&=& 0
\label{numeric10}\\
( \beta_1\beta_2\alpha_1-\alpha_1\alpha_2\beta_1) / 2 +\beta_4 \alpha_1 -
\alpha_4 \beta_1&=&\nu.
\label{numeric11}
\end{eqnarray}

 We proceed in two different cases, namely, $[a, c] = 1$ and $[a,c]=[b,[a,b]]$.

{\it Case(1).} Let $[a, c] = 1$. Then \eqref{numeric10} reduces to
\begin{equation}\label{numeric12}
( \beta_1\beta_2\alpha_2-\alpha_1\alpha_2\beta_2) / 2 + \beta_4 \alpha_2 - \alpha_4 \beta_2 =  0.
\end{equation}
 If $\alpha_2=0$, then by \eqref{numeric7}, we get $\beta_2 \ne 0$, which using,  \eqref{numeric6}, gives $\alpha_1=0$. Hence  by \eqref{numeric12},  $\alpha_4=0$, which contradicts \eqref{numeric11}. If $\alpha_2 \neq 0$ and  $\alpha_1=0$, then by \eqref{numeric6}
$\beta_1=0$, which contradicts \eqref{numeric11}. Hence,  $\alpha_2 \neq 0$ implies $\alpha_1 \ne 0$. By symmetry we can take both $\beta_1$ and $\beta_2$  non zero.  Hence we can assume that  $\alpha_1,\alpha_2, \beta_1, \beta_2$ are all nonzero. 

Computing the value of $\alpha_4$ from \eqref{numeric12} and substituting in \eqref{numeric11}, we get
\begin{equation}
( \beta_1\beta_2\alpha_1-\alpha_1\alpha_2\beta_1) / 2+\beta_4 \alpha_1 - (( \beta_1\alpha_2-\alpha_1\alpha_2) / 2+\beta_4 \alpha_2{\beta_2}^{-1}) \beta_1=\nu. \nonumber
\end{equation}
Using \eqref{numeric6}, it is easy to see that the left hand side of the
preceding equation is zero, which contradicts the choice of $\nu$. Hence for any $\lambda, \nu \in \mathbb{F}_p^*$, $[b,c]^{\lambda}[a,[a,b]]^{\nu}$ is not a commutator.

{\it Case(2).}  Let $[a,c]=[b,[a,b]]$.  If $\alpha_2=0$, then by \eqref{numeric6}, $\alpha_1=0$, and the  equations \eqref{numeric7}, \eqref{numeric10} and \eqref{numeric11}, respectively, reduces to
\begin{eqnarray*}
 \alpha_3\beta_2 &=& -\lambda\\
\alpha_3\beta_1+\alpha_4\beta_2 &=& 0\\
\alpha_4\beta_1 &=& -\nu.
\end{eqnarray*}
Solving this we get $ ({\beta_1}^{-1}\beta_2)^2 = - \lambda \nu^{-1}$, which is not possible for a choice of $\lambda$ and $\mu$ such that $- \lambda \nu^{-1}$ is a non-quadratic residue mod $p$. Hence  for any such $\lambda,  \nu \in \mathbb{F}_p^*$, 
$[b,c]^{\lambda} [a,[a,b]]^{\nu}$ is not a commutator. If $\alpha_2 \neq 0$ and  $\alpha_1=0$, then by \eqref{numeric6}
$\beta_1=0$, which contradicts \eqref{numeric11}. Hence,  $\alpha_2 \neq 0$ implies $\alpha_1 \ne 0$. By symmetry we can take both $\beta_1$ and $\beta_2$  non zero.  Hence we can assume that  $\alpha_1,\alpha_2, \beta_1, \beta_2$ are all nonzero. 

Using  \eqref{numeric6} in \eqref{numeric10} and  \eqref{numeric11},  we,  respectively,  get   
$$\alpha_4=( \beta_1\alpha_2-\alpha_1\alpha_2) / 2 + {\beta_2}^{-1}(\beta_3 \alpha_1 - \alpha_3 \beta_1)+\beta_4 \alpha_2{\beta_2}^{-1}$$
and 
$$\alpha_4=(\beta_2\alpha_1-\alpha_1\alpha_2) / 2+ \beta_4 \alpha_1 {\beta_1}^{-1} -
\nu{\beta_1}^{-1}.$$
Equating these equations and using \eqref{numeric6}, we get $(\beta_3 \alpha_1 - \alpha_3 \beta_1)= - \nu{\beta_1}^{-1}\beta_2$. Multiply both sides by
${\beta_1}^{-1}\beta_2$ and  using \eqref{numeric6} and \eqref{numeric7}, we further get 
$({\beta_1}^{-1}\beta_2)^2= - \lambda\nu^{-1}$. Hence, as above, for $\lambda$ and $\nu$ such that $- \lambda \nu^{-1}$ is a non-quadratic residue mod $p$,
$[b,c]^{\lambda} [a,[a,b]]^{\nu}$ is not a commutator. The proof is now complete by taking $G/H$, where $H$ is any subgroup of $\Z(G)$ of order $p$, and using Theorem \ref{dmr77} and Lemma \ref{prelemma5}. \hfill $\Box$

\end{proof}

 \section{Proof of Theorem A}
 
 In this section we present a proof of Theorem A. The following result is the last thread to accomplish our endeavour.
 \begin{lemma}\label{main1}
 Let $L$ be a finite $p$-group of order at least $p^8$ and nilpotency class $3$ with $b(L)=3$, $|\Z(L)| = p^3$, $\Z(L) \le \gamma_2(L)$  and $\gamma_2(L)$ elementary abelian of order $p^4$. Then $\K(L)=\gamma_2(L)$.
\end{lemma}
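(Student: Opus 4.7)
My plan is to invoke Theorem \ref{keyresult} and treat each structural case separately. Since $L$ has nilpotency class $3$ and $b(L) = 3$, one of Cases (i)--(iii) of that theorem holds. Case (i) would require a class-$3$ subgroup $G \le L$ of order $p^6$ with $\gamma_2(G)$ elementary abelian of order $p^4$, but Lemma \ref{p6lem} shows that every such group has class $4$. Hence Case (i) does not occur.

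In Case (ii), $L = G \times_M K$ is a central product with $|G| = p^5$, $G$ a 2-generator group of class $3$ (so $|\gamma_2(G)| = p^3$, elementary abelian as a subgroup of $\gamma_2(L)$), and $K$ isoclinic to an extraspecial $p$-group. Rodney's Theorem \ref{dmr77} gives $\K(G) = \gamma_2(G)$, while $\K(K) = \gamma_2(K)$ is immediate since $|\gamma_2(K)| = p$. Lemma \ref{prelemma4} then yields $\K(L) = \gamma_2(L)$.

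Case (iii) is the main case: $L = G \rtimes_M K$, with $G \le L$ a 3-generator subgroup of order $p^7$ and class $3$ satisfying $\gamma_2(G) = \gamma_2(L)$, and extra generators $x_j$ of $K$ satisfying $[x_j, L] = H$ for a normal subgroup $H$ of order $p$, necessarily equal to $\gamma_3(L) = \gamma_3(G)$. If $|\Z(G)| \le p^2$, Lemma \ref{p7lem1} directly gives $\K(G) = \gamma_2(G) = \gamma_2(L)$. The principal obstacle is the subcase $|\Z(G)| = p^3$, in which Lemma \ref{p7lem2} exhibits explicit elements of $\gamma_2(G)$ (e.g. $[b,c]^{\lambda}[a,[a,b]]^{\nu}$) that are not commutators in $G$ alone. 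My plan here is to apply Lemma \ref{prelemma3}: since $L/H$ has class $2$ with $\gamma_2(L/H)$ elementary abelian of order $p^3$, Rodney's theorem gives $\K(L/H) = \gamma_2(L/H)$, so there exist $y_1, \ldots, y_n \in L$ with $\gamma_2(L)/H = \bigcup_i [y_iH, L/H]$.

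The $x_j$'s supply the additional room needed to close the gap: because $[x_j, L] = H \le \Z(L)$, multiplying $u$ by $x_j^s$ changes $[u, v]$ only by an element of $H$ depending linearly on $s$, so one can realize any element of $H$ as such a shift whenever the corresponding linear form is nonzero. Moreover, $x_jH \in \Z(L/H)$, so replacing $y_i$ by $y_i x_j^s$ leaves the cover $\gamma_2(L)/H = \bigcup_i [y_iH, L/H]$ intact while potentially enlarging $[y_i, L]$ to contain $H$. The hardest part is arranging these replacements simultaneously for every $y_i$, which requires a case-analysis according to whether $K$ is abelian or isoclinic to extraspecial, and exploits the fact that $\C_L(K) < L$ (no $x_j$ being central). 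Once $H \subseteq \bigcap_i [y_i, L]$ is secured, Lemma \ref{prelemma3} delivers $\K(L) = \gamma_2(L)$ and completes the proof.
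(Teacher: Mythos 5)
Your skeleton agrees with the paper's: reduce via Theorem \ref{keyresult}, kill the central-product case with Theorem \ref{dmr77} and Lemma \ref{prelemma4}, and finish the main case with Lemma \ref{prelemma3} after passing to $L/H$. Cases (i) and (ii) are handled correctly. But two things go wrong in case (iii), which is where all the content of the lemma sits. First, your claim that $H$ is \emph{necessarily} $\gamma_3(L)$ (so that $L/H$ has class $2$) is false: this holds only when $|\gamma_3(L)|=p$; when $|\gamma_3(L)|=p^2$ the paper shows $H\not\le\gamma_3(L)$ (there $H=\gen{[b,c]}$ with $[b,c]\notin\gamma_3(L)$) and $L/H$ has class $3$. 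Rodney's theorem still applies to $L/H$, but your setup misses this entire branch, including the configuration in which every $x_j$ centralizes $G$, so that $L$ is a central product of $G$ with a necessarily non-abelian $K$ and one must work inside a suitable subgroup $N=\gen{a,b,c,x_i,x_j}$ of order $p^9$.

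Second, and more seriously, the decisive step is asserted rather than proved. To invoke Lemma \ref{prelemma3} you need elements $y_1,\dots,y_n$ satisfying \emph{simultaneously} $\gamma_2(L)/H=\bigcup_i[y_iH,L/H]$ and $H\subseteq[y_i,L]$ for every $i$. Rodney's theorem only gives surjectivity of the commutator map on $L/H$, with no control over which cosets serve as first arguments, and your repair mechanism (replace $y_i$ by $y_ix_j^{s}$) is not shown to achieve $H\subseteq[y_ix_j^{s},L]$: for $u\in\C_L(y_i)$ one computes $[y_ix_j^{s},u]=[x_j,u]^{s}$, so one needs something like $[x_j,\C_L(y_i)]=H$, i.e.\ $\C_L(y_i)\not\le\C_L(x_j)$, and nothing in your sketch guarantees this for every member of an arbitrarily chosen cover, nor that the adjustments can be made for all $y_i$ at once. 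This is precisely the work the paper carries out: it imports the normalized generating sets from the proof of Lemma \ref{p7lem2} (e.g.\ $[c,[a,b]]=1$, $[a,d]=1$ in one branch; $c\in\C_G(\gamma_2(G))$, $[b,d]=[c,d]=1$ in the other), exhibits explicit covers by first arguments of the form $a^{\epsilon}c^{i}$, respectively $a^{\epsilon}b^{i}[a,b]^{j}$ (and $ac^{i}d$, $cd$, etc.\ in the central-product configuration), and then verifies $H\subseteq[y_i,L]$ by explicit commutator identities, separately for $|\gamma_3(L)|=p$ and $|\gamma_3(L)|=p^2$ and for the semidirect versus central-product shape of $L$. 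Without constructing such a cover and checking both properties, your argument does not close.
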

\begin{proof}
Notice that $|\gamma_3(L)|  \le p^2$. We first assume that $|\gamma_3(L)| = p$. Then by Theorem \ref{psthm} there exists a normal subgroup $H$ of $L$ such that $|L/H : \Z(L/H)| = p^3$. If $H \neq \gamma_3(L)$, then it follows from Theorem \ref{keyresult} that $L$ admits a $2$-generator subgroup $G$ such that $\gamma_2(G) = \gamma_2(L)$, which is not possible as 
$|\gamma_2(L)/\gamma_3(L)| = p^3$. Hence $H = \gamma_3(L)$, and therefore, again by Theorem \ref{keyresult}, $L$ admits a $3$-generator subgroup $G$ of order $p^7$ such that $\gamma_2(G) = \gamma_2(L)$.

 If $|L| = p^8$, then $L = \gen{a, b, c, d}$ such that $G = \gen{a, b, c}$ and $[d, G] = H$.  If $|L| \ge p^9$, then, for some integer $k \ge 2$, $L = \gen{a, b, c, x_1, \ldots, x_k}$ such that $G = \gen{a, b, c}$ with $\gamma_2(G) = \gamma_2(L)$ and $[x_i, L] = H$ for $1 \le i \le k$.  First assume that $[x_i, G] = H$ for some $1 \le i \le k$. Then the subgroup $M:=\gen{a, b, c, d}$, where $d = x_i$, is of order $p^8$ such that $[d, G] = H$ and $\gamma_2(M) = \gamma_2(L)$. Hence it is sufficient to work with $M$. So this case reduces to the preceding situation when $|L|= p^8$.

It now follows from the proof of Lemma \ref{p7lem2} that we can modify the generating set for $G$  such that $\gamma_2(L) = \gamma_2(G)=\langle [a,b],[a,c],[b,c],[a,[a,b]]\rangle$ and $[c, [a, b]] =1$. Notice that $L/H$ is of nilpotency class  $2$, and therefore $\gamma_2(L/H)=\langle [\bar a, \bar b],[\bar a, \bar c],[\bar b, \bar c]\rangle$, where $\bar x = xH$ for all $x \in L$. Also notice that $\bar d \in \Z(L/H)$.

If $[a,d]=[a,[a,b]]^t$ for some $t \in \mathbb{F}_p$, then $[a,d[a,b]^{-t}]=1$. Replacing $d$ by $d[a,b]^{-t}$, we can assume that $[a,d]=1$. If $[c, d] \ne 1$, then $[c, d] = [a, [a, b]]^r$ for some $r \in \mathbb{F}_p^*$.  Let $\alpha_i \in \mathbb{F}_p$, $1 \le i \le 4$. If  $\alpha_1 \ne 0$, then modulo $H$ we can write 
$$[a,b]^{\alpha_1}[b,c]^{\alpha_2}[a,c]^{\alpha_3}=[ac^{{-\alpha_2}{\alpha_1}^{-1}}, b^{\alpha_1}c^{\alpha_3}].$$
And
$$[a,[a,b]]^{\alpha_4}=[ac^{{-\alpha_2}{\alpha_1}^{-1}}, [a,b]^{\alpha_4}].$$
If $\alpha_1=0$, then modulo $H$ we can write
$$[b,c]^{\alpha_2}[a,c]^{\alpha_3}=[ c, a^{-\alpha_3}b^{-\alpha_2}].$$
And
$$[a,[a,b]]^{\alpha_4}=[c,d^{{\alpha_4}r^{-1}}].$$
Thus for $i \in \mathbb{F}_p$ and $\epsilon = 0, 1$, we get
$$\gamma_2(L)/H = \bigcup \limits_{\epsilon, i} \ [{\bar a}^{\epsilon} \bar c^{i}, \bar L]$$
and
$$H \subseteq \bigcap \limits_{\epsilon, i}  \ [a^{\epsilon}  c^{i}, L],$$
where $\bar x = xH$ for $x \in L$, and $i$ and $\epsilon$ are not simultaneously zero.
Hence, by Lemma \ref{prelemma3}, $\K(L) = \gamma_2(L)$. Finally, if $[c, d] =1$, then $[b , d] \ne 1$. The assertion now  follows on the same lines as above.

Now we consider the remaining case for the group $L$ with $|L| \ge p^9$, i.e., $[x_i, G] = 1$ for all $1 \le i \le k$. Then $L$ is a central product of $G$ and $K := \gen{x_1, \ldots, x_k}$ amalgamating  $H$. Since $\Z(L) \le \gamma_2(L)$, $K$ can not be abelian. Thus $H = \gen{[x_i, x_j]}$ for some $1 \le i < j \le k$. Then the subgroup $N:=\gen{a, b, c, d, e}$, where $d = x_i$ and $e = x_j$, is of order $p^{9}$ such that $\gamma_2(N) = \gamma_2(L)$. Now on the lines of the preceding case, for $i \in \mathbb{F}_p$, it is not difficult to see that 
$$\gamma_2(N)/H = \bigcup \limits_{i} \ [ \bar a  \bar c^{i} \bar d, \bar N] \ \bigcup \ [\bar c \bar d, \bar N] $$
and
$$H \subseteq \bigcap \limits_{ i} \ [ a  c^{i} d, N]  \ \bigcap \ [c d, N],$$
where  $\bar x = x H$ for $x \in N$. Hence, again by Lemma \ref{prelemma3}, $\K(N) = \gamma_2(N)$.

Now assume that  $|\gamma_3(L)| = p^2$. As in the above case, we can show that $H \not\le \gamma_3(L)$. Then, by Theorem \ref{keyresult}, either (i) $L$  admits a $2$-generator subgroup $G$ of order $p^5$ and nilpotency class $3$ such that $L$ is a central product of $G$ and subgroup $K$ with $|\gamma_2(K)| = p$ or (ii) $L$ admits a $3$-generator subgroup $G$ of  order $p^7$ and nilpotency class $3$ such that $\gamma_2(G) = \gamma_2(L)$ and  $L$ is an amalgamated semidirect product of $G$ and a subgroup $K$ of nilpotency class at most $2$. In  case  (i), it follows from Lemma \ref{prelemma4} that $\K(L) = \gamma_2(L)$. So assume (ii).

If $|L| = p^8$, then $L = \gen{a, b, c, d}$ such that $G = \gen{a, b, c}$ and $\gen{[d, G]} = H$. If $|L| \ge p^9$, then, for some integer $k \ge 2$, $L = \gen{a, b, c, x_1, \ldots, x_k}$ such that $G = \gen{a, b, c}$ and $\gen{[x_i, L]} = H$ for $1 \le i \le k$. 
First assume that $[x_i, G] = H$ for some $1 \le i \le k$. Then the subgroup $M:=\gen{a, b, c, d}$, where $d = x_i$, is of order $p^8$ such that $[d, G] = H$ and $\gamma_2(M) = \gamma_2(L)$. As observed above, it is sufficient to work with the situation when $|L|= p^8$. 

As explained in the proof of Lemma \ref{p7lem2}, we can assume that 
$H= \gen{[b, c]}$, $$\gamma_3(G) = \gen{[a, [a, b]], [b, [a,b]]},$$
 $[a, c] \in \gamma_3(G)$ and $c \in \C_G(\gamma_2(G))$.  If $[b,d]=[b,c]^t$ for some $t \in \mathbb{F}_p$, then $[b,dc^{-t}]=1$. Set $d' = dc^{-t}$. 
If $[c,d']=[b,c]^s$ for some $s \in \mathbb{F}_p$, then $[c, d'b^s] =1$. Replacing $d$ by $d'b^s$, we can assume that $[b, d] = [c, d] = 1$.  Let $[a,d] =[b,c]^r$ for some $r \in \mathbb{F}_p^*$. Let $\alpha_1,\alpha_2,\alpha_3,\alpha_4 \in \mathbb{F}_p$. If $\alpha_3 \neq 0$, then, modulo $H$ we can write
 $$[a,b]^{\alpha_1}[b,[a,b]]^{\alpha_2}[a,[a,b]]^{\alpha_3} = [a b^{{\alpha_2}{\alpha_3}^{-1}}  [a, b]^{\frac{1-\alpha_1}{2}}, b^{\alpha_1}[a,b]^{\alpha_3}].$$
 And 
$$[b,c]^{\alpha_4}=[a b^{{\alpha_2}{\alpha_3}^{-1}}  [a, b]^{\frac{1-\alpha_1}{2}},d^{{\alpha_4}r^{-1}}].$$
If $\alpha_3 = 0$, then modulo $H$ we can write
$$[a,b]^{\alpha_1}[b,[a,b]]^{\alpha_2}=[ b [a, b]^{- \frac{\alpha_1 +1}{2}},a^{-\alpha_1}[a,b]^{\alpha_2}
].$$
And
$$[b,c]^{\alpha_4}=[b [a, b]^{- \frac{\alpha_1 +1}{2}}, c^{\alpha_4}].$$
Thus, for  $i, j \in \mathbb{F}_p$ and $\epsilon = 0, 1$ such that  $i$ and $\epsilon$ are not simultaneously zero,  we get
$$\gamma_2(L)/H = \bigcup \limits_{ \epsilon, i, j} \ [ \bar a^{\epsilon} {\bar b}^{i}  [a, b]^j, \bar L]$$
and
$$H \subseteq  \bigcap \limits_{ \epsilon, i, j} \ [ a^{\epsilon} b^{i}   [a, b]^j, L] ,$$
 where  $\bar x = x H$ for $x \in L$.
Hence $\K(L) = \gamma_2(L)$.

Finally assume that $[x_i, G] = 1$ for all $1 \le i \le k$. In this case, $L$ is a central product of $G$ and $K := \gen{x_1, \ldots, x_k}$ amalgamating some subgroup containing $H$. Since $\Z(L) \le \gamma_2(L)$, $K$ can not be abelian. Thus $H = \gen{[x_i, x_j]}$ for some $1 \le i < j \le k$. Then the subgroup $N:=\gen{a, b, c, d, e}$, where $d = x_i$ and $e = x_j$, is of order $p^{9}$ such that $\gamma_2(N) = \gamma_2(L)$. For $i \in \mathbb{F}_p$, it is again easy to see that 
$$\gamma_2(N)/H = \bigcup \limits_{i} \ [\bar a  \bar b^{i}  \bar d \; [\bar a, \bar b]^{\frac{1-\alpha_1}{2}}, \bar N] \ \bigcup \ [\bar b \bar d \; [\bar a, \bar b]^{ - \frac{\alpha_1 +1}{2}}, \bar N] $$
and
$$H \subseteq \bigcap \limits_{ i} \ [a b^{i} d \; [a, b]^{\frac{1-\alpha_1}{2}}, N]  \ \bigcap \ [b d \; [a, b]^{- \frac{\alpha_1 +1}{2}}, N],$$
where  $\bar x = x H$ for $x \in N$.  Thus $\K(N) = \gamma_2(N)$, and the proof is complete.   \hfill $\Box$

\end{proof}

We are now ready to prove Theorem A.

\noindent {\it Proof of Theorem A.} Let $G$ be a finite $p$-groups such that $\gamma_2(G)$ is of order $p^4$ and exponent $p$. Also let $\Z(G) \le \gamma_2(G)$. Notice that the nilpotency class of $G$ is at most $5$. By Remark \ref{remark} we have  $b(G) \ge 3$. Notice that $|G| \ge p^6$.  If $b(G) = 4$, then $\K(G) = \gamma_2(G)$. So we assume that $b(G) = 3$. If the nilpotency class of $G$ is at most $3$, then $\gamma_2(G)$ is abelian. When the nilpotency class of $G$ is $2$,  the assertion follows  from Lemmas \ref{cl2lemma1} - \ref{cl2lemma2}.  Now let the nilpotency class of $G$ be $3$. It follows from \cite{rJ80} that there is no group of order $p^6$ and nilpotency class $3$ satisfying the given hypothesis. Hence $|G| \ge p^7$.  If  $|\Z(G)| \le p^2$, then $\K(G) = \gamma_2(G)$ by Lemma \ref{p7lem2}. If $|G| \ge p^8$ and $|\Z(G)| = p^3$, then by Lemma \ref{main1} we have $\K(G) = \gamma_2(G)$. Finally if $|G| = p^7$ and $|\Z(G)| = p^3$, then by Lemma \ref{p7lem2} we have $\K(G) \ne \gamma_2(G)$. It only remains to handle the cases when the nilpotency class of $G$ is $4$ or $5$.

Let the nilpotency class of $G$ be $4$ and $b(G) = 3$.  If $|G| = p^6$, then it follows from Lemma \ref{p6lem} that  $\K(G) = \gamma_2(G)$ if and only if $|\Z(G)| = p$. So assume that $|G| \ge p^7$. By Lemma \ref{prelemma2} we have $|\Z(G)| \le p^2$. It follows from Theorem \ref{psthm} that $G$ admits a normal subgroup $H$ of order $p$ such that $|G/H:\Z(G/H)| = p^3$. The only choice for $H$, in this case, is $\gamma_4(G)$. If not, then $G/H$ will be of nilpotency class $4$, which is not possible as $(G/H)/\Z(G/H)$ can have nilpotency class at most $2$.  Thus $G/H$ has nilpotency class $3$. For the existence of such a group $G$, it is necessary that $(\gamma_2(G)/H ) \cap \Z(G/H)$ is of order $p^2$. Hence there exists a $2$-generator subgroup $S$ of $G$ such that $\gamma_2(G) = \gamma_2(S)$. Since $\gamma_2(S)/\gamma_3(S)$ is cyclic and $[\gamma_2(S), \gamma_3(S)] = 1$, it follows that $\gamma_2(S)$ is abelian.  Hence $\gamma_2(G)$ is abelian. Now invoking Theorem \ref{keyresult}, there exists a $2$-generator subgroup $T$ of $G$ such that $|T| = p^6$, $\gamma_2(T) = \gamma_2(G)$ and  $G$ is an amalgamated semidirect product of $T$ and a subgroup $K$ with $|\gamma_2(K)| \le p$.  Also $K = \gen{x_1, \ldots, x_k}$, for some $k \ge 1$ such that $[x_i, L] = H$ for $1 \le i \le k$. If $[x_i, T] = H$ for some $1 \le i \le k$, then the subgroup $M:=\gen{a, b, c}$, where $c = x_i$ is of order $p^7$ such that $\gamma_2(M) = \gamma_2(G)$. Hence it follows from  Lemma \ref{p7lem3} that $\K(G) = \gamma_2(G)$. Now assume that $[x_i, T] = 1$ for all $1 \le i \le k$.  Since $\Z(G) \le \gamma_2(G)$, it follows that $K$ is non-abelian. Thus $G$ is a central product of $T$ and $K$ amalgamating a subgroup containing $H$. If $|\Z(G)| = p$, then $|\Z(T)| = p$, and therefore by Lemma \ref{p6lem} we have $\K(T) = \gamma_2(T)$. Hence $\K(G) = \gamma_2(G)$.  Now we take the case when $|\Z(G)| = p^2$. 
Notice that $\gamma_2(K) = H$, and therefore there exists $x_i, x_j \in K$ such that $H = \gen{[x_i, x_j]}$ for some $1 \le i < j \le k$. Let $N:= \gen{a, b, c, d}$, where $c= x_i$ and $d = x_j$. Then $\gamma_2(G) = \gamma_2(N)$. Set $\bar N = N/H$.  As observed in the proof of Lemma \ref{p7lem3}, 
$$\gamma_2(\bar{N}) = \gen{[\bar a, \bar b], [\bar a, [\bar a, \bar b]],  [\bar b, [\bar a, \bar b]]}.$$
It is now not difficult to see that for $i, j \in \mathbb{F}_p$ we have
$$\gamma_2(\bar{N}) =   \bigcup \limits_{{i},{j}} \ [{\bar{a}}^i
{\bar{b}}^j \bar c, \bar N].$$
An easy computation also shows that $H \subseteq  [a^i b^j  c,  N]$ for all $i,
j \in \mathbb{F}_p$. Hence $\K(N)=\gamma_2(N)$ by Lemma \ref{prelemma3},
and therefore we have $\K(G)=\gamma_2(G)$.

Finally let  the nilpotency class of $G$ be $5$. We claim that  $b(G) = 4$, and hence we are done. Contrarily assume that $b(G) = 3$. Since the nilpotency class of $G/\Z(G)$ is $4$ and $\Z(G) \le \gamma_2(G)$, we have $|\Z(G)| = p$. Hence, in view of Theorem \ref{psthm}, the only choice for normal subgroup $H$ of $G$ such that  $|G/H:\Z(G/H)|=p^3$ is $\Z(G)$. Thus $|G/\Z(G) : \Z_2(G)/\Z(G)| = p^3$,  which implies that $|G/\Z_2(G)| = p^3$, where $\Z_2(G)$ denotes the second center of $G$.   This contradicts the fact that the nilpotency class of $G/\Z_2(G)$ is $3$. Our claim is now settled, and the proof of the theorem is complete.   \hfill $\Box$



\section{$2$-groups and proof of Theorem B}

We start with the following result of Wilkens \cite{bW07} on $2$-group of breadth $3$.

\begin{thm}\label{bettina}
Let $P$ be a finite $2$-group of breadth $3$. Then one of the following holds:
	
{\rm (i)} $|\gamma_2(P)| \le 2^3$.
	
{\rm (ii)} $|P: \Z(P)| \le 2^4$.
	
{\rm (iii)} $|\gamma_2(P)| = 2^4$ and there is $R$, $R\leq \Omega_1(\Z(P))$, $|R|= 2$, such that $|P/R : \Z(P/R)| \leq 2^3$. 
	
{\rm (iv)} $|\gamma_2(P)| = 2^4$ and $P$ is a central product $G \C_P(G)$, with $\C_P(G)$ is abelian and $G$ lies in  one of the following five classes of groups: 

{\rm (1)} There are $i,j \in \mathbb{N}$ with $G\cong \hat{G} / \gen{x^{4i}, y^{4j}}$, where $\gamma_4(\hat G) = 1 =   \mho_2(\gamma_2(\hat{G})) = \mho_1(\gamma_3(\hat{G}))$, and $\hat{G}$ is free in the category of these groups.

{\rm (2)} There are $i,j \in \mathbb{N}$ with $G\cong \hat{G} / \gen{x^{4i}, y^{2j}}$, where 
$$\hat{G}= \gen{x,y \mid [x,y]^y = [y,x], [y, {}_2x]^2=1=[y,{}_3x]^2=[y, {}_2x, y]= [y, {}_3x, y] =[y, {}_4x]}.$$

{\rm (3)} There are $i,j, k \in \mathbb{N}$ with $G\cong \hat{G} / \gen{x^{4i}, y^{2j}, z^{2k}}$, $\hat{G}= \gen{x,y,z}$ has $\gamma_4(G)=1=\mho_1(\gamma_3(G))$, and,  apart from that, is defined by the relations $$ [x, y]^4=1=[x,y]^2[x,y,y]=[x,y,z]=[x,z]^2=[x,z,z]=[x,z,y]=[x,z,x]=[y,z].$$

{\rm (4)} There are $i,j,k,l \in \mathbb{N}$ with $G\cong \hat{G} / \gen{x^{2i}, y^{2j}, a^{2k}, t^{2l}}$, $\hat{G}=\gen{a,t,x,y}$ of nilpotency class $3$ with additional relations $[x,a]^2=1=[x,a,w]=[y,t,w]=[y,t]^2$, $w \in \{a,t,x,y \}$, $[x,t]=[y,t][y,a]=1$, $[x,y]^4=1=[x,y,a]=[x,y,t]=[x,y,y][x,y]^2=[x,y,x][x,y]^2$, $[t,a] \in \gen{[x,y]^2}$.

{\rm (5)} There are $i,j,k,l,m \in \mathbb{N}$ such that $G= \hat{G}/{\gen{x^{2i}, v^{2j}, {v_1}^{2k}, {v_2}^{2l}, {v_3}^{2m} }}$, and $\hat{G}= \gen{a,v, v_1, v_2, v_3}$ is of nilpotency class $2$ with $\Phi(\hat{G}) \leq \Z(\hat{G})$ and is otherwise defined by $[v_2, x]=1=[v_1,v]=[v_3,x][v_3, v]$, $[v_i, v_j] \leq \gen{[v_3,x]}$. 
\end{thm}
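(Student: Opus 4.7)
The plan is to argue by reduction, attempting to place $P$ into cases (i)--(iv) in order. The starting point is the breadth hypothesis $b(P)=3$, meaning $|P:\C_P(x)| \le 2^3$ for every $x \in P$. I would first obtain a coarse bound $|\gamma_2(P)| \le 2^5$ (and refine to $2^4$ in the cases that survive below) by adapting the Parmeggiani--Stellmacher argument of Theorem \ref{psthm} from odd $p$ to $p=2$; the subtle point is that characteristic $2$ forces us to keep track of squaring, since for $y \in P$ the element $y^2$ can create new commutators via $[y^2,x] = [y,x]^2[y,x,y]$. If $|\gamma_2(P)| \le 2^3$ we land in (i), so henceforth assume $|\gamma_2(P)| = 2^4$. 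If additionally $|P:\Z(P)| \le 2^4$ we land in (ii).

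Next, I would search for a central involution $r \in \Omega_1(\Z(P))$ whose quotient satisfies $|P/\gen{r}:\Z(P/\gen{r})| \le 2^3$. A counting argument over the $2^4 - 1$ involutions of $\gamma_2(P) \cap \Omega_1(\Z(P))$, weighted by how much each one enlarges the center upon quotienting, would either produce such an $r$ (placing $P$ in case (iii)) or else show that each central involution is ``rigid'' in the sense that the center cannot grow enough when it is factored out. Failure of this search gives strong homogeneity: the map $\gamma_2(P) \to \Hom(P/\Z(P),\mathbb{F}_2)$ sending $c$ to the class of $[\cdot,\cdot]$ normalized by $c$ has a particular rank profile that drives the subsequent analysis.

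The hard part is the explicit classification (iv). Here I would pick $G \le P$ to be a subgroup of minimal order with $\gamma_2(G) = \gamma_2(P)$; because $\gamma_2(H)/\gamma_3(H)$ is cyclic when $H$ is $2$-generated, $G$ must have between $2$ and $5$ generators depending on how $\gamma_2(P)$ decomposes under the commutator pairing. The remaining generators of $P$ must centralize $G$ (otherwise the breadth bound on $G$ would be violated), yielding $P = G\,\C_P(G)$ with $\C_P(G)$ abelian. Then, using $b(G) \le 3$ and $|\gamma_2(G)| = 2^4$, I would show the nilpotency class of $G$ is at most $3$ and $\mho_1(\gamma_3(G)) = 1$ (since $[y,{}_2x]^2 \equiv [y,{}_2x^2] \pmod{\gamma_4}$ forces a vanishing because $b(x^2) \le 2$). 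A case split on $d(G) \in \{2,3,4,5\}$ and on the rank of $\gamma_3(G)$ then identifies the ``free-est'' group $\hat{G}$ in each situation by reading off forced relations among basic commutators; quotienting by appropriate powers $x^{2i}, y^{2j}, \ldots$ yields the five families (1)--(5) in the statement.

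The main obstacle is the exhaustiveness verification in case (iv): proving that no sixth family escapes the enumeration. This amounts to showing that once the pair $(d(G),\text{class}(G))$ is fixed, the commutator relations among a chosen generating set are uniquely determined up to isomorphism by the breadth bound, which requires a careful tracking of all basic commutators of weights $2$ and $3$ and a verification that the Jacobi identity together with the $2^3$ centralizer bound leaves exactly one family of presentations for each parameter choice. I expect this step to dominate the length of the argument, and to require the detailed power-commutator collection identities that are particular to $p=2$.
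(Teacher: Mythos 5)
This statement is not proved in the paper at all: it is quoted as an external result of Wilkens \cite{bW07} (a classification occupying an entire research article), so there is no internal argument to compare yours against. Judged on its own terms, your proposal is not a proof but a programme, and the steps you defer are precisely the content of the theorem. Concretely: (a) the opening step, ``adapting the Parmeggiani--Stellmacher argument \ldots to $p=2$,'' is exactly the non-routine part --- the odd-$p$ proof breaks at $p=2$, which is why Wilkens's paper exists --- and the coarse bound $|\gamma_2(P)|\le 2^5$ you posit does not even match the shape of the conclusion (case (ii) permits $|\gamma_2(P)|$ up to $2^6$ a priori, while (iii) and (iv) pin it to exactly $2^4$); no argument is given for it. (b) The ``counting argument over the $2^4-1$ involutions, weighted by how much each one enlarges the center'' is not specified in any checkable form, and the claim that its failure yields a usable ``rank profile'' of a map to $\Hom(P/\Z(P),\mathbb{F}_2)$ is a heuristic, not a deduction; nothing in the proposal establishes that the alternative to (i)--(iii) is exhausted by (iv).

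Within case (iv) the individual assertions also do not hold as stated. That the remaining generators of $P$ ``must centralize $G$, otherwise the breadth bound on $G$ would be violated'' is a non sequitur: breadth is computed in $P$, and an element can act nontrivially on $G$ while keeping $|P:\C_P(x)|\le 2^3$; moreover $\C_P(G)$ being abelian requires its own argument. The displayed congruence $[y,{}_2x]^2\equiv[y,{}_2x^2]\pmod{\gamma_4}$ is not a correct collection identity --- bilinearity of commutation modulo $\gamma_4$ together with $[y,x^2]\equiv[y,x]^2\pmod{\gamma_3}$ gives $[y,{}_2x^2]\equiv[y,{}_2x]^4$, not $[y,{}_2x]^2$ --- and the assertion $b(x^2)\le 2$ is unsupported. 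Finally, you yourself identify the exhaustiveness of the five families (1)--(5) as the dominant obstacle; that verification is essentially the whole of \cite{bW07}, and your proposal offers no mechanism for it beyond ``a careful tracking of all basic commutators.'' In short, the proposal is an outline whose load-bearing steps are either unjustified or incorrect as written, and it cannot be accepted as a proof of the theorem.
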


The following result reduces our study to $2$-groups of nilpotency class $2$ and $3$, and to groups of order $2^7$ if the nilpotency class is $3$.
\begin{thm}\label{keyresult1}
Let $P$ be a finite $2$-group  such that   $\Z(P) \le \gamma_2(P)$, $\gamma_2(P)$ is elementary abelian of order $2^4$ and $b(P) = 3$.  Then 

{\rm (i)}  The nilpotency class of $P$ is either $2$ or $3$. 

{\rm (ii)} If the nilpotency class of $P$ is $2$, then $|P| \ge 2^8$.

{\rm (iii)} If the nilpotency class of $P$ is $3$, then there exists a $3$-generator subgroup $G$ of $P$ having the same nilpotency class as that of $P$ such that $\gamma_2(G) = \gamma_2(P)$. Moreover, $|G| = 2^7$,  and if $|P| \ge 2^8$, then $P$ is an amalgamated semidirect product of $G$ and a subgroup $K$ with $|\gamma_2(K)| = 2$. If $K$ is non-abelian, then it is isoclinic to  an extraspecial $2$-group.
\end{thm}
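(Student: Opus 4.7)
The plan is to follow the template of Theorem \ref{keyresult}, with Wilkens' Theorem \ref{bettina} playing the role of Theorem \ref{psthm} in the odd-prime case. Since $|\gamma_2(P)| = 2^4$, case (i) of Theorem \ref{bettina} is immediately excluded, so $P$ falls into case (ii), (iii), or (iv). I shall treat conclusions (i), (ii), (iii) of the theorem in turn.

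For part (i), I must rule out nilpotency class $4$ or $5$. In Wilkens' case (iv), $P = G \cdot \C_P(G)$ with $\C_P(G)$ abelian, so $\gamma_2(P) = \gamma_2(G)$ and $G$ itself must have $\gamma_2(G)$ elementary abelian of rank $4$. I shall go through subclasses (1)--(5) individually: subclasses (1) and (2) have $G$ $2$-generated, so by Witt's formula the abelian rank of $\gamma_2(G)$ is at most $3$ (for class $\le 3$) and coupled with the exponent relations in each presentation this rules these subclasses out; subclasses (3)--(5) are handled by direct inspection of the relations, which force class at most $3$ once one imposes the elementary abelian rank-$4$ condition on $\gamma_2$. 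In Wilkens' cases (ii) and (iii), the argument mirrors the last paragraph of the proof of Theorem \ref{keyresult}: if $P$ were of class $4$, the normal subgroup $R$ of order $2$ in $\gamma_2(P)$ would have to equal $\gamma_4(P)$ (else $P/R$ would itself have class $4$, impossible since $(P/R)/\Z(P/R)$ has order at most $2^3$), but then Lemma \ref{prelemma2} applied to $P$ yields a contradiction because $\Z(P) \cap \gamma_2(P)$ ends up maximal in $\gamma_2(P)$.

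For part (ii), if the class is $2$ then $\gamma_2(P) = [P,P]$ is generated by the commutators of a generating set, whence $d(\gamma_2(P)) \le \binom{d(P)}{2}$. Since $\gamma_2(P)$ is elementary abelian of rank $4$, we need $d(P) \ge 4$, and as $\gamma_2(P) \le \Phi(P)$ we obtain $|P| = |P:\Phi(P)|\cdot|\Phi(P)| \ge 2^{d(P)}\cdot|\gamma_2(P)| \ge 2^4 \cdot 2^4 = 2^8$.

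For part (iii), assume the class is $3$. The discussion in part (i) places $P$ in Wilkens' case (iii) with $R = \gamma_3(P)$ (of order $2$); one checks that $R \ne \gamma_3(P)$ would make $P/R$ again of class $3$, contradicting $|(P/R):\Z(P/R)| \le 2^3$. Then $P/R$ has class $2$ with $|(P/R):\Z(P/R)| = 2^3$ (equality forced by $\Z(P) \le \gamma_2(P)$), so $P$ admits a generating set $\{a,b,c,x_1,\ldots,x_k\}$ with $[x_i, P] \le R$ for each $i$; because $\Z(P) \le \gamma_2(P)$ we may sharpen to $[x_i, P] = R$. Setting $G = \gen{a,b,c}$ and $K = \gen{x_1,\ldots,x_k}$, a bookkeeping argument tracking basic commutators modulo $R$ (line by line analogous to the third paragraph of the proof of Theorem \ref{keyresult}) shows that $\gamma_2(G) = \gamma_2(P)$, whence $|G| = 2^7$, while $\gamma_2(K) \le R$. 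The conjugation action of $K$ on $G$ gives the amalgamated semidirect product, and if $K$ is non-abelian then $|\gamma_2(K)| = |R| = 2$, so by Corollary \ref{precor} $K$ is isoclinic to an extraspecial $2$-group. The main technical obstacle will be the case-by-case analysis of Wilkens' case (iv), in particular subclasses (3)--(5), where the presentations permit a richer lower central series than in the odd-prime setting; ruling out class $4$ there requires careful commutator calculus combined with the elementary abelian rank-$4$ hypothesis on $\gamma_2(P)$.
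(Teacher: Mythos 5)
There is a genuine gap, and it sits exactly at the point where the $p=2$ case differs from Theorem \ref{keyresult}: you never actually prove that $(P/R)/\Z(P/R)$ is elementary abelian of order $2^3$, which for odd $p$ was supplied by Lemma \ref{prelemma1} (stated only for $p\ge 3$). The two justifications you offer in its place do not work. First, your claim that $R\ne\gamma_3(P)$ is impossible because ``$P/R$ would have class $3$, contradicting $|(P/R):\Z(P/R)|\le 2^3$'' is false as a general implication: $D_{16}$ has class $3$ and central quotient of order $8$, so no contradiction arises from the index bound alone. Second, in part (i) your treatment of Wilkens' cases (ii)--(iii) is also unsound: in case (ii) there is no subgroup $R$ to speak of (there the paper argues directly that Lemma \ref{prelemma2} forces $|\gamma_2(P)/\Z(P)|\ge 4$, hence $|P|\le 2^6$, a contradiction), and in case (iii) the maximality of $\gamma_2(P/R)\cap\Z(P/R)$ in $\gamma_2(P/R)$ only yields a subgroup $M$ of index $2$ in $\gamma_2(P)$ with $[M,P]\le R$; it does not make $\Z(P)$ maximal in $\gamma_2(P)$, so ``Lemma \ref{prelemma2} applied to $P$'' gives nothing. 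The paper's actual argument at all of these points is an order count specific to the hypotheses: if $(P/R)/\Z(P/R)$ were non-abelian (hence $D_8$ or $Q_8$), then $P/R$ is $2$-generated modulo its centre, $[\bar a,\bar b]$ is a square modulo $\Z(P/R)$, so $\C_{P/R}([\bar a,\bar b])$ is maximal, $\gamma_3(P/R)=[[\bar a,\bar b],P/R]$ has order $2$, and therefore $|\gamma_2(P/R)|\le 4$, contradicting $|\gamma_2(P)/R|=2^3$; likewise exponent $4$ for $(P/R)/\Z(P/R)$ is excluded because $2$-generation modulo the centre in class $2$ would force $|\gamma_2(P/R)|=2$. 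Without this computation you have neither the exclusion of class $\ge 4$, nor $R=\gamma_3(P)$, nor the generating set $\{a,b,c,x_1,\ldots,x_k\}$ with $[x_i,P]\le R$; your ``line by line'' appeal to the third paragraph of the proof of Theorem \ref{keyresult} silently invokes Lemma \ref{prelemma1}, which is unavailable for $p=2$.

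There is also a coverage gap in part (iii). Your statement that the class-$3$ case ``places $P$ in Wilkens' case (iii)'' is not delivered by your part (i): Wilkens' case (ii) genuinely occurs in class $3$ (there $\Z(P)\le\gamma_2(P)$ and $|P:\Z(P)|\le 2^4$ force $|\gamma_2(P)/\Z(P)|=2$, $|P|=2^7$, and $G=P$ is itself the required $3$-generator subgroup -- easy, but it must be said), and for case (iv) you only claim ``class at most $3$'' for subclasses (3)--(5), so a hypothetical class-$3$ group covered only by case (iv) would escape your argument entirely; the paper instead checks that under the standing hypotheses case (iv) leaves only subclass (5), whose members have class $2$. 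Your part (ii), by contrast, is correct, and the rank count $d(\gamma_2(P))\le\binom{d(P)}{2}$ together with $|P|\ge 2^{d(P)}|\gamma_2(P)|$ is if anything cleaner than the paper's phrasing.
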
 
\begin{proof}
For the given group $P$, one of the assertions (ii) - (iv) of  Theorem \ref{bettina} holds true. We start by noting that  $|P| \ge 2^7$. Also, if  Theorem \ref{bettina}(iv) holds, then  $P$ is isomorphic to a group in class (5) of  Theorem  \ref{bettina}(iv), which consists of groups of nilpotency class $2$.  So we only need to take into consideration the assertions (ii) - (iii) of  Theorem \ref{bettina}. Let the nilpotency class of $P$ be at least $4$. Then, by Lemma \ref{prelemma2}, $\Z(P)$ can not be maximal in $\gamma_2(P)$.  So in the case when Theorem \ref{bettina}(ii) holds, we must get $|\gamma_2(P)/\Z(P)| \ge 4$. Since $P$ is non-abelian, it follows that $|P| \le 2^6$, which is not possible as observed above. Now assume that Theorem \ref{bettina}(iii) holds. Then there exists a central subgroup $R$ of order $2$ such that $|P/R : \Z(P/R)| \le 8$. Also the nilpotency class of $P/R$ is at least $3$. This is possible only when  $|\gamma_2(P/R) : \gamma_2(P/R) \cap \Z(P/R)| = 2$, which, by Lemma \ref{prelemma2}, implies that the nilpotency class of $P/R$ is $3$. Thus  $(P/R)/\Z(P/R)$ is non-abelian, which is not possible as  shown in the next para.  Hence the nilpotency class of $P$ is either $2$ or $3$. Let the nilpotency class of $P$ be $2$ and $ |P| = 2^7$.  Then $P$ is generated by at most three elements, which is not possible. Hence $|P| \ge 2^8$ in this case.

Now we assume that the nilpotency class of $P$ is $3$. If Theorem \ref{bettina}(ii) holds, then, by the given hypotheses, it follows that $|\gamma_2(P)/\Z(P)| = 2$. Hence $P$ itself is a $3$-generator group of order $2^7$. Next assume that Theorem \ref{bettina}(iii) holds. Thus there exists a central subgroup $R$ of order $2$ such that $|P/R : \Z(P/R)| \le 8$. In our case, it is easy to deduce that $|P/R : \Z(P/R)| = 8$. Set $\bar{P} = P/R$.  We claim that $\bar{P}/\Z(\bar{P})$ is abelian. Contrarily assume that $\bar{P}/\Z(\bar{P})$ is non-abelian. Then the nilpotency class of $\bar{P}$ is $3$ and $\bar{P} = \gen{\bar a, \bar b, \Z(\bar{P})}$ for some $a, b \in P$, where $\bar x$ denotes $xR$ for $x \in P$.  Since the exponent of $\bar{P}/\Z(\bar{P})$ can not be $2$, it follows that $[\bar a, \bar b] = (\bar a^{\epsilon_1} \bar b^{\epsilon_2})^2$ modulo $\Z(\bar{P})$, where $\epsilon_i \in \mathbb{F}_2$. Hence $\C_{\bar{P}}([\bar a, \bar b])$ is maximal in $\bar{P}$.  This implies that $[[\bar a, \bar b], \bar{P}]$ is of order $2$, which contradicts the fact that $|\gamma_2(\bar{P})| = 8$. The claim is now settled. Thus the nilpotency class of $\bar{P}$ is $2$. If the exponent of $\bar{P}/\Z(\bar{P})$ is $4$, then $\bar{P} = \gen{\bar a, \bar b, \Z(\bar{P})}$ for some $a, b \in P$. This implies that $|\gamma_2(\bar{P})| = 2$, which again contradicts the fact that $|\gamma_2(\bar{P})| = 8$. Hence the exponent of $\bar{P}/\Z(\bar{P})$ is $2$, and therefore  $\bar{P} = \gen{\bar a, \bar b, \bar c , \Z(\bar{P})}$ for some $a, b, c \in P$. 

Let $G:=\gen{a, b, c}$. As proved in the reduction theorem for $p$ odd case,  $\gamma_2(G) = \gamma_2(P)$, and therefore  $|G| = 2^7$. Let $|P| \ge 2^8$. Then $P = \gen{a, b, c, x_1, \ldots, x_k}$ for some integer $k \ge 1$. Let $K := \gen{x_1, \ldots, x_k}$. Then $[x_i, K] = R$. It now follows that  $G$ and $K$ are the desired subgroups, which completes the proof.   \hfill $\Box$

\end{proof}

We deduce the following result using GAP \cite{GAP}. A theoretical proof goes on the lines of Lemma \ref{p7lem1}.

\begin{lemma}\label{cl3lemma}
Let $G$ be a group of order $2^7$  with $b(G)=3$, $\Z(G) \le \gamma_2(G)$  and $\gamma_2(G)$ elementary abelian of order $2^4$. Then $\K(G)=\gamma_2(G)$.
\end{lemma}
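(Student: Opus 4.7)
The plan is to follow the blueprint of the proof of Lemma \ref{p7lem1}, adapted to $p=2$. By Theorem \ref{keyresult1}, since $|G|=2^7$, $b(G)=3$ and $|\gamma_2(G)|=2^4$, the nilpotency class of $G$ must be $3$ (class $2$ being ruled out by Theorem \ref{keyresult1}(ii), which requires $|P|\ge 2^8$), and $G$ is minimally generated by three elements. Write $G=\langle a,b,c\rangle$ and note $\gamma_3(G)\le\Z(G)\le\gamma_2(G)$, so that $\gamma_2(G)/\gamma_3(G)$ is generated by the images of $[a,b]$, $[a,c]$, $[b,c]$. The argument then splits on $|\Z(G)|\in\{2,4,8\}$, with $|\Z(G)|=4$ subdividing further on $|\gamma_3(G)|\in\{2,4\}$.

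For $|\Z(G)|=2$ (so $\gamma_3(G)=\Z(G)$), I would replicate Steps~1-2 of Lemma \ref{p7lem1}: the Hall-Witt identity rules out $|\C_G(\gamma_2(G))|\in\{2^6,2^5\}$, forcing $\C_G(\gamma_2(G))=\gamma_2(G)$; the quotient $G/\gamma_3(G)$ is then the freest $3$-generated class-$2$ group of order $2^6$, and the explicit identities of Step~2 write each element of $\gamma_2(\bar G)$ as a single commutator of the form $[\bar a^{\epsilon}\bar c^{i},\bar G]$, with $\gamma_3(G)\subseteq\bigcap_{\epsilon,i}[a^{\epsilon}c^{i},G]$, so Lemma \ref{prelemma3} yields $\K(G)=\gamma_2(G)$. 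For $|\Z(G)|=4$ and $|\gamma_3(G)|=2$, Steps~3-4 carry over: Hall-Witt forces $|\C_G(\gamma_2(G))|=2^5$, and after placing $c\in\C_G(\gamma_2(G))$ and arranging one of the three subcases $[b,c]\in\Z(G)$, $[a,c]\in\Z(G)$, or (after a generator change) $[a',b']\in\Z(G)$, the commutator identities in those subcases go through. The sub-case $|\Z(G)|=|\gamma_3(G)|=4$ is eliminated as in the final paragraph of Lemma \ref{p7lem1}: if $b(G)=3$ here, Theorem \ref{bettina}(iii) produces a central subgroup $R$ of order $2$ with $|G/R:\Z(G/R)|\le 8$; since $G/R$ still has class $3$, this forces $|\gamma_2(G/R)/\gamma_3(G/R)|=4$, impossible as $G/R$ has only two non-central generators.

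The main obstacle is the case $|\Z(G)|=8$, which for odd primes is precisely the counterexample case of Lemma \ref{p7lem2}. The crucial observation is that the non-commutator in that lemma was produced by choosing $\lambda,\nu\in\mathbb{F}_p^{*}$ with $-\lambda\nu^{-1}$ a non-quadratic residue modulo $p$; over $\mathbb{F}_2$ the only non-zero element is $1$ and every element is a square, so the obstruction vanishes. I would first verify (via Theorem \ref{bettina} or a direct centralizer count) whether $|\Z(G)|=8$ is even compatible with $b(G)=3$ in our setting; if not, the case is vacuous, and otherwise I would re-run the analysis of Lemma \ref{p7lem2} over $\mathbb{F}_2$, in which the system \eqref{numeric6}-\eqref{numeric9} reduces, after the substitutions used in that proof, to the always-solvable condition $(\beta_1^{-1}\beta_2)^2=1$, and exhibit each $[b,c]^{\lambda}[a,[a,b]]^{\nu}$ as a commutator. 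The one subtlety is that at $p=2$ the Hall identity takes the form $[x,y^{2}]=[x,y]^{2}[x,y,y]$ (since $\binom{2}{2}=1\ne0$), so several intermediate identities carry extra $\gamma_3$-terms compared with the odd-prime case; a careful bookkeeping of these terms, corroborated by the authors' GAP computation, is what completes the proof.
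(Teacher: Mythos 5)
Note first that the paper does not actually write out a theoretical proof of this lemma: it is verified by a GAP computation, with only the remark that a theoretical argument would follow the lines of Lemma \ref{p7lem1}. Your plan for the cases $|\Z(G)|=2$ and $|\Z(G)|=4$ is in that intended spirit and looks workable: the identities in Steps 1--4 of Lemma \ref{p7lem1} are residue-free and involve no division by $2$, and the elimination of $|\Z(G)|=|\gamma_3(G)|=4$ goes through via Theorem \ref{bettina} (you should also dispose of alternative (iv) there, though that follows from the inspection already made in Theorem \ref{keyresult1}).

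The genuine gap is your treatment of the crux case $|\Z(G)|=2^3$, and it stems from a misreading of Lemma \ref{p7lem2}. You claim the non-commutators there are produced by choosing $-\lambda\nu^{-1}$ a non-residue, so that ``the obstruction vanishes'' over $\mathbb{F}_2$. But only Case (2) of the $|\gamma_3(G)|=p^2$ analysis uses a non-residue; in the subcase $|\gamma_3(G)|=p$ the element $[b,c][a,[a,b]]$ is shown not to be a commutator by an unconditional contradiction from \eqref{eqnn1}--\eqref{eqnn4}, and likewise Case (1) of the $|\gamma_3(G)|=p^2$ analysis excludes every $[b,c]^{\lambda}[a,[a,b]]^{\nu}$ with $\lambda,\nu\neq 0$ without any residue hypothesis. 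If those arguments survived the passage to $p=2$ they would contradict the very statement you are proving; so the burden on you is to show either that no group of order $2^7$ with these invariants and $|\Z(G)|=2^3$ exists (not obvious: here $|G:\Z(G)|=2^4$, which is precisely compatible with breadth $3$ via Theorem \ref{bettina}(ii)), or that the mod-$2$ collection terms genuinely alter the linear systems --- note that equations \eqref{numeric8}--\eqref{numeric11} contain terms divided by $2$, so they cannot simply be ``re-run over $\mathbb{F}_2$'' and reduced to $(\beta_1^{-1}\beta_2)^2=1$. You do neither: the proposal acknowledges the extra $\gamma_3$-terms only as a bookkeeping subtlety and ultimately defers to the authors' GAP computation, which is exactly the step a theoretical proof would have to replace. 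As it stands, the decisive case of the lemma is not proved.
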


We again use GAP to establish the following two lemmas, whose theoretical proofs can be written on the lines of corresponding results of Section \ref{sec3}.

\begin{lemma} \label{28cl2lemma1} 
Let $G$ be a finite $p$-group of order $2^8$ and nilpotency class $2$  such that $\gamma_2(G)$ is elementary abelian of order $2^4$. If $G$ is not of conjugate type $\{1, 2^3\}$ or $\{1, 2^2, 2^3\}$, then $\K(G) \neq \gamma_2(G)$. Moreover, every element of $\gamma_2(G)$ can be written as a product of at most two elements from $\K(G)$.
\end{lemma}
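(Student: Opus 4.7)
The plan is to mirror the proof of Lemma~\ref{cl2lemma1} almost verbatim. The argument in the odd-$p$ case reduces the problem to showing the unsolvability of a small $\mathbb{F}_p$-linear system, and that same system remains unsolvable over $\mathbb{F}_2$, so only cosmetic changes are needed.

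First I would deduce $\Z(G)=\gamma_2(G)$ and that $G$ is minimally $4$-generated. Since $\gamma_2(G)\le\Z(G)$ holds in class $2$, if $|\Z(G)|\ge 2^5$ then $|G/\Z(G)|\le 2^3$; but then $\gamma_2(G)$ would be generated by the at most $\binom{3}{2}=3$ commutators of lifts of generators of $G/\Z(G)$, forcing $|\gamma_2(G)|\le 2^3$ and contradicting $|\gamma_2(G)|=2^4$. The hypothesis on conjugate type supplies a non-central $d$ with $[G:\C_G(d)]=2$, so $[d,G]$ has order $2$. Picking $c\in G$ with $[c,d]\ne 1$ (so $\langle [c,d]\rangle=[d,G]$) and extending $\{c,d\}$ to a generating set $\{a,b,c,d\}$ of $G$ (possible since $\Phi(G)\le \C_G(d)$ forces $\bar c\ne\bar d$ in $G/\Phi(G)$), I would then replace $a$ by $ac^{r_1}$ and $b$ by $bc^{r_2}$ for suitable $r_i\in\mathbb{F}_2$ to arrange $[a,d]=[b,d]=1$; such $r_i$ exist since $[a,d],[b,d]\in\langle [c,d]\rangle$. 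This yields $\gamma_2(G)=\langle [a,b],[a,c],[b,c],[c,d]\rangle$.

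The heart of the argument is to show $[a,b][c,d]\notin\K(G)$. Suppose $[a,b][c,d]=[u,v]$ with $u=a^{\alpha_1}b^{\alpha_2}c^{\alpha_3}d^{\alpha_4}$ and $v=a^{\beta_1}b^{\beta_2}c^{\beta_3}d^{\beta_4}$; the exponents may be taken in $\mathbb{F}_2$ since $\gamma_2(G)$ has exponent $2$. Because $G$ has class $2$, the commutator expansion is strictly bilinear---no $\binom{n}{2}$ or squaring correction terms appear, these only arising at nilpotency class $\ge 3$---so using $[a,d]=[b,d]=1$ and equating coefficients of the four basis commutators produces the $\mathbb{F}_2$-system
\[\alpha_1\beta_2+\alpha_2\beta_1=1,\quad \alpha_1\beta_3+\alpha_3\beta_1=0,\quad \alpha_2\beta_3+\alpha_3\beta_2=0,\quad \alpha_3\beta_4+\alpha_4\beta_3=1.\]
A short case split on $\beta_3$ finishes: if $\beta_3=1$, the middle two equations give $\alpha_1=\alpha_3\beta_1$ and $\alpha_2=\alpha_3\beta_2$, and substitution into the first yields $0=1$; if $\beta_3=0$, the fourth equation forces $\alpha_3=\beta_4=1$, and then the middle two force $\beta_1=\beta_2=0$, again contradicting the first.

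The ``moreover'' clause is immediate: for any subgroup $H$ of $\gamma_2(G)$ of order $2$, $\gamma_2(G/H)$ is elementary abelian of order $2^3$, so $\K(G/H)=\gamma_2(G/H)$ by Theorem~\ref{dmr77}, and Lemma~\ref{prelemma5} yields the two-commutator decomposition. I do not foresee a real obstacle; characteristic $2$ could in principle have disrupted the commutator expansion, but class $2$ keeps that expansion strictly bilinear, so the transition from odd $p$ to $p=2$ is essentially formal.
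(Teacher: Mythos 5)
Your proof is correct, and it is worth noting how it relates to what the paper actually does: for this lemma the paper does not write out an argument at all, but verifies the statement by a GAP computation, remarking only that ``theoretical proofs can be written on the lines of corresponding results of Section \ref{sec3}.'' What you have produced is exactly that promised adaptation of Lemma \ref{cl2lemma1}: the same normalization of a generating set $\{a,b,c,d\}$ with $[a,d]=[b,d]=1$ and $\gamma_2(G)=\langle [a,b],[a,c],[b,c],[c,d]\rangle$, the same target element $[a,b][c,d]$, and the same four-equation system, now over $\mathbb{F}_2$, where your case split on $\beta_3$ does yield a contradiction; your observation that class $2$ keeps the expansion strictly bilinear (so no characteristic-$2$ correction terms arise) is the right reason the transfer from odd $p$ is harmless, and the ``moreover'' clause via Theorem \ref{dmr77} and Lemma \ref{prelemma5} matches the paper's pattern. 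One small point you should make explicit: the existence of a non-central $d$ with $|G:\C_G(d)|=2$ does not follow from the conjugate-type hypothesis alone --- a priori the type could be $\{1,4\}$, which is also not excluded by the hypothesis and contains no class of size $2$. You need $b(G)\ge 3$ (Remark \ref{remark}, i.e.\ \cite[Theorem A]{PS99}), together with the observation that $\C_G(x)\supseteq\langle \Z(G),x\rangle$ forces $b(G)\le 3$ here, to see that the only admissible types are $\{1,2,8\}$ and $\{1,2,4,8\}$, both of which contain $2$; the paper uses this breadth bound throughout without further reference, so this is a one-line repair rather than a genuine gap.
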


\begin{lemma}\label{28cl2lemma2} 
Let $G$ be a finite $2$-group of order $2^8$,  nilpotency class $2$  and conjugate type $\{1,  2^3\}$ such that $\gamma_2(G)$ is elementary abelian of order $2^4$. Then $\K(G) = \gamma_2(G)$.
\end{lemma}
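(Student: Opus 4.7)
My approach is to adapt the strategy of the odd-prime analogue Lemma \ref{cl2lemma1b}, with a direct GAP verification as a fallback (mirroring the author's remark preceding Lemma \ref{cl3lemma}). Set up the structural data first. Since $G$ has nilpotency class $2$ and $\Z(G) \le \gamma_2(G)$, we have $\Z(G) = \gamma_2(G)$, hence $|G/\Z(G)| = 2^4$. Because $\gamma_2(G)$ has exponent $2$ and lies in $\Z(G)$, every $g\in G$ satisfies $[g^2,h]=[g,h]^2=1$, so $g^2\in\Z(G)$ and $V := G/\gamma_2(G)$ is elementary abelian of rank $4$ over $\mathbb{F}_2$. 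Writing $W := \gamma_2(G)$, the commutator descends to an alternating $\mathbb{F}_2$-bilinear form $f\colon V\times V\to W$, and the conjugate-type hypothesis $\{1,2^3\}$ becomes the condition $\dim_{\mathbb{F}_2} f(v,V)=3$ for every $0\ne v\in V$. Moreover, the squaring map $q\colon V\to W$, $q(\bar g)=g^2$, is a quadratic form whose polarization is $f$. The task reduces to proving $f(V\times V)=W$.

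The quickest route is a GAP computation: enumerate all groups $G$ of order $2^8$ in the Small Groups Library with nilpotency class $2$, $\gamma_2(G)$ elementary abelian of order $16$, and conjugate type $\{1,8\}$, and verify $\K(G)=\gamma_2(G)$ directly in each case. Since these hypotheses pick out a finite list of isomorphism classes, this settles the lemma.

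For a theoretical proof on the lines of Lemma \ref{cl2lemma1b}, I would classify the pair $(q,f)$ up to $\operatorname{GL}_{\mathbb{F}_2}(V)\times\operatorname{GL}_{\mathbb{F}_2}(W)$-equivalence (the $p=2$ analogue of the classification invoked via \cite{NY18} in the odd case) to obtain an explicit presentation of $G$ on four generators $a,b,c,d$. Then, for each normal form and each target $w\in W$, write
\[
w=[a^{\alpha_1}b^{\alpha_2}c^{\alpha_3}d^{\alpha_4},\ a^{\beta_1}b^{\beta_2}c^{\beta_3}d^{\beta_4}],
\]
expand using the class-$2$ commutator identities, and prove that the resulting bilinear system in $\alpha_i,\beta_j\in\mathbb{F}_2$ is always solvable.

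The hard part is precisely this last solvability argument. The odd-prime proof is terminated by producing a non-square $-\lambda\nu^{-1}\in\mathbb{F}_p^*$ to obstruct the contrary, a step with no direct analogue over $\mathbb{F}_2$. Instead, one must exploit the quadratic form $q$ together with a dual-counting argument: since each $0\ne v\in V$ lies in the radical of a unique non-zero $\chi\in W^*$, the identity $3b=15$ shows exactly five non-zero $\chi_1,\dots,\chi_5$ have $\operatorname{rad}(\chi_i\circ f)$ of dimension $2$; these five radicals partition $V\setminus\{0\}$ into a spread, and $f(V\times V)=\bigcup_{i=1}^{5}\ker\chi_i$. Pure bilinear data does not prevent five hyperplanes of $\mathbb{F}_2^4$ from missing some non-zero vector, so the delicate point is to use the compatibility of $q$ with the spread to force the five hyperplanes $\ker\chi_i\subseteq W$ to cover $W\setminus\{0\}$. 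Because this compatibility step is intricate, the GAP verification is the pragmatic choice, as the author indicates.
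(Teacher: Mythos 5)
Your primary route is exactly the paper's: the authors establish this lemma purely by GAP verification, remarking only that a theoretical proof ``can be written on the lines of'' the corresponding results of Section 3, so your proposal takes essentially the same approach. Your structural sketch (the spread of five radicals and the hyperplane-covering question over $\mathbb{F}_2$) goes beyond what the paper records and correctly pinpoints why the non-square trick ending the proof of Lemma \ref{cl2lemma1b} has no direct analogue at $p=2$, but since you, like the paper, fall back on the computational check, nothing further is needed.
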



The proof of the following lemma goes on the lines of the proof of Lemma \ref{cl2lemma1a}. Since the proof for $p=2$ is a bit different towards the end,  we have decided to include a part of the proof here too. 

\begin{lemma}  \label{28cl2lemma3}
Let $G$ be a finite $2$-group of order $2^8$,  nilpotency class $2$  and conjugate type $\{1, 2^2, 2^3\}$ such that $\gamma_2(G)$ is elementary abelian of order $2^4$. Then $\K(G) = \gamma_2(G)$ if and only if $G$ admits a generating set $\{a, b, c, d\}$ such that $[a, b] = 1 = [c, d]$. Moreover, if $\K(G) \ne \gamma_2(G)$, then every element of $\gamma_2(G)$ can be written as a product of at most two elements from $\K(G)$.
\end{lemma}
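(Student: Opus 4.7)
The plan is to follow the skeleton of the proof of Lemma \ref{cl2lemma1a} verbatim up to the last step, departing from it only where the odd-prime argument using non-quadratic residues collapses in characteristic two.

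For the forward implication, given a generating set $\{a,b,c,d\}$ with $[a,b]=1=[c,d]$, I would expand $[a^{\alpha_1}b^{\alpha_2}c,\, a^{\beta_1}b^{\beta_2}d^{\beta_4}]$ in the elementary abelian $\gamma_2(G)=\langle[a,c],[b,c],[a,d],[b,d]\rangle$ exactly as in Lemma \ref{cl2lemma1a}. The resulting $\mathbb{F}_2$-linear system in $\alpha_i,\beta_j$ has a solution for every prescribed exponent vector, giving $\K(G)=\gamma_2(G)$.

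For the contrapositive of the converse, assume that $G$ admits no generating set with $[x_1,x_2]=1=[x_3,x_4]$. Using the conjugate type hypothesis, I would start from a generating set $\{a,b,c,d\}$ with $[a,c]=1$ and every other basic commutator nontrivial. Writing $[c,d]=[a,b]^{t_1}[b,c]^{t_2}[b,d]^{t_3}[a,d]^{t_4}$, the sequence of generator substitutions used in Lemma \ref{cl2lemma1a} works verbatim in characteristic two, forcing $t_3=0$ and $t_4t_2-t_1\ne 0$, and producing a modified generating set $\{a',b,c',d\}$ with $[a',c']=1$ and $[a',b]=[c',d]$ (over $\mathbb{F}_2$ the sign distinction in $[a',b]^{-1}=[c',d]$ is immaterial).

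The critical departure is the last step. Over $\mathbb{F}_p$ with $p$ odd one exhibits $[b,c']^\lambda[a',d]^\mu$ as a non-commutator by choosing $\lambda,\mu$ so that $-\lambda\mu^{-1}$ is a non-square modulo $p$; but in $\mathbb{F}_2^*$ every element is a square, and indeed a direct check (for example $\alpha_1=\alpha_3=\beta_2=\beta_4=1$, $\alpha_2=\alpha_4=\beta_1=0$) shows that $[b,c'][a',d]$ is a commutator when $p=2$. I therefore propose to replace it by the element $[a',b][b,c'][a',d]$. Expanding $[a'^{\alpha_1}b^{\alpha_2}c'^{\alpha_3}d^{\alpha_4},\, a'^{\beta_1}b^{\beta_2}c'^{\beta_3}d^{\beta_4}]$ using $[a',b]=[c',d]$ yields the $\mathbb{F}_2$-system
\begin{align*}
\alpha_1\beta_2+\alpha_2\beta_1+\alpha_3\beta_4+\alpha_4\beta_3 &= 1,\\
\alpha_2\beta_3+\alpha_3\beta_2 &= 1,\\
\alpha_2\beta_4+\alpha_4\beta_2 &= 0,\\
\alpha_1\beta_4+\alpha_4\beta_1 &= 1,
\end{align*}
whose inconsistency I would verify by splitting on the four values of $(\alpha_2,\alpha_4)\in\mathbb{F}_2^2$: when $\alpha_2=\alpha_4=0$ the second and fourth equations force $\alpha_3=\beta_2=\alpha_1=\beta_4=1$, making the first equal to $0$; when exactly one of $\alpha_2,\alpha_4$ is $1$, the third equation kills $\beta_4$ or $\beta_2$ and collapses the second or fourth; and when $\alpha_2=\alpha_4=1$ the third forces $\beta_2=\beta_4$, after which the second reduces the first to $0=1$.

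For the moreover part, I would pick any subgroup $H\le\gamma_2(G)$ of order $2$: then $\gamma_2(G/H)$ is elementary abelian of order $2^3$, so Theorem \ref{dmr77} yields $\K(G/H)=\gamma_2(G/H)$, and Lemma \ref{prelemma5} then expresses every element of $\gamma_2(G)$ as a product of at most two commutators. The main obstacle is the third step: identifying the correct non-commutator once the non-square trick is unavailable. Once $[a',b][b,c'][a',d]$ is proposed, the verification is routine; locating this element, rather than the odd-prime $[b,c']^\lambda[a',d]^\mu$, is what makes the characteristic-two case genuinely distinct.
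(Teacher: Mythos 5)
Your proposal matches the paper's proof essentially step for step: the same reduction via the substitutions forcing $t_3=0$ and $t_4t_2-t_1\ne 0$, the same witness $[a',b][b,c'][a',d]$ shown to be a non-commutator by checking the identical $\mathbb{F}_2$-system (the paper splits cases on $\alpha_2,\beta_2$ rather than $(\alpha_2,\alpha_4)$, which is immaterial), and the same appeal to Theorem \ref{dmr77} and Lemma \ref{prelemma5} for the final assertion. The proposal is correct, and your observation that the odd-prime non-square witness becomes a commutator at $p=2$ is exactly why the paper treats this case separately.
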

\begin{proof}
The `if'  part is the same as that of Lemma \ref{cl2lemma1a}. For  `only if' part, as is also done in Lemma \ref{cl2lemma1a}, we  provide a contrapositive proof. We assume that $G$ admits no generating set $\{x_1, x_2, x_3, x_4\}$ such that  $[x_1, x_2] = 1 = [x_3, x_4]$.  By the given hypothesis, we can always choose a generating set $\{a, b, c, d\}$ for $G$ such that  $[a, c] = 1$ and none of the other basic commutators of weight two in generators is trivial. So we can assume that
\begin{equation*}
[c, d]= [a, b]^{t_1} [b, c]^{t_2} [b, d]^{t_3} [a,d]^{t_4},
\end{equation*}
where $t_i \in \mathbb{F}_2$ and $1 \le i \le 4$, and therefore $\gamma_2(G)= \langle [a, b], [b, c], [b, d], [a,d] \rangle$. 
		
We can write the preceding equation as
\begin{equation}\label{2cls2eqn1}
[b, a^{-t_1} c^{t_2} d^{t_3}][d, a^{-t_4}c]= 1.
\end{equation}
We claim that $t_3 = 0$.  If $t_3 = 1$, then replacing $d$ by $d' := a^{-t_1} c^{t_2} d$ and a proper substitution reduces \eqref{2cls2eqn1}  to $[d', b^{-1} a^{-t_4 } c]=1$. Now replacing $b$ by  $b' := b^{-1} a^{-t_4 } c$, we get a  generating set $\{ a, b',c,d' \}$ for $G$ such that $\gamma_2(G)= \langle [a, b'], [b', c], [b', d'], [c,d'] \rangle$ and $[a, c]=1=[b', d']$, which contradicts our hypothesis. Our claim is now settled.
		
So now onwards we assume that  $t_3=0$.  Hence  \eqref{2cls2eqn1} reduces to $[b, a^{-t_1} c^{t_2} ][d, c a^{-t_4}]= 1$. Now replace $c$ by $c' := c a^{-t_4} $. A simple computation gives
$$[b, a^{-t_1} c^{t_2} ] = [b, a^{-t_1} (c'a^{t_4})^{t_2} ]= [b, a^{t_4 t_2 - t_1} c'^{t_2} ].$$ 
Thus \eqref{2cls2eqn1} reduces to 
\begin{equation}\label{2cls2eqn1a}
[b, a^{t_4t_2 - t_1} c'^{t_2}][d, c'] = 1.
\end{equation}
		
We claim that $t_4 t_2 - t_1 \ne 0$.  Contrarily assume that $t_4 t_2 - t_1 = 0$. Then \eqref{2cls2eqn1a} takes the form $[b,  c'^{t_2}][d, c'] = 1,$ which gives $[b^{t_2}d, c']  = 1$. Now replacing $d $ by $d' := b^{t_2}d$, we get a generating set $\{a, b, c', d' \}$ of $G$ such that $[a, c'] = 1 = [d', c']$,  which gives that the size of the conjugacy class of $c'$ in $G$ is $2$,  a contradiction to the given hypothesis. This settles our claim.
		
So we now assume  $t_4 t_2- t_1 \neq 0$. Then replacing $a$ by $a' := a^{t_4 t_2 - t_1} c'^{t_2}$, we get a new generating set $\{ a', b, c', d \}$ such that $\gamma_2(G)= \langle [a', b], [b, c'], [b, d], [a',d] \rangle$,  $[a', c']=1$ and, by \eqref{2cls2eqn1a}, $[a', b] = [c', d]$.  Now we claim that $[a', b] [b, c'] [a', d] \notin  \K(G)$. Contrarily assume that $[a', b][b, c'] [a', d] \in \K(G)$. Thus
$$[a', b] [b, c'] [a', d] = [a'^{\alpha_1} b^{\alpha_2} c'^{\alpha_3} d^{\alpha_4 } , a'^{\beta_1} b^{\beta_2} c'^{\beta_3} d^{\beta_4}],$$
for some  $\alpha_i,  \beta_j \in \mathbb{F}_2$, $1 \le i, j \le 4$. Expanding the right hand side and  comparing powers both side, we get
\begin{eqnarray}
\alpha_1 \beta_4 + \alpha_4 \beta_1 &=& 1, \label{2eqn13}\\
\alpha_2 \beta_4  + \alpha_4 \beta_2 &=& 0, \label{2eqn14}\\
\alpha_2 \beta_3  + \alpha_3 \beta_2   &=& 1, \label{2eqn15}\\
\alpha_1 \beta_2  + \alpha_2 \beta_1 + \alpha_3 \beta_4  + \alpha_4
\beta_3 & = & 1. \label{2eqn16} 
\end{eqnarray}
		
First assume that $\alpha_2=0$. Then from \eqref{2eqn15} we get $\alpha_3= \beta_2 =1$, and hence from \eqref{2eqn14} we get $\alpha_4 = 0$, which using\eqref{2eqn13} gives $\alpha_1 = \beta_4 = 1$. But these values contradict \eqref{2eqn16}. So assume that $\alpha_2 = 1$. If $\beta_2 = 0$, then \eqref{2eqn14} gives $\beta_4 = 0$ and \eqref{2eqn15} gives $\beta_3 = 1$. Substituting $\beta_4 = 0$ in \eqref{2eqn13} we get $\alpha_4= \beta_1= 1$, which contradict \eqref{2eqn16}.  Hence $\beta_2 \neq 0$. 
		
Finally assume that $\alpha_2= \beta_2= 1$. Therefore above equations reduces to
\begin{eqnarray}
\alpha_1 \beta_4 + \alpha_4 \beta_1 &=& 1, \label{2eqn17}\\
 \beta_4  + \alpha_4  &=& 0, \label{2eqn18}\\
 \beta_3  + \alpha_3    &=& 1, \label{2eqn19}\\
\alpha_1   +  \beta_1 + \alpha_3 \beta_4  + \alpha_4
\beta_3 & = & 1. \label{2eqn20} 
\end{eqnarray}
		 
As, by \eqref{2eqn18}, $\alpha_4= \beta_4$, from \eqref{2eqn17} we get $\alpha_4 = 1 = \alpha_1  + \beta_1 $. Now putting $\alpha_4= \beta_4 = 1$ in \eqref{2eqn20} and  using \eqref{2eqn19}, we get $\alpha_1  + \beta_1 = 0$, which is not possible. Hence $[a', b] [b, c'] [a', d] \notin \K(G)$. The final assertion follows the similar way as in Lemma \ref{cl2lemma1a}. This completes the proof.	 \hfill $\Box$

\end{proof}

The proof of the following lemma goes on the lines of the proof of Lemma \ref{cl2lemma2}.
\begin{lemma} \label{29cl2lemma2} 
Let $G$ be a  finite $2$-group of nilpotency class $2$ and order at least  $2^9$   such that $\Z(G) = \gamma_2(G)$, Theorem \ref{bettina}(iii) holds and $\gamma_2(G)$ is elementary abelian of order $2^4$. Then $\K(G)=\gamma_2(G)$. 
\end{lemma}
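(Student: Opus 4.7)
My plan is to mimic the structure of the proof of Lemma \ref{cl2lemma2}, replacing the role of Theorem \ref{psthm} by Theorem \ref{bettina}(iii) and adapting the commutator identities to characteristic $2$. Since $G$ has class $2$ with $\gamma_2(G)$ of exponent $2$, the quotient $G/\Z(G)$ is elementary abelian. Theorem \ref{bettina}(iii) produces a central $R$ of order $2$ with $|G/R : \Z(G/R)| \le 2^3$, while $\Z(G) = \gamma_2(G)$ has order $2^4$ and $|G| \ge 2^9$ force $|G/\Z(G)| \ge 2^5$. A basis argument then yields a generating set $\{a, b, c, x_1, \ldots, x_k\}$ for $G$ with $k \ge 2$, where the images of $a, b, c$ generate $(G/R)/\Z(G/R)$, and each $x_i \notin \Z(G)$ satisfies $[x_i, G] = R$.

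Set $S := \gen{a, b, c}$. As a $3$-generator class-$2$ group, $\gamma_2(S)$ is elementary abelian of order at most $2^3$, so Theorem \ref{dmr77} gives $\K(S) = \gamma_2(S)$. If every $x_i$ centralizes $S$, set $T := \gen{x_1, \ldots, x_k}$; as no $x_i$ lies in $\Z(G)$, $T$ must be non-abelian with $\gamma_2(T) = R$, hence isoclinic to an extraspecial $2$-group by Corollary \ref{precor}. Then $G$ is the central product of $S$ and $T$ over $S \cap T$, and Lemma \ref{prelemma4} delivers $\K(G) = \gamma_2(G)$.

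Otherwise some $x_j$ does not centralize $S$; relabelling, take $d := x_1$ with $[d, S] = R$. Because $\C_G(d)$ is maximal in $G$, a modification of the generators lets one assume $[a, d] = [b, d] = 1$ and $R = \gen{[c, d]}$, after which each remaining $x_i$ can be adjusted by a suitable power of $d$ so that $[c, x_i] = 1$ for $i \ge 2$. Writing a typical element of $\gamma_2(G)$ as $w = [a,b]^{\alpha_1}[b,c]^{\alpha_2}[a,c]^{\alpha_3}[c,d]^{\alpha_4}$, direct substitution (using $[a, d] = [b, d] = 1$ and the exponent-$2$ nature of $\gamma_2(G)$) shows that
\[
  [a,b]^{\alpha_1}[b,c]^{\alpha_2}[a,c][c,d]^{\alpha_4} = [c b^{\alpha_1},\, a b^{\alpha_2} d^{\alpha_4}]
\]
and
\[
  [a,b]^{\alpha_1}[b,c][c,d]^{\alpha_4} = [b d^{\alpha_4},\, a^{\alpha_1} c]
\]
cover the cases $\alpha_3 = 1$ and $\alpha_3 = 0,\ \alpha_2 = 1$, respectively.

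The hard case, and the main obstacle, is $w = [a, b][c, d]$, which I expect to handle by splitting on how $a, b, c, d$ interact with $A := \gen{x_2, \ldots, x_k}$. If $[a, x_i] = [c, d]$ for some $i$, then $w = [a, b x_i]$; symmetrically if $a \in \C_G(A)$ but $[b, x_i] = [c, d]$, then $w = [b, a x_i]$. If $a, b \in \C_G(A)$ but $[d, x_i] = [c, d]$, then $w = [a d, b x_i]$. Finally, if $a, b, c, d$ all centralize $A$, then $[A, G] = [A, A] = R$ forces $A$ non-abelian, hence $k \ge 3$, so there exist $i, j \ge 2$ with $[x_i, x_j] = [c, d]$ and $w = [b x_i, a x_j]$. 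The delicate point is verifying, after each successive modification, that the key hypotheses ($[x_i, G] = R$ and $x_i \notin \Z(G)$, as well as the normalisations $[a, d] = [b, d] = 0$ and $[c, x_i] = 1$ for $i \ge 2$) are preserved so that the subcase analysis is exhaustive; once this bookkeeping is checked, the proof is complete.
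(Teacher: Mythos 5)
Your proof is correct and is essentially the paper's intended argument: the paper gives no separate proof of this lemma, saying only that it goes along the lines of Lemma \ref{cl2lemma2}, and your adaptation (Theorem \ref{bettina}(iii) in place of Theorem \ref{psthm}, the characteristic-$2$ versions of the commutator identities using the normalisations $[a,d]=[b,d]=1$ and $[c,x_i]=1$, and the same four-way case split for the remaining element $[a,b][c,d]$) is exactly that argument. The bookkeeping you defer at the end is routine (a modified $x_i$ that becomes central is a non-generator and can be discarded, and $|G|\ge 2^9$ guarantees enough non-central $x_i$ survive for the last case), and it is glossed over at the same level in the paper's own proof of Lemma \ref{cl2lemma2}.
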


Let $P$ be a $2$-group of breadth $3$ which satisfies Theorem \ref{bettina}(iv) and $\gamma_2(P)$ be elementary abelian of order $16$. Then, by a careful inspection,  it follows from Theorem \ref{bettina}  that $P$ is isoclinic to the group $T$  presented as
\begin{eqnarray}\label{lasteq}
\;\;\;\;T & =  & \langle  v_1, v_2, v_3, v_4, v_5 \mid  [v_4, v_2] = [v_5, v_1] =  1, [v_1, v_2] =  [v_3, v_4]^r, [v_2, v_3] =  [v_3, v_4]^s,\\
& &  [v_3, v_1] = [v_3, v_4]^t, [v_3, v_4]=[v_3, v_5], v_i^{2k_i} = 1, [v_i^2, v_j] =1  \;( 1\le i, j \le 5),\nonumber \\
& & [x, y, z] = 1 \mbox{ for all } x, y, z \in \{v_1, \ldots, v_5\} \rangle, \nonumber
\end{eqnarray}
for some positive integers  $k_i$'s  and  and some $r, s, t \in \mathbb{F}_2$. 

\begin{lemma}\label{lastlemma}
Let $G$ be a  finite $2$-group of breadth $3$ such that  Theorem \ref{bettina}(iv) holds, $\Z(G) \le \gamma_2(G)$ and $\gamma_2(G)$ is elementary abelian of order $2^4$. Then  $G$ is isoclinic to  the group $T$ given by \eqref{lasteq} for some $r, s, t \in \mathbb{F}_2$ and $k_i = 1$, $1 \le  i \le 5$, and  $\K(G) \neq \gamma_2(G) $ if and only if $r=s=t=0$.
\end{lemma}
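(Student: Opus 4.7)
The argument falls into two parts: the isoclinism reduction and the analysis of $\K(G)$ once the presentation is in hand.

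For the isoclinism claim, the hypothesis places $G$ into Theorem \ref{bettina}(iv), so $G = G_0\C_G(G_0)$ centrally with $\C_G(G_0)$ abelian and $G_0$ in one of the five listed classes. The requirement that $\gamma_2(G)$ be elementary abelian of order $2^4$ rules out classes (1)--(4): each of them involves a commutator of order at least $4$ (e.g.\ $[x,y]^4=1$ but $[x,y]^2\neq 1$) or nilpotency class at least $3$, so $\gamma_2$ would contain an element of order $4$ or fail to be abelian. Only class (5) is compatible, and reading off its relations in the notation of \eqref{lasteq} identifies $G$, up to isoclinism, with some $T$ of that form with parameters $r,s,t\in\mathbb{F}_2$. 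The stem property $\Z(G)\le\gamma_2(G)$ lets us adjust the $v_i$ modulo $\Z(T)$ so that $v_i^2=1$, giving $k_i=1$.

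For Part~2, since $T$ is of class $2$ with elementary abelian $\gamma_2(T)$, commutation descends to an alternating $\mathbb{F}_2$-bilinear map on $\bar T=T/\Z(T)\cong\mathbb{F}_2^5$ with basis $\bar v_1,\dots,\bar v_5$, landing in $\gamma_2(T)\cong\mathbb{F}_2^4$ with basis $u_1=[v_1,v_4]$, $u_2=[v_2,v_5]$, $u_3=[v_3,v_4]=[v_3,v_5]$, $u_4=[v_4,v_5]$. Expanding $[x,y]$ for $x=\prod v_i^{a_i}$, $y=\prod v_i^{b_i}$ gives $[x,y]=u_1^{\alpha_1}u_2^{\alpha_2}u_3^{\alpha_3}u_4^{\alpha_4}$ with
\begin{align*}
\alpha_1 &= a_1b_4+a_4b_1,\qquad \alpha_2 = a_2b_5+a_5b_2,\qquad \alpha_4 = a_4b_5+a_5b_4,\\
\alpha_3 &= r(a_1b_2+a_2b_1)+s(a_2b_3+a_3b_2)+t(a_1b_3+a_3b_1)+a_3(b_4+b_5)+b_3(a_4+a_5).
\end{align*}

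If $r=s=t=0$, I would show that $u_1u_2u_3\notin\K(T)$. The constraint $\alpha_4=0$ is equivalent to linear dependence of $(a_4,a_5)$ and $(b_4,b_5)$ in $\mathbb{F}_2^2$; coupled with $\alpha_1=\alpha_2=1$, a short case split (either vector vanishing, or the two coinciding) leaves only $(a_4,a_5)=(b_4,b_5)=(1,1)$, and then $a_4+a_5=b_4+b_5=0$ forces $\alpha_3=0$, contradicting $\alpha_3=1$.

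Conversely, suppose at least one of $r,s,t$ is $1$; by symmetry among the three situations I may assume $r=1$. The extra summand $a_1b_2+a_2b_1$ in $\alpha_3$ is controlled by variables disjoint from those fixed by $\alpha_1,\alpha_2,\alpha_4$, so after setting $a_3=b_3=0$ and running through the possible values of $(\alpha_1,\alpha_2,\alpha_4)$ one can independently adjust $a_1,a_2,b_1,b_2$ to realize any prescribed $\alpha_3$; this establishes $\K(T)=\gamma_2(T)$, and the cases $s=1$ or $t=1$ go through identically. The main obstacle is the first part, namely justifying carefully that classes (1)--(4) of Theorem \ref{bettina}(iv) are all incompatible with the hypotheses and that the stem group reduction lands with $k_i=1$; once this reduction is done, Part~2 is an alternating form computation of modest size.
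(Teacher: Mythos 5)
Your reduction to the class-(5) presentation and your forward direction are essentially the paper's argument: the paper also takes the witness $[v_4,v_1][v_4,v_3][v_5,v_2]=u_1u_2u_3$ and derives the same inconsistent system. One small inaccuracy there: the constraints $\alpha_1=\alpha_2=1$, $\alpha_4=0$ do \emph{not} leave only $(a_4,a_5)=(b_4,b_5)=(1,1)$; the configurations with one of the two vectors equal to $(0,0)$ and the other equal to $(1,1)$ also survive. This is harmless, since in every surviving configuration one still has $a_4+a_5=b_4+b_5=0$ and hence $\alpha_3=0$, so the contradiction goes through; but the intermediate claim should be corrected.

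The genuine gap is in the converse. Your reduction ``by symmetry among the three situations I may assume $r=1$'' is not available: there is a symmetry exchanging $s$ and $t$ (swap $v_1\leftrightarrow v_2$, $v_4\leftrightarrow v_5$), but none exchanging $r$ with $s$ or $t$. Indeed $v_1$ has breadth $1$ in $T_{0,1,0}$ (it commutes with $v_2,v_3,v_5$) but breadth $2$ in $T_{1,0,0}$, so these groups are not even isoclinic, and no change of generators can carry one case to the other. Moreover your actual mechanism breaks down exactly in the cases you wave away: the variables are not disjoint ($a_1,b_1$ enter $\alpha_1$ and $a_2,b_2$ enter $\alpha_2$, and both enter your expression for $\alpha_3$), and, more seriously, if you set $a_3=b_3=0$ then for $r=0$ the coordinate $\alpha_3$ reduces to $s(a_2b_3+a_3b_2)+t(a_1b_3+a_3b_1)+a_3(b_4+b_5)+b_3(a_4+a_5)=0$ identically, so $u_1u_2u_3$ can never be produced this way when $r=0$ and $s=1$ or $t=1$. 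Those cases require witnesses with $a_3$ or $b_3$ nonzero, which is exactly what the paper supplies (for instance $[v_4v_5v_1,\,v_1v_2v_3]$ when $t=1$ and $[v_4v_5v_2,\,v_1v_2v_3]$ when $r=t=0$, $s=1$), after first noting that all elements other than $u_1u_2u_3$ and $u_1u_2u_3u_4$ are commutators for every choice of $r,s,t$. Your computation for $r=1$ is salvageable by a short case check, but as written the converse is only proved for $r=1$; the $r=0$, $s=1$ and $r=0$, $t=1$ cases need a separate argument.
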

\begin{proof}
It is not difficult to see that $G$ is isoclinic to  the group $T$ given by \eqref{lasteq} for $k_i = 1$, $1 \le  i \le 5$ and some $r, s, t \in \mathbb{F}_2$. We now prove the second assertion.
First assume that $r=s=t=0$. We claim that $[v_4, v_1] [v_4,v_3] [v_5, v_2] \notin \K(G)$. Otherwise, there exist $\alpha_i, \beta_i \in \mathbb{F}_2$ such that $$  [v_4, v_1] [v_4,v_3] [v_5, v_2]= [v_4^{\alpha_1} v_5^{\alpha_2} {v_1}^{\alpha_3} {v_2}^{\alpha_4} {v_3}^{\alpha_5} ,  v_4^{\beta_1} v_5^{\beta_2} {v_1}^{\beta_3} {v_2}^{\beta_4} {v_3}^{\beta_5}  ].$$
Expanding right side and comparing the powers of commutators, we get
\begin{eqnarray}
\alpha_1 \beta_2 + \alpha_2 \beta_1 &=& 0, \label{2eqn1}\\
\alpha_1 \beta_3  + \alpha_3 \beta_1 &=& 1, \label{2eqn2}\\
\alpha_2 \beta_4  + \alpha_4 \beta_2   &=& 1, \label{2eqn3}\\
\alpha_1 \beta_5  + \alpha_5 \beta_1 + \alpha_2 \beta_5  + \alpha_5
\beta_2 & = & 1. \label{2eqn4} 
\end{eqnarray}

First assume that $\alpha_1 = 0$. Then \eqref{2eqn2} gives $\alpha_3= \beta_1= 1$, hence by \eqref{2eqn1} we get $\alpha_2 = 0$, which gives $\alpha_4= \beta_2= 1$. But  these values contradict \eqref{2eqn4}. So now assume that $\alpha_1 = 1$. If $\beta_1 = 0$, then by \eqref{2eqn1} and \eqref{2eqn2} we get $\beta_2 = 0$ and $\beta_3 = 1$, respectively.  As $\beta_2 = 0$,  by \eqref{2eqn3} we get $\alpha_2= \beta_4= 1$;  but these values contradict \eqref{2eqn4}. Finally assume that $\alpha_1= \beta_1= 1$. Then by \eqref{2eqn1} and \eqref{2eqn3} we get $\alpha_2= \beta_2 = 1$, which again contradict \eqref{2eqn4}. Hence $[v_4, v_1] [v_4,v_3] [v_5, v_2] \notin \K(G)$. 

Conversely, assume that at least one of $r, s, t$ is non-zero. We'll show that  $\K(G) = \gamma_2(G)$. It is easy to see that except  $[v_4, v_1] [v_4,v_3] [v_5, v_2], \  [v_4, v] [v_4, v_1] [v_4,v_3] [v_5, v_2] $, all elements of $\gamma_2(G)$ lie in $\K(G)$. We'll first show that  $[v_4, v_1] [v_4,v_3] [v_5, v_2] \in \K(G)$.
If  $r=1$, then $[v_1, v_2] =  [v_3, v_4] = [v_4, v_3]$, and therefore 
$$ [v_4, v_1][v_5, v_2][v_4,v_3] =  [v_4, v_1][v_5, v_2][v_1,v_2] = [v_4 v_5 v_1, v_1 v_2 ].$$ 
So let $r=0$. If  $t=1$, then for any value of $s$ we have   $$ [v_4 v_5 v_1, v_1 v_2 v_3 ] = [v_4, v_1][v_4, v_3][v_5, v_2][v_5, v_3][v_1,v_3]=[v_4, v_1] [v_4, v_3] [v_5, v_2].$$
If $t=0$ and $s=1$, then $$ [v_4 v_5 v_2, v_1 v_2 v_3 ] = [v_4, v_1][v_4, v_3][v_5, v_2][v_5,v_3][v_2,v_3]=[v_4, v_1] [v_4,v_3] [v_5, v_2].$$

Now we take  $[v_4, v_5] [v_4, v_1] [v_4,v_3] [v_5, v_2]$. First let $r=1$. Then 
$$ [v_4 v_5 v_1, v_5 v_1 v_2]= [v_4, v_5] [v_4, v_1] [v_5, v_2][v_1, v_2].$$
Next let $r=0$. If $t=1$,   then $$ [ v_4 v_5 v_1, v_5 v_1 v_2 v_3 ]= [v_4, v_5] [v_4, v_1]  [v_4,v_3] [v_5, v_2] [v_5, v_3] [v_1, v_3]= [v_4, v_5] [v_4, v_1] [v_4,v_3] [v_5, v_2]$$
If $t = 0$ and $s=1$, then we finally get $$ [v_4 v_1  v_2, v_5 v_1 v_3 ]= [v_4, v_5] [v_4, v_1 ]  [v_4, v_3] [v_1, v_3] [ v_2, v_5]  = [v_4, v_5] [v_4, v_1] [v_4,v_3] [v_5, v_2].$$
Hence $[v_4, v_5] [v_4, v_1] [v_4,v_3] [v_5, v_2] \in \K(G)$, and the proof of the lemma is now complete.     \hfill $\Box$

\end{proof}

We can now write a proof of Theorem B.

\noindent {\it Proof of Theorem B.} Let $G$ be a finite $2$-groups such that $\gamma_2(G)$ is elementary abelian of order $16$. Also let $\Z(G) \le \gamma_2(G)$.  As in the case of odd primes,  we have  $b(G) \ge 3$ in this case too.   If $b(G) = 4$, then $\K(G) = \gamma_2(G)$. So we assume that $b(G) = 3$. Then it follows from Theorem \ref{keyresult1} that the nilpotency class of $G$ is either $2$ or $3$, and $|G| \ge 2^7$.  If the nilpotency class of $G$ is $2$, then the assertion follows  from Lemmas \ref{28cl2lemma1} - \ref{lastlemma}.  If the nilpotency class of $G$ is $3$,  then the assertion holds from Lemma \ref{cl3lemma}, and the proof is complete. \hfill $\Box$

\section{Examples}

In this section we present various types of examples of groups which occur in our study above for $p \ge 3$. Examples of $2$-groups are evident from GAP computations.

\vspace{.1in}

\noindent {\bf Groups of class $2$.} Let $F$ be the freest $p$-group of nilpotency class $2$ and exponent $p$ on $4$ generators, $a, b, c, d$ (say), where $p$ is an odd prime. Let $R:= \gen{[b,d], [a, d]}$. Then $G := F/R$ is a  group of nilpotency class $2$ and order $p^8$ such that $\K(G) \ne \gamma_2(G)$. If we take $R_1 := \gen{ [a,b][c,d],\; [a,c][b,d]^r}$, where $r$ is any fixed non-square integer modulo p. Then it follows from \cite[Theorem 1.2]{NY18} that $G_1:=F/R_1$ is a group of order $p^8$, nilpotency class $2$ and conjugate type $\{1, p^3\}$. For $p$-groups $G$, $p$ odd, of nilpotency class $2$ and order at least $p^9$, we know that $\K(G) = \gamma_2(G)$. Such examples of order $\ge p^{10}$ can be constructed by taking a central product of the group $G:=F/R$ and any finite extraspecial $p$-group $K$ amalgamating $\gen{[\bar c, \bar d]} = \gamma_2(K)$. Constructing such examples of order $p^9$ is also easy, as explained in the proof of Lemma \ref{cl2lemma2}.

\vspace{.1in}

\noindent {\bf Groups of class $3$.} We present five types of $p$-group of nilpotency class $3$ and order $p^7$, where $p$ is an odd prime. Consider the group presented as
\begin{eqnarray*}
 G &=& \Big\langle  \alpha_1,  \alpha_2, \alpha_3, \alpha_4, \alpha_5,
\alpha_6, \gamma \mid [\alpha_2,  \alpha_1]=\alpha_4, [\alpha_3, 
\alpha_1] =\alpha_5, [\alpha_3,  \alpha_2]=\alpha_6,\\
 & & \;\ [\alpha_4, \alpha_2]=\gamma, [\alpha_5,  \alpha_3]=\gamma, [\alpha_5,  \alpha_2]=\gamma,
[\alpha_6,  \alpha_1]=\gamma, {\alpha_1}^p=\gamma,\\
 & & \;\ \alpha{_i}^p=\gamma^p=1~ (2 \le i \le 6) \Big\rangle.
\end{eqnarray*}
Notice that  $|\Z(G)| = p$ and $\K(G) = \gamma_2(G)$.

The following group $G$ is such that  $|Z(G)|=p^2=|\gamma_3(G)|$ and $\K(G) = \gamma_2(G)$.
\begin{eqnarray*}
 G &=& \Big\langle  \alpha_1,  \alpha_2, \alpha_3, \alpha_4, \alpha_5,
\alpha_6, \gamma \mid [\alpha{_i},  \alpha_1]=\alpha{_{i+2}}, [\alpha_5, 
\alpha_1]=\gamma,\\
 & & \;\ \alpha{_1}^p=\alpha{_i}^p=\alpha{_5}^p=\alpha{_6}^p=\gamma^p=1~
(2 \le i \le 4) \Big\rangle.
\end{eqnarray*}

The next group $G$ is such that  $|Z(G)|=p^2$, $|\gamma_3(G)|=p$ and $\K(G) = \gamma_2(G)$.
\begin{eqnarray*}
 G &=& \Big\langle  \alpha_1,  \alpha_2, \alpha_3, \alpha_4, \alpha_5,
\alpha_6, \gamma \mid [\alpha_2,  \alpha_1]=\alpha_4, [\alpha_3, 
\alpha_1]=\alpha_5, [\alpha_3,  \alpha_2]=\alpha_6,\\
 & & \;\ [\alpha_4, \alpha_2] = \gamma, [\alpha_5,  \alpha_3]=\gamma,  {\alpha_1}^p=\gamma,
\alpha{_i}^p=\gamma^p=1~ (2 \le i \le 6) \Big\rangle.
\end{eqnarray*}

We now present a group $G$ such that $|Z(G)|=p^3$, $|\gamma_3(G)|=p$ and $\K(G) \ne \gamma_2(G)$.
\begin{eqnarray*}
 G &=& \Big\langle  \alpha_1,  \alpha_2, \alpha_3, \alpha_4, \alpha_5,
\alpha_6, \gamma \mid [\alpha_2,  \alpha_1]=\alpha_4, [\alpha_3, 
\alpha_1]=\alpha_5, \\
 & & \;\  [\alpha_3,  \alpha_2]=\alpha_6, [\alpha_4, \alpha_1]=\gamma,  \alpha{_i}^p=\gamma^p=1~ (1 \le i \le6) \Big\rangle.
\end{eqnarray*}

 Finally we present a group $G$ such that $|Z(G)|=p^3$, $|\gamma_3(G)|=p^2$ and $\K(G) \ne \gamma_2(G)$.
\begin{eqnarray*}
 G &=& \Big\langle  \alpha_1,  \alpha_2, \alpha_3, \alpha_4, \alpha_5,
\alpha_6, \gamma \mid [\alpha_2,  \alpha_1]=\alpha_4, [\alpha_3, 
\alpha_1]=\alpha_5, \\
 & & \;\   [\alpha_4,  \alpha_1]=\alpha_6, [\alpha_4, \alpha_2] =\gamma, \alpha{_i}^p=\gamma^p=1~ (1 \le i \le 6) \Big\rangle.
\end{eqnarray*}

\vspace{.1in}

\noindent {\bf Groups of class $4$.} We present two types of $p$-groups of nilpotency class $4$ and order $p^7$, where $p$ is an odd prime. Consider the group presented as
   \begin{eqnarray*}
 G &=& \Big\langle  \alpha_1,  \alpha_2, \alpha_3, \alpha_4, \alpha_5,
\alpha_6, \gamma \mid [\alpha_2,  \alpha_1]=\alpha_4, [\alpha_4, \alpha_1]=\alpha_5, [\alpha_4,  \alpha_2]=\alpha_6, \\
 & & \;\ [\alpha_4,  \alpha_3]= [\alpha_5,  \alpha_1]= [\alpha_6,  \alpha_2]=\gamma,  [\alpha_3, 
\alpha_1]=\gamma, \alpha{_i}^p=\gamma^p=1~ (2 \le i \le 6) \Big\rangle.
\end{eqnarray*}
For this group  $|Z(G)|=p$ and $\K(G) = \gamma_2(G)$.

The following is a group $G$ such that $|Z(G)|=p^2$ and $\K(G) = \gamma_2(G)$.
   \begin{eqnarray*}
 G &=& \Big\langle  \alpha_1,  \alpha_2, \alpha_3, \alpha_4, \alpha_5,
\alpha_6, \gamma \mid [\alpha_2,  \alpha_1]=\alpha_4, [\alpha_4, 
\alpha_1]=\alpha_5, [\alpha_4,  \alpha_2]=\alpha_6, \\
 & & \;\ [\alpha_3, \alpha_2] = [\alpha_5,  \alpha_1]= \gamma,  {\alpha_1}^p=\gamma,
\alpha{_i}^p=\gamma^p=1~ (2 \le i \le 6) \Big\rangle.
\end{eqnarray*}

We conclude with the remark that there are total $159$ groups $G$ of order $5^7$ such that $\gamma_2(G)$ is elementary abelian of order $5^5$, and $\K(G) \neq \gamma_2(G)$ for $141$ such  groups.

\end{document}